\newtheorem{thm}{Theorem}[section]
\newtheorem{lem}[thm]{Lemma}
\newtheorem{prop}[thm]{Proposition}
\newtheorem{defn}[thm]{Definition}
\theoremstyle{definition}
\newtheorem{eg}[thm]{Example}
\newtheorem{obs}[thm]{Remark}
\newcommand{\tnum}{\rm(\roman*)}
\newcommand{\rnum}{\rm(\alph*)}
\newcommand{\nnum}{\rm(\arabic*)}
\newcommand{\ord}{{\rm{ord}_{\mathcal{S}}}}
\newcommand{\succup}{%
      \rotatebox{90}{$\succ$}}
\newcommand{\succdn}{%
      \rotatebox{-90}{$\succ$}}
\numberwithin{equation}{section}
\def\k0{\kappa_0}
\def\tA{\tilde{A}}
\def\tB{\tilde{B}}
\def\tu{\tilde{u}}
\def\tv{\tilde{v}}
\def\tf{\tilde{f}}
\def\tg{\tilde{g}}
\def\tbx{\tilde{\bx}}
\def\tt{\tilde{t}}
\def\bu{{\bf{u}}}
\def\bB{{\bf{B}}}
\def\bF{{\bf{f}}}
\def\bg{{\bf{g}}}
\def\bk{{\bf{k}}}
\def\bj{{\bf{j}}}
\def\bx{{\bf{x}}}
\def\b0{{\bf{0}}}
\def\Bs{B_{\rm s}}
\begin{document}
\title[NSE Galerkin with Large Grashof Numbers]
{On Galerkin approximations of the Navier--Stokes equations in the limit of large Grashof numbers}
\author[C. Foias]{Ciprian Foias}
\author[L. Hoang]{Luan Hoang$^{1}$}
\address{$^1$Department of Mathematics and Statistics,
Texas Tech University\\
1108 Memorial Circle, Lubbock, TX 79409--1042, U. S. A.}
%\email[L. Hoang]{luan.hoang@ttu.edu}
\email{luan.hoang@ttu.edu}

\author[M. S. Jolly]{Michael S. Jolly$^2$}
\address{$^2$Department of Mathematics\\
Indiana University\\ Bloomington, IN 47405, U. S. A.}
%\email[M. S. Jolly]{msjolly@indiana.edu}
\email{msjolly@indiana.edu}

%\thanks{This work was supported in part by NSF grant number DMS-0511533}

\date{\today}
\subjclass[2020]{35Q30, 76D05, 76F02, 35C20, 41A60}
%Codes: 35Q30 NSE, 76D05 incompressible NSE, 76F02 Fundamentals of turbulence, 35C20 Asymptotic expansions of solutions to PDEs, 41A60 Asymptotic approximations, asymptotic expansions 
\keywords{Navier--Stokes equations, Galerkin approximation, large Grashof number, steady states, asymptotic expansion}

\begin{abstract}
We examine how stationary solutions to Galerkin approximations of the Navier--Stokes equations behave in the limit as the Grashof number $G$ tends to $\infty$.  An appropriate scaling is used to place the Grashof number as a new coefficient of the nonlinear term, while the body force is fixed. A new type of asymptotic expansion, as $G\to\infty$, for a family of solutions is introduced. Relations among the terms in the expansion are obtained by following a procedure that compares and totally orders positive sequences generated by the expansion. The same methodology applies to the case of perturbed body forces and similar results are obtained. We demonstrate with a class of forces and solutions that have convergent asymptotic expansions in $G$. All the results hold in both two and three dimensions, as well as for both no-slip and periodic boundary conditions.
\end{abstract}

\maketitle

%\vfill
%\eject
\tableofcontents

%\vfill
%\eject

\section{Introduction} 

 Turbulent behavior of solutions to the Navier--Stokes equations (NSE) is generally expected for very large Grashof numbers $G$ (see definition \eqref{Grashof}) \cite{F97}.  This is preceded by progressively complicated behavior typically observed as $G$ increases (for an exception see \cite{Marchioro}).  In general, the unique, asymptotically stable steady state at very small values of $G$ bifurcates.   Depending on the particular body force, this produces multiple steady states and/or periodic solutions which in turn, often period double, breaking symmetries and leading to dynamics of increasing complexity \cite{Frisch}.  That long time behavior is contained in the global attractor $\mathcal{A}$ whose fractal dimension, $\text{dim}_{\text{F}}(\mathcal A)$, is bounded by $cG$, ($cG^{2/3}(\log G)^{1/3}$ for periodic boundary conditions) for some constant $c$ \cite{T97}.

The Galerkin approximation method plays an important role in the study of the NSE.  It is used both to prove existence of solutions and as an approach to numerical simulations. 
As $G$ increases, one can expect to need more terms in the Galerkin basis (at least comparable to $\text{dim}_{\text{F}}(\mathcal A)$) to obtain accurate numerical results.  Nevertheless, in practice, one often seeks insight in the behavior of solutions for large $G$ for a fixed Galerkin approximation.  

In this paper we study the limit as $G_n\to \infty$ for sequences of steady state solutions $(v_n)_{n=1}^\infty$ of a fixed Galerkin approximation of the NSE. The results apply to both periodic and no-slip boundary conditions in both two and three dimensions. (For certain properties of the set of steady state solutions of the NSE, see \cite{FT76,FT77,FT78,FSequil83,SaTe80}.) This is done through a new type of expansion in the form
\begin{equation*}
v_n=v+\Gamma_{1,n}w_1+\Gamma_{2,n}w_2+\cdots+
\Gamma_{k,n}w_k+\cdots,
\end{equation*}
where $\Gamma_{k,n}$'s are positive numbers converging to zero as $n\to\infty$ and $w_k$'s are unit vectors.
We show that any convergent sequence has a subsequence with such an expansion.
The payoff is a collection of results that prescribe particular algebraic relations between the limit, $v$, and some of the fixed vectors $w_k$ in the expansion.  Since no assumptions are made on the body force, there are a multiple, yet finite number of possible cases, independent of the number of modes in the Galerkin approximation.  Which case applies for a particular force can be suggested by comparing combinations of $\Gamma_{k,n}$ with $G_n$, as we demonstrate with a numerical simulation.

We now describe the progression of the paper.
In Section \ref{prelim}, after recalling basic facts about the NSE, we rescale it, see \eqref{changevar}, \eqref{Grashof} and \eqref{subs}, to equation \eqref{sc} with the body force having a fixed $L^2$-norm.
We state the main results in Section \ref{results}. Their proofs are left to Sections \ref{rigor} and \ref{mainproofs}. The starting point is the asymptotic expansion \eqref{infser} which leads to the formal equation \eqref{sssub}.
The idea is to order the sequences of coefficients in \eqref{sssub}, as $n\to\infty$. Such an order is introduced in Definition \ref{deford}. The rigorous treatment requires Lemma \ref{total}, Proposition \ref{Sprop} and a set of equivalence classes 
which finally  lead to Definition \ref{ordef}. The main idea is summarized in the General Procedure \eqref{GP}. Our main results concerning the asymptotic expansion \eqref{infser} for the NSE are Theorem \ref{thm1} and Propositions \ref{v0eg}, \ref{vAinvg} in subsection \ref{subNSE}. 

To make rigorous the comparison of terms in different sequences requires some general results, independent of the NSE. 
In Section \ref{rigor}, a new notion of asymptotic expansions - called strict unitary expansions - is defined in Definition \ref{uexp}. It, in fact, is inspired by Lemma \ref{mainlem} which proves that any bounded sequence in a finite dimensional normed space has a subsequence that possesses such a strict unitary expansion. The proof of that lemma involves taking convergent subsequences on the unit sphere, which is where the finite dimensionality of  Galerkin approximations is needed. Numerical results are presented in subsection \ref{subnum} for the 3D, periodic case, which demonstrate both the construction of the expansions as well as a particular case of Theorem \ref{thm1}.
The strict unitary expansions turn out to be ``asymptotically unique" as stated and proved in Proposition \ref{unique}. 
Subsection \ref{sueeg} contains some examples which show that the strict unitary expansion can be very simple in $\mathbb R$, but can also be quite different from the Taylor expansion for the same sequence. 
With these facts at hand, we present in Section \ref{mainproofs} the 
proofs of the main results that were stated in Section \ref{results} .  

We then apply these ideas to the case of a perturbed force, $g_n$,
in Section \ref{expandsec}. The main result is Theorem \ref{gnthm}, which relates expansions of both $g_n$ and the solution $v_n$ in \eqref{genevg} with the Grashof number $G_n$. Theorem \ref{nonzthm} and Remark \ref{matrmk} show that in certain cases the procedure \eqref{GP} in Section \ref{results} works for equation \eqref{eqforce}. Theorem \ref{vzthm2} and Remark \ref{gcompare} show that a different asymptotic expansion of $g_n$ may lead to an unexpected expansion of the solution $v_n$.
This section may be useful in interpreting numerical results or detecting when they are false.
The reason is that the solutions solved numerically for the case of a fixed force $g$ are, inevitably, inexact.   Yet they can be viewed as satisfying the Galerkin approximation with a different force $g_n$, close to $g$.
Some of the more technical proofs are gathered in the Appendix.

Finally, it is worth mentioning that the new type of asymptotic expansion in this paper can be applied to other nonlinear ordinary/partial differential equations. In particular, the Kuramoto--Sivashinsky, Rayleigh--B\'enard, and magnetohydrodynamic equations can all be expressed in the same the functional form \eqref{NSE}, for certain linear operators $A$ and bilinear operators $B$ \cite{T97}.

\section{Preliminaries}\label{prelim}

The incompressible NSE for velocity $\bu(\bx,t)$, pressure $p(\bx,t)$ with a time-independent body force $\bF(\bx)$, for the spatial variables $\bx\in\mathbb R^d$ ($d=2,3$) and time variable $t\ge 0$, are
\begin{equation}\label{NSE}
\left\{ 
\begin{aligned}
&\frac{\partial \bu}{\partial t} -
\nu \Delta \bu  + (\bu\cdot\nabla)\bu + \nabla p = \bF , \\
&\text {div} \bu = 0,
\end{aligned}
\right.
\end{equation}
where $\nu>0$ is the kinematic viscosity.

\medskip
\noindent
\textbf{Periodic boundary condition.} We consider the NSE with periodic boundary conditions in $\Omega =[0,L]^d$ for some $L>0$.
Let $\mathcal V$ denote the set of $\mathbb R^d$-valued $\Omega$-periodic divergence-free trigonometric polynomials with zero average over $\Omega$.
Define the spaces $H=$closure of $\mathcal V$ in $L^2(\Omega)^d$, $V=$closure of $\mathcal V$ in $H^1(\Omega)^d$. 

\medskip
\noindent\textbf{No-slip boundary condition.} 
Consider the NSE in an open, connected, bounded set $\Omega$ in $\mathbb R^d$ with $C^2$-boundary $\partial \Omega$. Let $\mathbf n$ denote the outward normal vector on the boundary. 
Under the no-slip boundary condition 
$$\bu =0\text{ on }\partial \Omega,$$
the relevant spaces are
\begin{align}
H=\{\bu\in L^2(\Omega)^d:\ \nabla \cdot \bu=0, \quad \bu\cdot\mathbf n|_{\partial \Omega}=0\},\
V=\{\bu\in H_0^1(\Omega)^d:\ \nabla \cdot \bu=0 \}.
\end{align}

\medskip
The  scalar product and its associated norm in $H$ are those of $L^2(\Omega)^d$ and are denoted by $\langle\cdot,\cdot\rangle$ and $|\cdot|$, respectively.
(We also use $|\cdot|$ for the modulus of a vector in 
$\mathbb{C}^d$; we assume that the 
meaning will be clear from the context.)

Let $\mathcal{P}$ be the Helmholtz-Leray projection, 
that is, the orthogonal projection in $L^2(\Omega)^d$ onto $H$. 
The Stokes operator is $A=-\mathcal P\Delta$ defined on  $D(A):=V\cap H^2(\Omega)^d$ which is  called the domain of $A$. The domain of $A^{1/2}$ is  $D(A^{1/2})=V$, and the natural 
norm on $V$ is
\[
\|u\|=|A^{1/2}u|=\left(\int_{\Omega}\sum_{j=1}^{d} 
\frac{\partial}{\partial x_j}u(\bx)\cdot 
\frac{\partial}{\partial x_j}u(\bx)  d\bx\right)^{1/2}.
\]
The bilinear operator $B$ is defined as 
\[
B(u,v)=\mathcal{P}\left( (u \cdot \nabla) v \right),\text{ for } u,v\in D(A).
\]

Assume $\bF\in H$.  The NSE \eqref{NSE} can be written as
a differential equation in an appropriate functional  space 
(see \cite{CF88} or \cite{T97}), 
\begin{equation}\label{fNSE}
\frac{du}{dt} + \nu Au + B(u,u) = f ,
\end{equation}
where $u(t)=\bu(\cdot,t)$ and $f=\bF(\cdot)$.  

The operator $A$ is positive, self-adjoint with a compact inverse, 
and its eigenvalues are positive numbers
$$\lambda_1\le \lambda_2\le \lambda_3\le \ldots \text{ satisfying } \lim_{n\to\infty}\lambda_n=\infty.$$
Moreover, each eigenspace of $A$ is finite dimensional.
The bilinear term enjoys the   orthogonality relations 
(see for instance \cite{T97})
\begin{equation}\label{Bflip}
\langle B(u,v),w\rangle = -\langle B(u,w),v\rangle,\quad  \text{so that} \quad 
\langle B(u,v),v\rangle=0\text{ for } u,v,w\in D(A).
\end{equation}

We denote 
$\kappa_0=\lambda_1^{1/2}$
and make the following  change of variables
\begin{equation}\label{changevar}
\begin{aligned}
\bu(\bx,t)&=\nu\k0\tilde \bu(\k0 \bx,\nu\k0^2 t)=\nu\k0\tilde\bu(\tbx,\tt),\\
p(\bx,t)&=\nu^2\k0^2\tilde p(\k0 \bx,\nu\k0^2 t)=\nu^2\k0^2\tilde p(\tbx,\tt),\\
\bF(\bx)&=\nu^2\k0^3\tilde \bF(\k0 \bx)=\nu^2\k0^3\tilde\bF(\tbx),
\end{aligned}
\end{equation}
to obtain
\begin{equation}\label{changesys}
\left\{ 
\begin{aligned}
&\frac{\partial \tilde\bu}{\partial \tilde t} - \Delta_{\tbx} \tilde\bu  + (\tilde\bu\cdot\nabla_{\tbx})\tilde\bu + \nabla_{\tbx}\tilde p = \tilde\bF,\\
&\text {div}_{\tbx} \tilde\bu = 0,
\end{aligned}
\right.
\end{equation}
for the new domain $\tilde \Omega=\k0 \Omega$. One can verify that the corresponding boundary conditions for $\tilde \Omega$ are also satisfied.
Similar to \eqref{fNSE} we can rewrite \eqref{changesys} as 
\begin{equation}\label{cov}
\frac{d\tu}{d\tt}+ \tA\tu+ \tB(\tu,\tu)=\tf,
\end{equation}  
where $\tilde u(\tt)=\tilde \bu(\cdot,\tt)$ and $ \tf=\tilde \bF(\cdot)$.

The corresponding Stokes operator is $\tilde A$ which has positive, strictly increasing eigenvalues $\tilde\lambda_n$ that go to infinity as $n\to\infty$.

For the periodic boundary condition, $A=-\Delta $ on $D(A)$ and the eigenvalues of $A$ are of the form
\[
\left(\frac{2\pi}{L}\right)^2\bk\cdot \bk, 
\quad \mbox{where} \ \bk \in \mathbb{Z}^d \setminus \{ 0\}.
\]
Therefore, $\lambda_1=(2\pi /L)^2$, $\kappa_0=2\pi/L$, $\tilde \Omega=[0,2\pi]^d$ and $\tilde\lambda_1=1$.

For the no-slip boundary condition, if $\lambda$ is engeinvalue of $A$ then\begin{equation}\label{eigen1}
     - \Delta \bu  =\lambda \bu - \nabla q , \quad  \text {div} \bu = 0,\quad \bu|_{\partial\Omega}=0, \text{ for }\bu=\bu(\bx)\ne 0.
\end{equation}
Similar to \eqref{changevar}, we make the following change of variables $\tbx=\k0 \bx$, $\bu(\bx)=\tilde\bu(\tbx)$ and $q(\bx)=\k0\tilde q(\tbx)$.
Then the eigenvalue problem \eqref{eigen1} is equivalent to $$ -  \Delta_{\tbx} \tilde \bu  =\lambda \kappa_0^{-2} \tilde \bu -  \nabla_{\tbx} \tilde q , \quad  \text {div}_{\tbx} \tilde \bu = 0, \quad \tilde\bu|_{\partial\tilde\Omega}=0, \text{ for }\tilde \bu\ne0,$$
which is the eigenvalue problem for $\tilde A$.
Thus $\tilde \lambda_1=\lambda_1\kappa_0^{-2}=1$.

\medskip
The dimensionless Grashof number is given by
\begin{equation}\label{Grashof}
G=\|\tf\|_{L^2(\tilde\Omega)^d}
=\left(\int_{\tilde\Omega} |\tilde\bF(\tbx)|^2 d\tbx\right)^{1/2}
=\frac{1}{\nu^2\k0^{3-d/2}}\left(\int_{\Omega} |\bF(\bx)|^2 d \bx\right)^{1/2}.
\end{equation}

We set
\begin{equation}\label{subs}
\tg=\tf/G\quad \text{and} \quad \tv=\tu/G. 
\end{equation}
Dividing \eqref{cov} by $G$, using the bilinearity of  
$\tB$, and suppressing the tildes, we have 
\begin{equation}\label{sc}
\frac{dv}{dt}+Av+ GB(v,v) = g , \quad  v \in  H.
\end{equation}  

Note that $g$ in \eqref{sc} satisfies $\|g\|_{L^2(\Omega)^d}=1$. Also, the Stokes operator $A$ in \eqref{sc} has the first eigenvalue $\lambda_1 = 1$, which implies
\begin{equation}\label{AVuineq}
    |Au|\ge \|u\|\ge |u| \text{ for all }u\in D(A).
\end{equation}

\medskip
\noindent\textbf{The Galerkin approximation.} Let $(\varphi_k)_{k=1}^\infty$ be an orthonormal basis of $H$ for which each $\varphi_k$ is an eigenfunction of $A$. (Such a basis exists thanks to the standard theory of the Stokes operator.)
Define $\Pi_N$ to be the orthogonal projection in $H$ onto the subspace span$\{\varphi_1,\varphi_2,\ldots,\varphi_N\}$.
The Galerkin 
approximation of \eqref{sc} is 
\begin{equation}\label{rescaled}
\frac{dv}{dt}+A v+ G\Pi_N B(v,v) = \Pi_N g , \quad  v \in
\Pi_N H.
\end{equation}

Let $\mathcal H=\Pi_N H$. Then $\mathcal H$ is a finite dimensional Hilbert space with the scalar  product and norm inherited from $H$. 
The corresponding steady state equation  for \eqref{rescaled}, after suppressing 
projector $\Pi_N$, is
\begin{equation}\label{steadyG}
Av+ G B(v,v)=g,
\end{equation}
where $g,v\in\mathcal H$, and we re-denoted
\begin{equation}\label{Bbar}
B(u,v)=\Pi_N \mathcal P ((u\cdot\nabla) v)\text{ for }u,v\in \mathcal H.    
\end{equation} 

We will examine the behavior of solutions of \eqref{steadyG} for 
Grashof number $G \to \infty$.

In the presentation below, for convenience, we denote
$$\Bs(u,v)=B(u,v)+B(v,u).$$

Similar to \eqref{Bflip}, we have for $B$ defined in \eqref{Bbar} that
\begin{equation}\label{flip}
\langle B(u,v),w\rangle = -\langle B(u,w),v\rangle,\quad  \text{and} \quad 
\langle B(u,v),v\rangle=0\text{ for } u,v,w\in \mathcal H.
\end{equation}

Very often, $\Pi_N=\bar P_\Lambda$ for some eigenvalue $\Lambda$ of $A$. 
Here, even for any number $\Lambda\ge \lambda_1$, $\bar P_\Lambda$ is  the orthogonal projection in $H$ onto the sum of all eigenspaces of $A$ corresponding to the eigenvalues that are less than or equal to $\Lambda$.

For the periodic boundary condition, we may express an element in $H$ as a Fourier series
\begin{equation*}%\label{Fourier}
u=\sum_{\bk\in \mathbb{Z}^d}\hat \bu_{\bk} e^{i \bk\cdot \bx},
\text{ where } 
\hat \bu_{\bk}\in\mathbb C^d,\ 
\hat \bu_0=0,\ 
\overline{\hat \bu_\bk}=\hat \bu_{-\bk},\ 
\bk \cdot \hat \bu_\bk=0,\ 
\sum_{\bk\in \mathbb{Z}^d}|\hat \bu_{\bk}|^2 <\infty. 
\end{equation*}
Then we explicitly have$$\bar P_\Lambda u=\sum_{\bk\in \mathbb{Z}^d,|\bk|^2\le \Lambda}\hat u_{\bk} e^{i \bk\cdot \bx}.$$

Denote by $\{\mathbf e_1,\mathbf e_2,\mathbf e_3\}$ the standard canonical basis of $\mathbb C^3$ and $\mathbb R^3$.

\section{Main results} \label{results}

Assume that the Grashof number $G$ takes the positive values $\alpha_n$, for $n\ge 1$, with $\alpha_n\to\infty$ as $n\to\infty$. For $n\ge 1$, let $v_n$ be a solution of  the corresponding steady state equation \eqref{steadyG}, i.e., 
\begin{equation}\label{steady}
Av_n+ \alpha_n B(v_n,v_n)=g.
\end{equation}
Throughout this section, the function $g$ is a given, fixed element in $\mathcal H\setminus\{0\}$.
In  general, the $H$-norm $|g|$ needs not be $1$, and hence, in that case, the real Grashof number is $\alpha_n|g|$.

Utilizing \eqref{AVuineq} and \eqref{flip}, we derive from \eqref{steady} that $|v_n|\le |g|$.
Hence, $(v_n)_{n=1}^\infty$ is a bounded sequence in the finite dimensional space $\mathcal H$.
We consider a finite or infinite asymptotic expansion, as $n\to\infty$, of the sequence 
$(v_n)_{n=1}^\infty$ of the form
\begin{equation}\label{infser}
v_n=v+\Gamma_{1,n}w_1+\Gamma_{2,n}w_2+\cdots+
\Gamma_{k,n}w_k+\cdots
\end{equation}
where, for all $k$
\begin{equation}\label{b11}
\lim_{n\to\infty} \Gamma_{k,n}=0,\quad     \lim_{n\to\infty} \frac{\Gamma_{k+1,n}}{\Gamma_{k,n}}=0, \quad \text{and} \quad |w_k| =1 . 
\end{equation}

In Section \ref{rigor}, such an expansion \eqref{infser} is defined precisely in Definition \ref{uexp} and is proved to hold for a subsequence of $v_n$, which we can still denote by $v_n$ itself (see Lemma \ref{mainlem}).
Substituting the expansion \eqref{infser} of $v_n$  into equation \eqref{steady}, one formally has
\begin{equation}\label{sssub}
\begin{aligned}
&\alpha_n B(v,v)+ (Av-g) +\Gamma_{1,n}Aw_1+ \cdots +
\Gamma_{k,n}Aw_k + \cdots \\
&+\alpha_n\Gamma_{1,n}\Bs(v,w_1) + \cdots +
\alpha_n\Gamma_{k,n}\Bs(v,w_k) + \cdots \\
&+\alpha_n\Gamma_{1,n}\Gamma_{1,n}B(w_1,w_1)
+ \cdots + \sum_{j=1}^k
  \alpha_n\Gamma_{j,n}\Gamma_{k-j,n}
B(w_j,w_{k-j}) + \cdots = 0 .
\end{aligned}
\end{equation}
Considering the sequences of coefficients in \eqref{sssub}, we denote
\begin{equation}\label{coeffdef}
\begin{array}{lll}
\sigma_0=(1)_{n=1}^{\infty},&
\sigma_k=(\Gamma_{k,n})_{n=1}^{\infty}&\text{for } k=1,2, \dots,
\\
\sigma_{0,0}= (\alpha_{n})_{n=1}^{\infty},&
\sigma_{0,k}= (\alpha_{n}\Gamma_{k,n})_{n=1}^{\infty} 
 & \text{for } k=1,2, \dots,\\
& \sigma_{j,k}= (\alpha_{n}\Gamma_{j,n}\Gamma_{k,n})_{n=1}^{\infty} &
 \text{for } j=1,2, \dots, \ k \ge j.
\end{array}
\end{equation}

The next idea is to compare the sequences in \eqref{coeffdef} as $n\to\infty$.

\begin{defn} \label{deford} 
Let $\mathcal X$ be the collection of all sequences of positive numbers. 
Given two sequences $\xi=(\xi_n)_{n=1}^{\infty}$, 
$\eta=(\eta_n)_{n=1}^{\infty}$ in $\mathcal X$, we
write  $\xi \succ\eta$, if $\xi_n/\eta_n \to \infty$ as $n \to
\infty$, and  $\xi \sim \eta$ if  $\xi_n/\eta_n \to \lambda$, for some
$\lambda \in (0,\infty)$.  We write $\xi\succsim \eta$ if either  $\xi\succ\eta$ or $\xi \sim\eta$. 

A subset $X$ of $\mathcal X$ is called \emph{totally comparable} if it holds for any $\xi,\eta\in X$ that $\xi\sim \eta$ or $\xi\succ \eta$ or $\eta\succ \xi$.
\end{defn}
Clearly, the relation $\sim $ in Definition \ref{deford} is an equivalence relation on $\mathcal X$ but $\succsim$ is not an order on $\mathcal X$.

Since $\alpha_n \to \infty$, $\Gamma_{k,n} \to
0$ as $n \to \infty$, and $(\Gamma_{k,n})_{n=1}^\infty\succ (\Gamma_{k+1,n})_{n=1}^\infty$ we have the following relations between 
the sequences in \eqref{coeffdef}
\begin{equation}\label{seqarray}
\begin{array}{cccccccccc}
\sigma_0    &\succ &\sigma_1 &\succ &\sigma_2 &\succ \cdots  &\succ
 &\sigma_k &\succ \cdots  \\
\succup & & \succup & & \succup & & & \succup\\
\sigma_{0,0}&\succ &\sigma_{0,1} &\succ &\sigma_{0,2} &\succ \cdots  &\succ
&\sigma_{0,k} &\succ \cdots \\
& & \succdn & & \succdn & & & \succdn \\
            & &    \sigma_{1,1} &\succ &\sigma_{1,2} &\succ \cdots &\succ
&\sigma_{1,k} &\succ \cdots \\
& &  & & \succdn & & & \succdn \\
 &  & & & \sigma_{2,2} &\succ \cdots  &\succ &\sigma_{2,k} &\succ \cdots \\
& &  & &  & & & \succdn \\
& & & & & \ddots   & & \vdots \\
& &  & &  & & & \succdn \\
 &  & & & &  & & \sigma_{k,k} &\succ \cdots 
\end{array}
\end{equation}

Let $\mathcal S$ denote the set of sequences in \eqref{coeffdef}. 
Then the set $\mathcal S$ may not be totally comparable.
However, by going through a diagonal process extracting successive subsequences, we may assume $\mathcal S$ actually is.
We express this rigorously in the following.

Let $(\varphi(n))_{n=1}^\infty$ be a subsequence of of $(n)_{n=1}^\infty$.
For $X\subset \mathcal X$, define
$$X_\varphi=\left \{ (x_{\varphi(n)})_{n=1}^\infty \text{ with } (x_n)_{n=1}^\infty \in X\right \}.$$ 
We call $X_\varphi$ a \emph{subsequential set} of $X$.
Note, for $(x_n)_{n=1}^\infty,(y_n)_{n=1}^\infty\in \mathcal X$, that if 
$(x_n)_{n=1}^\infty\succ  (y_n)_{n=1}^\infty$, respectively, $(x_n)_{n=1}^\infty\sim (y_n)_{n=1}^\infty$, then, clearly, 
$$(x_{\varphi(n)})_{n=1}^\infty\succ (y_{\varphi(n)})_{n=1}^\infty,\text{ respectively, }
(x_{\varphi(n)})_{n=1}^\infty \sim (y_{\varphi(n)})_{n=1}^\infty.$$
Thus, if $X$ is totally comparable, then so is the set $X_\varphi$.

We prove the following basic fact in the Appendix.
\begin{lem}\label{total}
If $X$ is a countable subset of $\mathcal X$, then it has a totally comparable subsequential set.
\end{lem}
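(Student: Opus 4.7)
The plan is a standard Cantor-style diagonal extraction. Enumerate $X = \{\xi^{(k)}\}_{k=1}^\infty$, where each $\xi^{(k)} = (\xi^{(k)}_n)_{n=1}^\infty$ is a sequence of positive numbers. The goal is to produce a single strictly increasing map $\varphi \colon \mathbb{N} \to \mathbb{N}$ such that for every pair of indices $i, j$ the ratio $\xi^{(i)}_{\varphi(n)}/\xi^{(j)}_{\varphi(n)}$ has a limit in $[0, \infty]$ as $n \to \infty$. Once this is achieved, a trichotomy in Definition \ref{deford} (according to whether the limit lies in $(0, \infty)$, equals $\infty$, or equals $0$, giving respectively $\xi^{(i)} \sim \xi^{(j)}$, $\xi^{(i)} \succ \xi^{(j)}$, or $\xi^{(j)} \succ \xi^{(i)}$ along $\varphi$) shows that $X_\varphi$ is totally comparable.

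The key elementary fact is that for any two sequences of positive numbers $\xi, \eta \in \mathcal X$, the ratio $(\xi_n/\eta_n)_n$ takes values in $(0, \infty)$ and, by compactness of the extended interval $[0, \infty]$, admits a subsequence converging in $[0, \infty]$. This lets us handle one pair of indices at a time, and the full result is then reached by iterating over countably many pairs.

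Construct nested subsequences inductively. Let $\varphi_0$ be the identity on $\mathbb N$. At stage $k \ge 1$, suppose $\varphi_{k-1}$ has already been chosen so that for all $1 \le i < j \le k$ the ratio $\xi^{(i)}_{\varphi_{k-1}(n)}/\xi^{(j)}_{\varphi_{k-1}(n)}$ converges in $[0, \infty]$. Apply the preceding observation $k$ times in succession to extract a further subsequence $\varphi_k$ of $\varphi_{k-1}$ along which, additionally, each of the ratios $\xi^{(i)}_{\varphi_k(n)}/\xi^{(k+1)}_{\varphi_k(n)}$ for $i = 1, \ldots, k$ converges in $[0, \infty]$. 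Finally define the diagonal $\varphi$ by $\varphi(n) = \varphi_n(n)$. For each fixed pair $(i, j)$, the tail $(\varphi(n))_{n \ge \max(i,j)}$ is a subsequence of $(\varphi_{\max(i,j)}(n))_n$, so the corresponding ratio inherits its limit in $[0, \infty]$ along $\varphi$; thus $X_\varphi$ is totally comparable. The only step requiring any care is the bookkeeping at the diagonal stage — verifying that extracting a single $\varphi$ preserves convergence of each of the countably many ratios — and this is the standard outcome of the Cantor diagonal construction.
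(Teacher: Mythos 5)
Your proposal is correct and follows essentially the same route as the paper: the paper's Appendix handles a single pair via the bounded/unbounded dichotomy (the same content as your compactness-of-$[0,\infty]$ observation), builds up finite sets by repeated extraction, and then takes the Cantor diagonal $\varphi(n)=\varphi_n(n)$ of the nested subsequences, exactly as you do. Your bookkeeping at the diagonal stage (each tail of $\varphi$ being a subsequence of the appropriate $\varphi_k$) matches the paper's argument, so there is nothing to fix.
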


By Lemma \ref{total} and the fact  $\mathcal S$ is countable, we can assume, by using a subsequential set, that $\mathcal S$ is totally comparable. 
Let $\widehat{\mathcal S}$ be the collection of equivalence classes of elements in $\mathcal S$ with respect to the relation $\sim$ in Definition \ref{deford}.
For each $\sigma\in \mathcal S$, we use the standard notation $\langle \sigma\rangle$ for the equivalent class of $\sigma$.
Define on $\widehat{\mathcal S}$ the relation $<$ by 
\begin{equation}
    \forall \sigma,\sigma'\in \mathcal S: \langle \sigma\rangle < \langle\sigma'\rangle \text { if and only if }\sigma\succ \sigma'.
\end{equation}
Then the relation $\le $ is an order on $\widehat{\mathcal S}$.
Moreover, $(\widehat{\mathcal S},\le )$ is totally ordered. 

We obtain the following important properties which will be proved in Section \ref{mainproofs}.

\begin{prop}\label{Sprop}
The following statements hold.
\begin{enumerate}[label=\tnum]
\item The cardinality of each equivalence class in $\widehat{\mathcal S}$ is finite.
\item The set $\widehat{\mathcal S}$ is well-ordered with respect to $\le$.
\end{enumerate}
\end{prop}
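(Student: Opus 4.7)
My plan is to encode each $\sigma \in \mathcal S$ uniquely as a triple $(\epsilon, j, k) \in \{0,1\} \times \mathbb Z_{\ge 0}^2$ with $j \le k$, by writing $\sigma = (\alpha_n^{\epsilon}\,\Gamma_{j,n}\Gamma_{k,n})_{n=1}^\infty$ under the convention $\Gamma_{0,n} := 1$. The central technical observation I would first establish is the following monotonicity principle: using $\alpha_n \to \infty$ and $\Gamma_{m+1,n}/\Gamma_{m,n} \to 0$ for $m \ge 0$, whenever two such triples satisfy $\epsilon \ge \epsilon'$, $j \le j'$ and $k \le k'$ with at least one inequality strict, the ratio $\sigma/\sigma'$ is a product of three factors that are each $\succsim 1$, at least one of which tends to $\infty$, forcing $\sigma \succ \sigma'$. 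Both parts of the proposition reduce to this combinatorial skeleton.

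For (i), I would fix an equivalence class $c$ and argue by contradiction assuming $|c| = \infty$. A pigeonhole on the two-valued index $\epsilon$ yields infinitely many distinct triples $(\epsilon^*, j_i, k_i)$ whose $\Gamma$-products are pairwise $\sim$-equivalent. A quick case analysis using $\Gamma_{m,\cdot} \sim \Gamma_{m',\cdot}$ iff $m = m'$ shows that two distinct equivalent pairs must satisfy $j \ne j'$ and $(j - j')(k - k') < 0$. Hence the $j_i$'s are pairwise distinct, and after extracting a subsequence along which they are strictly increasing, the corresponding $k_i$'s form a strictly decreasing sequence of non-negative integers, which is impossible.

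For (ii), I would suppose there is an infinite strictly descending chain $\langle\sigma^{(1)}\rangle > \langle\sigma^{(2)}\rangle > \cdots$ in $\widehat{\mathcal S}$, equivalently $\sigma^{(1)} \prec \sigma^{(2)} \prec \cdots$ in $\mathcal S$. The main tool will be Dickson's lemma applied to the tuples $(1 - \epsilon_i, j_i, k_i) \in \mathbb Z_{\ge 0}^3$, which extracts a subsequence $(i_\ell)$ along which every coordinate is non-decreasing. Since the $\sigma^{(i)}$ are distinct and the triple encoding is injective, at least one coordinate strictly increases at each step of the subsequence. The monotonicity principle from paragraph one then yields $\sigma^{(i_\ell)} \succ \sigma^{(i_{\ell+1})}$, directly contradicting $\sigma^{(i_\ell)} \prec \sigma^{(i_{\ell+1})}$ obtained by transitivity from the original chain.

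The main obstacle I anticipate is the careful verification of the monotonicity principle when $\epsilon$ and the $\Gamma$-indices vary simultaneously, since one must argue that a product of three factors, each only known to be $\succsim 1$, nevertheless tends to $\infty$ whenever at least one of them does. This is handled uniformly by noting that a $\succsim 1$ factor is eventually bounded below by a fixed positive constant, so any single $\succ 1$ factor drags the whole product to $\infty$. The convention $\Gamma_{0,n} := 1$ is what allows me to treat the five sub-families of $\mathcal S$ listed in \eqref{coeffdef} through a single triple encoding rather than case-by-case, and keeps the Dickson step in the cleanest possible form.
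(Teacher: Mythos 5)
Your argument is correct, and it is worth comparing with the paper's. Your triple encoding $(\epsilon,j,k)$ with the convention $\Gamma_{0,n}:=1$, together with your monotonicity principle, is a compact repackaging of the relations displayed in \eqref{seqarray}, and your verification of that principle is sound: each of the three ratio factors is either identically $1$ or tends to $\infty$, so a single divergent factor forces the whole product to diverge. For part (i) your route is close in spirit to the paper's: the paper observes that a class containing some $\sigma_{j,k}$ can meet each of the finitely many rows not dominated by $\sigma_{j,k}$ in at most one entry, whereas you pigeonhole on $\epsilon$ and derive the anti-monotone constraint $(j-j')(k-k')<0$, so that strictly increasing $j_i$'s would force an infinite strictly decreasing sequence of $k_i$'s in $\mathbb Z_{\ge 0}$; both arguments rest on the same fact that entries sharing $(\epsilon,j)$ are strictly $\succ$-ordered. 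For part (ii) you genuinely diverge from the paper: there, the least element of a nonempty $\mathcal F\subset\widehat{\mathcal S}$ is constructed explicitly (intersect $\mathcal F$ with the finitely many relevant rows, take the row-wise minima, then the minimum of those finitely many), while you use the equivalence, for a totally ordered set, of well-ordering with the nonexistence of an infinite strictly descending chain, and then rule out such a chain by Dickson's lemma applied to $(1-\epsilon_i,j_i,k_i)$ plus the monotonicity principle. Your version is more uniform and would generalize to any coefficient family indexed monotonically by a well-quasi-ordered exponent set, at the price of being non-constructive (the descending-chain criterion invokes dependent choice, harmless here and avoidable since $\widehat{\mathcal S}$ is countable), whereas the paper's proof exhibits the minimum directly and reuses the same row structure later, in the proof of Theorem \ref{thm1}. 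Two small points to tighten: what the Dickson step needs is not injectivity of the map from sequences to triples but the trivial converse — two distinct triples could in principle represent coincidentally equal sequences, yet inequivalent sequences necessarily carry distinct triples, which is all you use; and you should state explicitly that the reduction of (ii) to descending chains relies on $\widehat{\mathcal S}$ being totally ordered, which the paper has already secured by passing to a totally comparable subsequential set via Lemma \ref{total}.
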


Because $\widehat{\mathcal S}$ is well-ordered, it is order-isomorphic to an ordinal. 
This fact prompts the following key definition.
\begin{defn}\label{ordef}
    We associate to each $x\in \widehat{\mathcal S}$ a nonzero ordinal number
${\rm ord}_{\widehat{\mathcal S}}(x)$ such that
\begin{align*}
x < y \text{ if and only if }  {\rm ord}_{\widehat{\mathcal S}}(x) <{\rm ord}_{\widehat{\mathcal S}}(y)\text{ for any } x,y\in\widehat{\mathcal S},
\end{align*}
and $\min\{{\rm ord}_{\widehat{\mathcal S}}(x): x\in \widehat{\mathcal S}\}=1$.

Define the function $\ord$ from $\mathcal S$ to the set of ordinal numbers by
 $\ord(\sigma)={\rm ord}_{\widehat{\mathcal S}}(\langle \sigma\rangle)$ for any $\sigma\in \mathcal S$.
\end{defn}

Then one has, for any $\sigma,\sigma'\in \mathcal S$,
\begin{align*}
\sigma \succ \sigma' &\text{ if and only if }  \ord(\sigma) <
\ord(\sigma'), \\
\sigma \sim \sigma' &\text{ if and only if }  \ord(\sigma) =
\ord(\sigma'), 
\end{align*}
and 
$$\min\{\ord(\sigma):\sigma\in \mathcal S\}=1.$$
Consequently, for each $\sigma_* \in \mathcal{S}$, we have equality of the sets
\begin{equation}\label{orule}
\{\ord(\sigma):\sigma\in \mathcal S,\sigma\succ \sigma_*\}
=\{\text{ordinal number } \zeta: 1\le \zeta<\ord(\sigma_*)\}.
\end{equation}

Based on Proposition \ref{Sprop} and Definition \ref{ordef}, we can naturally think of the following procedure. 

\medskip
\noindent\textbf{General Procedure} 
\begin{align}\label{GP}
&\text{Given $x\in \ord(\mathcal S)$, use \eqref{sssub} to find an equation with finitely} \\
&\text{many terms containing the sequences $\sigma\in \mathcal S$ with $\ord(\sigma)=x$.}
\end{align}

\medskip
This procedure may involve ordinal numbers which are not finite.
Denote by $\omega$ the first transfinite number beyond the positive integers.
Recall that 
$$\omega\cdot n=\underbrace{\omega+  \cdots+ \omega}_{n
  \text{ terms}}\text{ and }   \omega^2=\omega\cdot\omega =
  \omega + \omega + \omega + \cdots$$

\subsection{Implications for the NSE}\label{subNSE}
Throughout this subsection, we assume $(v_n)_{n=1}^\infty$, satisfying \eqref{steady}, with $v_n \to v$  has a strict unitary expansion (see Definition \ref{uexp}).  This amounts to  $(v_n)_{n=1}^\infty$ satisfying \eqref{infser} and \eqref{b11} with additional properties.  The proofs of each result in this subsection are in Section \ref{mainproofs}.

We start with the following result on the order of the sequences in \eqref{seqarray} and how the order imposes certain algebraic relations on the limit $v$ and expansion vector $w_1$. 

\begin{thm}\label{thm1} 
We have the following.
   \begin{enumerate}[label=\tnum]
       \item\label{p0} $\ord(\sigma_{0,0})=1$, $\ord(\sigma)>1$ for any $\sigma\in \mathcal S\setminus\{\sigma_{0,0}\}$, and 
        \begin{equation}\label{Bzero}
        B(v,v)=0.
        \end{equation}

       \item\label{p1} If the expansion \eqref{infser} is trivial, i.e., $v_n=v$ for all $n$, then 
       \begin{equation}\label{Avg}
           Av=g.
       \end{equation}
       
       \item\label{p2} If the expansion \eqref{infser} is nontrivial, then comparing $\sigma_0$ and $\sigma_{0,1}$ gives the following.
   \begin{enumerate}[label=\rnum]
       \item If $\sigma_0\succ \sigma_{0,1}$, then $\ord(\sigma_0)=2$, $\ord(\sigma_{0,1})=3$ and
       \eqref{Avg} holds.

       \item If $\sigma_{0,1}\succ \sigma_0$, then $\ord(\sigma_{0,1})=2$,
       \begin{equation}\label{os2}
           \ord(\sigma_0),\ord(\sigma_{1,1})\geq 3, \quad \ord(\sigma_1)\ge 4,
       \end{equation}
       and 
       \begin{equation}\label{rel1}
           \Bs(v,w_1)=0.
       \end{equation}
       
       \item If $\sigma_{0,1}\sim \sigma_0$, then $\ord(\sigma_0)=\ord(\sigma_{0,1})=2$,
       \begin{equation}\label{os1}
           \ord(\sigma_1),\ord(\sigma_{1,1})\ge 3,
       \end{equation}
       and
       $$Av+\lambda\Bs(v,w_1)=g, \text{ for some constant }\lambda> 0.$$
   \end{enumerate}
   
   \item\label{p3} For all $k\ge 1$,
   \begin{equation}\label{zerok}
       k< \ord(\sigma_{0,k}) \le k(k+3)/2+1.
   \end{equation}
   
    \item\label{p4} For all $1\le j \le k$, 
    \begin{equation}\label{ojk}
    k+j< \ord(\sigma_{j,k}) \le \omega\cdot (2j) + (k-j)(k-j+1)/2.    
    \end{equation}

  \item\label{p5} For all $k\ge 0$, 
    \begin{equation}\label{ok}
    k+1< \ord(\sigma_k)\le \omega^2+k.
    \end{equation}
\end{enumerate}
\end{thm}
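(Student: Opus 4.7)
The plan is to prove the six parts of Theorem \ref{thm1} by systematically applying the General Procedure \eqref{GP} to the formal equation \eqref{sssub}, ordinal by ordinal, combined with careful combinatorial counting of the equivalence classes in $\widehat{\mathcal S}$.

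For Part \ref{p0}, I would first show that $\sigma_{0,0}\succ\sigma$ for every $\sigma\in \mathcal S\setminus\{\sigma_{0,0}\}$; this follows directly from the chart \eqref{seqarray} together with $\alpha_n\to\infty$ and $\Gamma_{k,n}\to 0$. Consequently $\ord(\sigma_{0,0})=1$. Dividing \eqref{sssub} by $\alpha_n$, every remaining coefficient sequence has ratio with $\alpha_n$ tending to $0$ (the truncation remainder from the strict unitary expansion vanishes for the same reason, by Definition \ref{uexp}), so in the limit $B(v,v)=0$. Part \ref{p1} is then immediate: $v_n=v$ reduces \eqref{steady} to $Av+\alpha_n B(v,v)=g$, and \eqref{Bzero} gives \eqref{Avg}. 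For Part \ref{p2}, three sub-cases arise from the comparison of $\sigma_0$ with $\sigma_{0,1}$. In case (a), one checks that $\sigma_0\succ\eta$ for every remaining $\eta\in\mathcal S\setminus\{\sigma_{0,0},\sigma_0\}$, so $\ord(\sigma_0)=2$ and $\ord(\sigma_{0,1})=3$; dividing \eqref{sssub} by $1$, using $B(v,v)=0$, and passing to the limit yields $Av=g$. In case (b), the roles are reversed: $\sigma_{0,1}\succ\eta$ for all other $\eta\ne\sigma_{0,0}$, giving $\ord(\sigma_{0,1})=2$; dividing \eqref{sssub} by $\alpha_n\Gamma_{1,n}$ and letting $n\to\infty$ produces \eqref{rel1}, while $\ord(\sigma_0),\ord(\sigma_{1,1})\ge 3$ because both are strictly $\prec\sigma_{0,1}$, and $\ord(\sigma_1)\ge 4$ because $\sigma_1\prec\sigma_0$ with $\ord(\sigma_0)\ge 3$. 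In case (c), $\sigma_0\sim\sigma_{0,1}$ share ordinal $2$ with $\alpha_n\Gamma_{1,n}\to\lambda\in(0,\infty)$; dividing \eqref{sssub} by $1$ and passing to the limit gives $Av+\lambda\Bs(v,w_1)=g$.

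For Parts \ref{p3}, \ref{p4}, \ref{p5}, the lower bounds are obtained by exhibiting explicit chains of pairwise strictly $\succ$-comparable sequences above the target. For $\sigma_{0,k}$, the chain $\sigma_{0,0}\succ\sigma_{0,1}\succ\cdots\succ\sigma_{0,k-1}$ supplies $k$ distinct equivalence classes, giving $\ord(\sigma_{0,k})>k$; for $\sigma_{j,k}$ and $\sigma_k$, one combines analogous horizontal and vertical chains drawn from \eqref{seqarray}. For the upper bounds, the strategy is to identify all sequences in $\mathcal S$ that could possibly be $\succ$ the target. In the case of $\sigma_{0,k}$, direct ratio computations using $\alpha_n\to\infty$ and $\Gamma_{j+1,n}/\Gamma_{j,n}\to 0$ show that $\sigma_j$ with $j\ge k$, $\sigma_{0,j}$ with $j\ge k$, and $\sigma_{j,m}$ with $1\le j\le m$ and $m\ge k$ are all $\preceq \sigma_{0,k}$; the remaining candidates are the $k$ sequences $\sigma_j$ with $0\le j<k$, the $k$ sequences $\sigma_{0,j}$ with $0\le j<k$, and the $k(k-1)/2$ sequences $\sigma_{j,m}$ with $1\le j\le m<k$, for a total of $k(k+3)/2$, whence $\ord(\sigma_{0,k})\le k(k+3)/2+1$. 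The transfinite bounds for $\ord(\sigma_{j,k})$ and $\ord(\sigma_k)$ follow the same philosophy but must accommodate the fact that certain infinite chains, such as $(\sigma_{0,m})_{m\ge 0}$, may lie entirely above the target and contribute order type $\omega$; stacking $2j$ such chains, or countably many in the case of $\sigma_k$, produces the summands $\omega\cdot(2j)$ and $\omega^2$.

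The principal obstacle lies in the upper bounds of Parts \ref{p4} and \ref{p5}, where one must describe the order type of $\{\sigma\in\mathcal S:\sigma\succ\sigma_{j,k}\}$ and $\{\sigma\in\mathcal S:\sigma\succ\sigma_k\}$ with enough precision to extract the correct transfinite ordinal bound. This requires a careful enumeration of the possible chains of $\mathcal S$ above the target and an embedding into an appropriate ordinal sum. By comparison, the algebraic identities in Parts \ref{p0}--\ref{p2} follow almost mechanically once the correct ordinal scale is chosen and the General Procedure is applied.
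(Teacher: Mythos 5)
Your treatment of parts \ref{p0}--\ref{p3} and of all the lower bounds is essentially the paper's argument (chains in \eqref{seqarray} for the lower bounds, the count of the at most $k(k+3)/2$ sequences that can dominate $\sigma_{0,k}$ together with \eqref{orule} for the upper bound in \ref{p3}, and the limit arguments for \eqref{Bzero}, \eqref{Avg}, \eqref{rel1}). One caution there: you repeatedly ``divide \eqref{sssub} and pass to the limit,'' but \eqref{sssub} is only a formal identity with infinitely many terms; the rigorous version substitutes the exact finite truncation $v_n=v+\Gamma_{1,n}w_n^{(1)}$ from \eqref{b4} of Definition \ref{uexp} into \eqref{steady} (giving the finite equation \eqref{se1}) and uses $|w_n^{(1)}|=1$, $w_n^{(1)}\to w_1$. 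This is a presentational fix, not a conceptual one.

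The genuine gap is the upper bound in part \ref{p4} (and hence part \ref{p5}, whose proof needs $\ord(\sigma)<\omega^2$ for all $\sigma$ outside the first row, which is exactly the \ref{p4} bound). You state that ``stacking $2j$ chains of order type $\omega$'' produces $\omega\cdot(2j)$ and then explicitly defer the ``careful enumeration \dots and embedding into an appropriate ordinal sum'' --- but that enumeration \emph{is} the proof, and your heuristic does not capture the actual mechanism. The difficulty is that \eqref{seqarray} does not determine which sequences lie above $\sigma_{j,k}$: the entire first row $R=\{\sigma_m\}$, the entire second row, and even whole rows $\{\sigma_{i,m}\}_m$ with $i<j$ may all dominate $\sigma_{j,k}$, and their ordinals interleave in ways not fixed by the array (for instance, the second row, even when it lies entirely above $\sigma_{j,k}$, carries only the finite ordinals bounded in \ref{p3}, so it does not contribute an extra copy of $\omega$ the way your ``stacking'' picture suggests; the rows $\{\sigma_{i,m}\}_m$, $i<j$, can carry ordinals unbounded below $\omega\cdot(2i+1)$). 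The paper resolves this in the Appendix by an induction on $j$ proving the sharper dichotomy \eqref{neword}: $\ord(\sigma_{j,k})\le\omega\cdot(2j)$ when $R\not\succ\{\sigma_{j,k}\}$ and $\le\omega\cdot(2j)+(k-j)(k-j+1)/2$ when $R\succ\{\sigma_{j,k}\}$. The induction hypothesis confines all earlier rows below $\omega\cdot(2j-1)$, so only $R$ and the finite triangle $\widehat T$ of entries $\sigma_{i,m}$, $j\le i\le m\le k-1$, can occupy ordinals at or above $\omega\cdot(2j-1)$; one then splits $\widehat T$ into the part dominated by all of $R$ and the rest, writes $R\cup\widehat T$ as a finite top block, an infinite consecutive tail of $R$, and a finite bottom block, and shows via \eqref{orule} that this union cannot realize every ordinal up to $\omega\cdot(2j)+(k-j)(k-j+1)/2$, yielding the contradiction. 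Without this (or an equivalent) argument, the transfinite upper bounds in \eqref{ojk} and \eqref{ok} remain unproved, so the proposal as written does not establish parts \ref{p4} and \ref{p5}.
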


\medskip
It was shown in \cite{FJLRYZ} for the NSE that unless a nonzero force is in a special class, $0$ cannot be on the global attractor.  
We next consider this matter for Galerkin approximations in the limit as $G\to \infty$, i.e., what are the consequences for steady states should $v=0$ with $g\neq 0$?  

For convenience, we use the short-hand writing
$$\xi_n \sim \eta_n,\text{ respectively, } \xi_n  \succ \eta_n,$$ for sequences $(\xi_n)_{n=1}^\infty$ and $(\eta_n)_{n=1}^\infty$, to mean 
$$(\xi_n)_{n=1}^\infty \sim (\eta_n)_{n=1}^\infty, \text{ respectively, }  
(\xi_n)_{n=1}^\infty \succ(\eta_n)_{n=1}^\infty.$$

In the case $\sigma_{1,1}\sim \sigma_0$, let 
\begin{equation}\label{xn}
    \lim_{n\to\infty} \alpha_n \Gamma_{1,n}^2= \lambda_*\in(0,\infty)\text{ and set }\chi_n=1-\alpha_n\Gamma_{1,n}^2/\lambda_*,
\end{equation}
and, by the virtue of Lemma \ref{concat} in the Appendix and using a subsequential set, we further assume that the sequence $(\chi_n)_{n=1}^\infty$ satisfies
either

(S1) $\chi_n=0$ for all $n$, or

(S2) $\chi_n>0$ for all $n$ and $\mathcal S\cup \{(\chi_n)_{n=1}^\infty\}$ is totally comparable, or

(S3) $\chi_n<0$ for all $n$ and $\mathcal S\cup \{(-\chi_n)_{n=1}^\infty\}$ is totally comparable.

In the case $v=0$ 
certain orderings of the sequences in \eqref{seqarray} lead to certain algebraic relations for the expansion vectors. Although the $H$-norm $|\cdot|$ and $V$-norm $\|\cdot\|$ are equivalent in $\mathcal H$, note that the 
norm $\|w_1\|$ that appears twice in the next result is specifically the $V$-norm.

\begin{prop} \label{v0eg} Assume $v=0$. 
Then $w_2$ exists in \eqref{infser}, $\sigma_{1,1} \succsim \sigma_0$ and only the following mutually exclusive cases can hold.
\begin{enumerate}[label=\tnum]
    \item\label{cas1} Case $\sigma_{1,1} \succ \sigma_0$. Then 
    $B(w_1,w_1)=0$ and $\sigma_{1,2} \succsim \sigma_0.$
    In addition,
    \begin{enumerate}[label=\nnum]
        \item\label{f1} if $\sigma_{1,2} \succ \sigma_0$, then $\Bs(w_1,w_2)=0$, while 
        \item\label{f2} if $\sigma_{1,2} \sim \sigma_0$, then $\lambda \Bs(w_1,w_2)=g$ for some $\lambda>0$ and $\langle g,w_1\rangle=0$.
    \end{enumerate}
    \smallskip
    \item\label{cas2} Case $\sigma_{1,1} \sim\sigma_0$. Then 
    $\lambda_* B(w_1,w_1)=g$   and $\sigma_{0,2} \succsim \sigma_{0}.$

    More specific cases are listed below.
    \smallskip
    \begin{enumerate}[label=\nnum]
        \item\label{f3} Case $\sigma_{0,2} \sim \sigma_0$. Then $\chi_n=0$ for all $n$, or $\Gamma_{1,n} \succsim |\chi_n|$. 
            \smallskip
        \begin{enumerate}[label=\rnum]
            \item\label{last3} If $\Gamma_{1,n} \sim |\chi_n|$, then 
            $$\lambda_1Aw_1 +\lambda_2\Bs(w_1,w_2)=g  \text{ for some } \lambda_1, \lambda_2\in\mathbb R \text{ with } \lambda_1\lambda_2>0,$$ and 
            $\lambda_* \lambda_1\|w_1\|^2 = \lambda_2\langle g,w_2\rangle$,
            
            \item\label{last4} If $\chi_n=0$ for all $n$,  or $\Gamma_{1,n} \succ |\chi_n|$, then $$Aw_1 +\lambda_2\Bs(w_1,w_2)=0\quad \text{for some} \quad \lambda_2 > 0$$ 
            and $\lambda_* \|w_1\|^2 = \lambda_2 \langle g,w_2\rangle$.
        \end{enumerate}
    \smallskip
        \item\label{f4} Case $\sigma_{0,2} \succ \sigma_0$.
        Then $\chi_n=0$ for all $n$, or $\alpha_n \Gamma_{1,n}\Gamma_{2,n} \succsim |\chi_n|$. 
        \begin{enumerate} [label=\rnum]   \smallskip        
            \item\label{last1} If $\alpha_n \Gamma_{1,n}\Gamma_{2,n} \sim |\chi_n|$, then 
            $\lambda_2\Bs(w_1,w_2)=g$ for some $\lambda_2 \neq 0,$
           \item\label{last2} If $\chi_n=0$ for all $n$, or $\alpha_n \Gamma_{1,n}\Gamma_{2,n} \succ |\chi_n|$, then $\Bs(w_1,w_2)=0.$
        \end{enumerate} 
    One has, in both cases \ref{last1} and \ref{last2}, that 
    $\langle g,w_2 \rangle =\langle B(w_2,w_2),w_1 \rangle  = 0$.
    \end{enumerate}
\end{enumerate}
\end{prop}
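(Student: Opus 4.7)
The plan is to substitute $v=0$ and the strict unitary expansion \eqref{infser} into the steady state equation \eqref{steady}; since every occurrence of $v$ inside $A$ or $B$ vanishes, the formal identity \eqref{sssub} collapses to
\begin{equation*}
-g+\sum_{k\ge 1}\Gamma_{k,n}Aw_k+\alpha_n\sum_{j,k\ge 1}\Gamma_{j,n}\Gamma_{k,n}B(w_j,w_k)=0,
\end{equation*}
and the conclusions will emerge by applying the General Procedure \eqref{GP} to match orders among the sequences of \eqref{coeffdef}. As a preliminary, $-g$ has coefficient $\sigma_0$, while every $\sigma_k\prec\sigma_0$; hence some $\sigma_{j,k}$ must satisfy $\sigma_{j,k}\succsim\sigma_0$, and since $\sigma_{1,1}$ is the maximum of the $\sigma_{j,k}$'s in \eqref{seqarray}, we conclude $\sigma_{1,1}\succsim\sigma_0$. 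The existence of $w_2$ is then forced: if $v_n=\Gamma_{1,n}w_1$ with no further term, division by $\Gamma_{1,n}$ and passage to a limit would give $Aw_1$ as a scalar multiple of $g$, which contradicts the combination $\|w_1\|^2>0$ and $\langle g,w_1\rangle=0$ derived below.

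Case \ref{cas1} proceeds immediately: $\alpha_n\Gamma_{1,n}^2\to\infty$ rules out a nonzero $B(w_1,w_1)$, so $B(w_1,w_1)=0$. Rebalancing $-g$, the next-largest nonlinear coefficient is $\sigma_{1,2}$, which must therefore satisfy $\sigma_{1,2}\succsim\sigma_0$, giving sub-cases \ref{f1} and \ref{f2}. The orthogonality $\langle g,w_1\rangle=0$ in \ref{f2} comes from testing $\lambda\Bs(w_1,w_2)=g$ against $w_1$ and using \eqref{flip} together with $B(w_1,w_1)=0$.

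Case \ref{cas2} starts from $\alpha_n\Gamma_{1,n}^2\to\lambda_*>0$, which gives $\lambda_* B(w_1,w_1)=g$; testing against $w_1$ and using \eqref{flip} yields $\langle g,w_1\rangle=0$. Writing $\alpha_n\Gamma_{1,n}^2=(1-\chi_n)\lambda_*$ as in \eqref{xn} rewrites the equation as
\begin{equation*}
-\chi_n g+\sum_{k\ge 1}\Gamma_{k,n}Aw_k+\alpha_n\sum_{(j,k)\ne(1,1)}\Gamma_{j,n}\Gamma_{k,n}B(w_j,w_k)=0.
\end{equation*}
To show $\sigma_{0,2}\succsim\sigma_0$ I argue by contradiction: if $\alpha_n\Gamma_{2,n}\to 0$, then every $\sigma_{j,k}$ with $(j,k)\ne(1,1)$ is dominated by $\sigma_1=(\Gamma_{1,n})$, so division by $\Gamma_{1,n}$ (after extracting a subsequence so that $\chi_n/\Gamma_{1,n}$ has a limit in $[-\infty,\infty]$) leaves in the limit $Aw_1$ collinear with $g$, incompatible with $\|w_1\|^2>0$ and $\langle g,w_1\rangle=0$. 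The split into \ref{f3} and \ref{f4} then hinges on whether $\alpha_n\Gamma_{2,n}$ converges to a positive number or diverges. The total comparability of $\mathcal S\cup\{(|\chi_n|)\}$ provided by Lemma \ref{concat} makes the three-way disjunction on $\chi_n$ exhaustive; dividing by the appropriate dominant sequence ($\Gamma_{1,n}$, $\alpha_n\Gamma_{1,n}\Gamma_{2,n}$, or $|\chi_n|$) and taking limits yields each claimed algebraic identity. The scalar equalities $\lambda_*\lambda_1\|w_1\|^2=\lambda_2\langle g,w_2\rangle$ in \ref{f3} and the orthogonalities $\langle g,w_2\rangle=\langle B(w_2,w_2),w_1\rangle=0$ in \ref{f4} then follow by testing the derived relations against $w_1$ and $w_2$, using $\langle Aw_1,w_1\rangle=\|w_1\|^2$, the flip identity \eqref{flip}, and $\lambda_* B(w_1,w_1)=g$.

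The main obstacle will be the careful book-keeping of orders: at each stage one must verify that the subdominant terms genuinely vanish in the limit, and the sign of $\chi_n$ must be threaded through correctly to produce the constraint $\lambda_1\lambda_2>0$ in \ref{last3}. Once these accountings are in place, every stated identity drops out of projecting the reduced equations onto $w_1$ and $w_2$.
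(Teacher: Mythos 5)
Your overall strategy is the paper's: truncate the expansion, compare the coefficient sequences in \eqref{coeffdef}, divide by the dominant one, pass to the limit, and test against $w_1,w_2$. But your step establishing that $w_2$ exists has a genuine gap. You argue that if $v_n=\Gamma_{1,n}w_1$ then ``division by $\Gamma_{1,n}$ and passage to a limit'' gives $Aw_1$ as a scalar multiple of $g$, contradicting $\|w_1\|^2>0$ and $\langle g,w_1\rangle=0$ ``derived below''. Two problems. First, in the case $\sigma_{1,1}\succ\sigma_0$ the orthogonality $\langle g,w_1\rangle=0$ is not available: it is only obtained in sub-case \ref{f2} from $\lambda \Bs(w_1,w_2)=g$, which presupposes the very $w_2$ you are trying to produce, so the argument is circular there; moreover, after dividing $\Gamma_{1,n}Aw_1+\alpha_n\Gamma_{1,n}^2B(w_1,w_1)=g$ by $\Gamma_{1,n}$, both remaining quantities $\alpha_n\Gamma_{1,n}B(w_1,w_1)$ and $g/\Gamma_{1,n}$ blow up, so ``passage to a limit'' is unjustified as stated. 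The correct contradiction in that case is different: divide by $\alpha_n\Gamma_{1,n}^2$ to get $B(w_1,w_1)=0$, after which the equation collapses to $\Gamma_{1,n}Aw_1=g$, and letting $n\to\infty$ forces $g=0$. Second, even in the case $\sigma_{1,1}\sim\sigma_0$, where $\langle g,w_1\rangle=0$ does follow from $\lambda_*B(w_1,w_1)=g$, the exact identity is $\Gamma_{1,n}Aw_1=\chi_n g$, and one must run the three-way comparison of $|\chi_n|$ with $\Gamma_{1,n}$ (via Lemma \ref{concat}): if $|\chi_n|\succ\Gamma_{1,n}$ the contradiction is $g=0$, not the orthogonality one; if $\chi_n\equiv 0$ or $\Gamma_{1,n}\succ|\chi_n|$ it is $Aw_1=0$; only when $\Gamma_{1,n}\sim|\chi_n|$ does your scalar-multiple/orthogonality argument apply. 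Your single sentence skips these cases, so the existence of $w_2$ is not proved as written.

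Two secondary points. The claims ``$\chi_n=0$ for all $n$, or $\Gamma_{1,n}\succsim|\chi_n|$'' in \ref{f3} and ``$\chi_n=0$ for all $n$, or $\alpha_n\Gamma_{1,n}\Gamma_{2,n}\succsim|\chi_n|$'' in \ref{f4} are part of the statement and require explicitly eliminating $|\chi_n|\succ\Gamma_{1,n}$ (respectively $|\chi_n|\succ\alpha_n\Gamma_{1,n}\Gamma_{2,n}$) through the $g=0$ contradiction; your blanket ``divide by the dominant sequence'' summary does not record these eliminations. Also, the rigorous versions of your displayed equations are the exact truncations $v_n=\Gamma_{1,n}w_n^{(1)}$ and $v_n=\Gamma_{1,n}w_1+\Gamma_{2,n}w_n^{(2)}$ (with $w_n^{(1)},w_n^{(2)}$ converging to $w_1,w_2$), not the formal infinite series; each limit you take should be performed on these finite identities, as the paper does in \eqref{t1}, \eqref{z2m} and \eqref{z3m}.
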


We saw in Theorem \ref{thm1} that we always have $B(v,v)=0$, and moreover, if the expansion is trivial, then $Av=g$.  We now consider the case where $Av=g$, but the expansion is nontrivial.  
The next result can also be seen as a further investigation of Theorem \ref{thm1}\ref{p2}(a). 

\begin{prop} \label{vAinvg} 
Assume $v=A^{-1}g$ and the expansion \eqref{infser} is nontrivial.
Then 
\begin{equation}\label{B1}
    \Bs(v,w_1)=0
\end{equation}
and there exists $w_2$ in \eqref{infser}. In addition, comparing $\sigma_1$, $\sigma_{0,2}$ and $\sigma_{1,1}$ gives only the following possibilities.

\begin{enumerate}[label=\tnum]
   \item\label{m1} The cases $\sigma_1 \succ \sigma_{0,2} ,\sigma_{1,1} $ and $\sigma_1 \sim \sigma_{1,1} \succ \sigma_{0,2}  $ are impossible.

    \item\label{m2} If $\sigma_{0,2} \succ \sigma_1 ,\sigma_{1,1} $, then
    $\Bs(v,w_2)=0.$

    \item\label{m3} If $\sigma_{1,1} \succ \sigma_{0,2} ,\sigma_1 $, then
    $B(w_1,w_1)=0.$

    \item\label{m4} If $\sigma_1 \sim  \sigma_{0,2} \succ \sigma_{1,1} $, then
    $Aw_1+\lambda \Bs(v,w_2)=0,$ for some $\lambda>0.$

    \item\label{m5} If $\sigma_{0,2} \sim  \sigma_{1,1} \succ \sigma_1  $, then
    $\Bs(v,w_2)+\lambda B(w_1,w_1)=0$, for some $\lambda> 0.$
    
    \item\label{m6} If $\sigma_1 \sim \sigma_{0,2} \sim  \sigma_{1,1}  $, then
    $Aw_1+\lambda_1\Bs(v,w_2)+\lambda_2 B(w_1,w_1)=0$, for some $\lambda_1,\lambda_2>0.$
\end{enumerate}
\end{prop}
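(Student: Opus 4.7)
The plan is to structure the proof in three parts: (i) derive \eqref{B1}; (ii) show $w_2$ must exist; and (iii) apply the general procedure \eqref{GP} to the trio $\{\sigma_1,\sigma_{0,2},\sigma_{1,1}\}$ to classify cases \ref{m1}--\ref{m6}.

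For (i), I observe that $v=A^{-1}g$ makes the coefficient $Av-g$ of $\sigma_0$ in \eqref{sssub} vanish, while Theorem~\ref{thm1}\ref{p0} gives $B(v,v)=0$. I then appeal to Theorem~\ref{thm1}\ref{p2} case by case: in case (a), $\sigma_{0,1}$ is the unique sequence of order $3$ in $\mathcal S$, so applying \eqref{GP} at ordinal $3$ yields $\Bs(v,w_1)=0$; in case (b) this is exactly \eqref{rel1}; in case (c) the identity $Av+\lambda\Bs(v,w_1)=g$ combined with $Av=g$ and $\lambda>0$ gives $\Bs(v,w_1)=0$ as well.

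For (ii), I assume for contradiction that the expansion terminates at $w_1$, i.e.\ $v_n=v+\Gamma_{1,n}w_1$. Substituting into \eqref{steady} and cancelling the terms that vanish ($Av-g$, $B(v,v)$, $\Bs(v,w_1)$) produces the exact identity
\[
Aw_1+\alpha_n\Gamma_{1,n}\,B(w_1,w_1)=0,\qquad n\ge 1.
\]
Under case (a) of Theorem~\ref{thm1}\ref{p2} ($\alpha_n\Gamma_{1,n}\to 0$) this forces $Aw_1=0$; under case (b) ($\alpha_n\Gamma_{1,n}\to\infty$) it forces $B(w_1,w_1)=0$ and hence $Aw_1=0$; under case (c) ($\alpha_n\Gamma_{1,n}\to\lambda>0$) subtracting the identity for two indices with distinct values of $\alpha_n\Gamma_{1,n}$ yields $B(w_1,w_1)=0$ and again $Aw_1=0$. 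Each contradicts $|Aw_1|\ge|w_1|=1$. The one residual possibility $\alpha_n\Gamma_{1,n}\equiv\lambda$ is excluded by the strictness built into Definition~\ref{uexp}.

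For (iii), I substitute \eqref{B1} back into \eqref{sssub}; after cancellation, the lowest-order remaining sequences are $\sigma_1,\sigma_{0,2},\sigma_{1,1}$, coupled respectively to $Aw_1$, $\Bs(v,w_2)$, $B(w_1,w_1)$, with all other remaining sequences strictly dominated. Since $\mathcal S$ is totally comparable, the ordering of this trio has only finitely many possibilities; applying \eqref{GP} to the lowest-order equivalence class gives cases \ref{m2}--\ref{m6}. For \ref{m1}, the sub-possibility $\sigma_1\succ\sigma_{0,2},\sigma_{1,1}$ forces $Aw_1=0$, impossible; the sub-possibility $\sigma_1\sim\sigma_{1,1}\succ\sigma_{0,2}$ corresponds to case (c) of Theorem~\ref{thm1}\ref{p2} with $\alpha_n\Gamma_{1,n}\to c>0$, and the same contradiction as in (ii) shows this is impossible as well. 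I expect the hardest step to be the degenerate subcase of (ii) where $\alpha_n\Gamma_{1,n}$ is essentially constant; here the $\chi_n$-bookkeeping analogous to \eqref{xn}, together with Lemma~\ref{concat} and Definition~\ref{uexp}, will be needed to extract a genuine next-order term.
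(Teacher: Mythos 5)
Your overall skeleton is the same as the paper's: substitute the truncated expansions $v_n=v+\Gamma_{1,n}w_n^{(1)}$ and $v_n=v+\Gamma_{1,n}w_1+\Gamma_{2,n}w_n^{(2)}$ into \eqref{steady}, use $Av=g$ and $B(v,v)=0$ to cancel the top terms, divide by the dominant coefficient sequence and pass to the limit. Your part (i) is fine (indeed \eqref{B1} follows in one stroke, since $\alpha_n\Gamma_{1,n}$ dominates both $\Gamma_{1,n}$ and $\alpha_n\Gamma_{1,n}^2$ regardless of which case of Theorem \ref{thm1}\ref{p2} holds), and your reduction of \ref{m2}--\ref{m6} to the largest member of the trio $\{\sigma_1,\sigma_{0,2},\sigma_{1,1}\}$ matches the paper.

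The genuine gap is in the case $\alpha_n\Gamma_{1,n}\sim 1$, and it occurs twice. In (ii), your subtraction argument needs two indices with distinct values of $\alpha_n\Gamma_{1,n}$, and your claim that the constant case $\alpha_n\Gamma_{1,n}\equiv\lambda$ is ``excluded by the strictness built into Definition~\ref{uexp}'' is false: Definition \ref{uexp} constrains only the $w$'s (unit vectors) and the $\Gamma$'s (positive, tending to $0$, with vanishing successive ratios); it says nothing about $\alpha_n$, and $\Gamma_{1,n}=\lambda/\alpha_n$ is perfectly admissible. The $\chi_n$/Lemma~\ref{concat} bookkeeping you anticipate is also not the right tool here. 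What is missing is the orthogonality \eqref{flip}: from $Aw_1+\lambda B(w_1,w_1)=0$ with $\lambda>0$ (obtained by dividing by $\Gamma_{1,n}$ and letting $n\to\infty$, which needs no subtraction and covers the constant case), take the scalar product with $w_1$ to get $\|w_1\|^2+\lambda\langle B(w_1,w_1),w_1\rangle=\|w_1\|^2=0$, contradicting $|w_1|=1$. This is exactly how the paper closes the case. The same omission infects your treatment of the sub-case $\sigma_1\sim\sigma_{1,1}\succ\sigma_{0,2}$ in \ref{m1}: there the identity is only asymptotic (it carries genuinely nonzero lower-order terms involving $w_n^{(2)}$), so ``the same contradiction as in (ii)'' cannot even be set up by subtraction; one must again pass to the limit to get $Aw_1+\lambda B(w_1,w_1)=0$, $\lambda>0$, and invoke the scalar product with $w_1$. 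With that one idea inserted in both places, your argument is complete and coincides with the paper's proof.
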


\section{General expansion results}\label{rigor}

In this section, we introduce a new type of asymptotic expansion in general normed spaces.   Their existence is established in Lemma \ref{mainlem}. Numerical results for the 3D NSE are presented in subsection \ref{subnum} which demonstrate both the construction of the expansion as well as a particular case of Theorem \ref{thm1}.
Examples showing a variety of such expansions are given subsection \ref{sueeg}.
\begin{defn}\label{uexp}
Let $(Z,\|\cdot\|_Z)$ be a normed space over $\mathbb C$ or $\mathbb R$.
    We say a sequence $(v_n)_{n=N_0}^\infty$ in $Z$, for some integer $N_0\ge 1$,  has a strict unitary expansion if it satisfies one of the following three exclusive conditions.
\begin{enumerate}[label=\nnum]
    \item\label{d1} There is $v\in Z$ such that $v_n=v$ for all $n\ge N_0$.

    \item\label{d2} There are integer $K\ge 1$, vector $v\in Z$, unit vectors $w_k\in Z$, unit vectors $w_n^{(k)}\in Z$ and positive numbers $\Gamma_{k,n}$, for $n\ge N_0$ and $1\le k\le K$, such that 
\begin{align}
\label{b0}
\lim_{n\to\infty} \Gamma_{1,n}&=0,\\
\label{b1}
\lim_{n\to\infty} \frac{\Gamma_{k+1,n}}{\Gamma_{k,n}}&=0 \\
\intertext{ for all   $1\le k< K$,}
\label{b3}
\lim_{n\to\infty} w_n^{(k)}&=w_k 
\end{align}
for all $1\le k\le K$,
\begin{equation}\label{b4}
    v_n=v+\sum_{j=1}^{k-1} \Gamma_{j,n}w_j+\Gamma_{k,n}w_n^{(k)}
\end{equation}
for all $n\ge N_0$, $1\le k\le K$, and 
\begin{equation}\label{b6}
    w_n^{(K)}=w_{K} \text{ for all $n\ge N_0$.}
\end{equation}

   \item\label{d3}  
 There are a vector $v\in Z$, unit vectors $w_k\in Z$, vectors $w_n^{(k)}\in Z$,  numbers $\Gamma_{k,n}> 0$  and integers $N_k\ge N_0$ for all $n\ge N_0$, $k\ge 1$  such that one has \eqref{b0}, while \eqref{b1}, \eqref{b3}  hold for all $k\ge 1$, and  \eqref{b4} holds for all $k\ge 1$, $n\ge N_0$, and  each $w_n^{(k)}$ is a unit vector in $Z$ for $k\ge 1$ and $n\ge N_k$.
\end{enumerate}

We say the expansion in Case \ref{d1} is trivial, the expansions in Cases \ref{d1} and \ref{d2} are finite, and 
the expansion in Case \ref{d3} is infinite.
\end{defn}

We denote the strict unitary expansions by  
\begin{equation}\label{vsum}
    v_n\approx v+\sum_k \Gamma_{k,n} w_k,
\end{equation}
where the last summation can be void or finite or infinite.

Clearly, \eqref{vsum} implies
\begin{equation}\label{vlim}
    \lim_{n\to\infty} v_n=v,
\end{equation}
and $\Gamma_{k,n}$, when exists, satisfies
\begin{equation*}%\label{Gamze}
\lim_{n\to\infty} \Gamma_{k,n}=0.
\end{equation*}

If $(v_n)_{n=N_0}^\infty$ has the strict unitary expansion \eqref{vsum}, then any subsequence $(v_{n_j})_{j=1}^\infty$
has the strict unitary expansion
\begin{equation}\label{vsubsum}
   v_{n_j}\approx v+\sum_k \Gamma_{k,n_j} w_k.
\end{equation}

Thanks to property \eqref{vsubsum}, whenever we use a subsequential set of $\mathcal S$ in Section \ref{results}, the strict unitary expansion \eqref{infser} is still valid.

The motivation for Definition \ref{uexp} is the following result.

\begin{lem}[Expansion lemma]\label{mainlem} 
    Any bounded sequence in a finite dimensional normed space over $\mathbb C$ or $\mathbb R$ has a subsequence that possesses a strict unitary expansion.
\end{lem}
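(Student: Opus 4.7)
The plan is a nested inductive extraction, peeling off the dominant direction in the residual at each level, and then closing with a diagonal argument if the process never terminates. The only topological input is that in a finite dimensional normed space closed bounded sets are compact; in particular Bolzano--Weierstrass applies, and the unit sphere is sequentially compact.

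First, I would use Bolzano--Weierstrass to pass to a convergent subsequence $v_n \to v \in Z$. If $v_n = v$ for infinitely many $n$, retain those indices to land in Case \ref{d1} of Definition \ref{uexp}. Otherwise, after a further extraction we may assume $v_n \neq v$ for all $n$; set $\Gamma_{1,n} = \|v_n - v\|_Z > 0$ and $w_n^{(1)} = (v_n - v)/\Gamma_{1,n}$, and extract once more so that $w_n^{(1)} \to w_1$ for some unit vector $w_1$.

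Inductively, assume we have a subsequence on which \eqref{b4} holds up to level $k$ with $w_n^{(k)} \to w_k$ and \eqref{b1} for indices below $k$. If $w_n^{(k)} = w_k$ for infinitely many $n$, keep those indices to land in Case \ref{d2} with $K = k$ (condition \eqref{b6} then holds). Otherwise, after a further extraction the residual
\begin{equation*}
r_n^{(k+1)} := v_n - v - \sum_{j=1}^k \Gamma_{j,n} w_j = \Gamma_{k,n}\bigl(w_n^{(k)} - w_k\bigr)
\end{equation*}
is nonzero for every $n$, so one may set $\Gamma_{k+1,n} = \|r_n^{(k+1)}\|_Z > 0$ and $w_n^{(k+1)} = r_n^{(k+1)}/\Gamma_{k+1,n}$, which is a unit vector by construction. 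Since $\Gamma_{k+1,n}/\Gamma_{k,n} = \|w_n^{(k)}-w_k\|_Z \to 0$, condition \eqref{b1} is preserved at the new level; a final application of compactness of the unit sphere yields $w_n^{(k+1)} \to w_{k+1}$ on a further subsequence, completing the inductive step. Note also that \eqref{b0} follows from $v_n \to v$ at the base.

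If the process never terminates in Case \ref{d1} or \ref{d2}, let $\varphi_k$ denote the nested subsequences produced, and diagonalize by $n_\ell = \varphi_\ell(\ell)$. For each fixed $k$, the tail $(n_\ell)_{\ell \ge k}$ is a subsequence of the range of $\varphi_k$, so \eqref{b4}, \eqref{b1} and \eqref{b3} all survive along the diagonal, with $w_{n_\ell}^{(k)}$ a unit vector for $\ell \ge N_k := k$; this realizes Case \ref{d3}. I expect the main subtlety to be the bookkeeping of the diagonalization---ensuring that every ratio limit and every $w_n^{(k)}$--convergence is simultaneously retained by a single sequence---but this is standard once one notes that all level-$k$ constructions depend only on the subsequences of levels $\le k$.
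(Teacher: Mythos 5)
Your proposal is correct and follows essentially the same route as the paper's proof: Bolzano--Weierstrass for the limit $v$, successive peeling of the normalized residual with compactness of the unit sphere at each level, termination in Cases \ref{d1}/\ref{d2} of Definition \ref{uexp} when the residual direction stabilizes, and a diagonal subsequence $\varphi_\ell(\ell)$ for the infinite Case \ref{d3}. The only step the paper carries out explicitly that you leave as "bookkeeping" is the definition of positive $\Gamma_{k,n}$ and of $w_n^{(k)}$ for the finitely many diagonal indices with $n< N_k=k$ (done there via \eqref{GWsmall}), which is needed because Case \ref{d3} requires \eqref{b4} and $\Gamma_{k,n}>0$ for \emph{all} $n\ge N_0$, not just $n\ge N_k$; this is a routine patch and does not affect the validity of your argument.
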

\begin{proof}   
Let $(v_n)_{n=1}^\infty$ be a bounded sequence in a finite dimensional normed space $(Z,\|\cdot\|_Z)$ over $\mathbb C$ or $\mathbb R$.
By the relative compactness of bounded subsets of $Z$, $(v_n)_{n=1}^\infty$ 
has a convergent subsequence.
Hence there exist $v\in Z$ and a subsequence $(\varphi_0(n))_{n=1}^\infty$ of $(n)_{n=1}^\infty$ such that
\begin{equation}\label{vnv}
    \lim_{n\in\varphi_0(\mathbb N), n\to \infty} v_n=v.
\end{equation} 

If $v_n=v$ for all sufficiently large $n$, then we have Case \ref{d1} in Definition \ref{uexp} for a subsequence of $v_n$. 

Otherwise, define $\gamma_n^{(1)}=\|v_n-v\|_Z$ for $n\in\varphi_0(\mathbb N)$. Then there is a subsequence $(\varphi_1(n))_{n=1}^\infty$ of $\varphi_0(n)_{n=1}^\infty$ such that
$\gamma_n^{(1)}>0$ for all $n\in\varphi_1(\mathbb N)$. Define 
\begin{equation*}%\label{w1}
w_n^{(1)}=\frac{1}{\gamma_n^{(1)}}(v_n-v)\text{ for all }n\in \varphi_1(\mathbb N).    
\end{equation*}

Then, for all $n\in \varphi_1(\mathbb N)$,
\begin{equation}\label{w2}
    v_n=v+\gamma_n^{(1)}w_n^{(1)}, \
    \|w_n^{(1)}\|_Z=1,\ 
    \lim_{n\in\varphi_1(\mathbb N),n\to\infty}\gamma_n^{(1)}=0.
\end{equation}

By compactness of the unit sphere in $Z$, there exists a 
subsequence of $(\varphi_1(n))_{n=1}^\infty$, still denoted by $(\varphi_1(n))_{n=1}^\infty$  such that 
\begin{equation*}%\label{w3}
    \lim_{n\in\varphi_1(\mathbb N),n\to\infty}w_n^{(1)}=w_1\in Z,
\end{equation*}
where the limit defines $w_1$. Clearly, $\|w_1\|_Z=1$. 

Denote $w_0=v$ and let $w_n^{(0)}=v_n$ for all $n\ge 1$.

For $k\ge 1$, we define the following statement $(T_k)$.

($T_k$) \emph{There are subsequences $(\varphi_j(n))_{n=1}^\infty$ of $(n)_{n=1}^\infty$, for $1\le j\le k$, with each $(\varphi_{j}(n))_{n=1}^\infty$ being a subsequence of $(\varphi_{j-1}(n))_{n=1}^\infty$, unit vectors $w_j$, positive numbers $\gamma_n^{(j)}$ and unit vectors $w_n^{(j)}$, for $n\in\varphi_j(\mathbb N)$, defined by
\begin{equation*}%\label{w4}
    \gamma_n^{(j)}=\|w_n^{(j-1)}-w_{j-1}\|_Z,
\end{equation*}
and
\begin{equation*}%\label{w5}
w_n^{(j)}=\frac1{\gamma_n^{(j)}}(w_n^{(j-1)}-w_{j-1}), \text{ i.e., }
w_n^{(j-1)}=w_{j-1}+\gamma_n^{(j)}w_n^{(j)},
\end{equation*}
such that, for $1\le j\le k$, 
\begin{equation}\label{w6}
    \lim_{n\in\varphi_j(\mathbb N),n\to\infty}w_n^{(j)}=w_j\in Z,
\end{equation}
and
\begin{equation}\label{w7}
v_n=v+\gamma_n^{(1)}w_1+\gamma_n^{(1)}\gamma_n^{(2)}w_2+\cdots+
\gamma_n^{(1)}\cdots\gamma_n^{(j-1)}w_{j-1}+\gamma_n^{(1)}\cdots\gamma_n^{(j)}w_n^{(j)}
\end{equation}
for $n\in\varphi_j(\mathbb N)$.
} %T_k

Observe, when ($T_k$) holds true and $1\le j\le k$, that 
\begin{equation}\label{gamlim}
    \lim_{n\in\varphi_j(\mathbb N),n\to\infty}\gamma_n^{(j)}=0.
\end{equation}
Indeed, \eqref{gamlim} is true for $j=1$ thanks to the limit in \eqref{w2}, and for $1<j\le k$, we have from \eqref{w6} that
$$ \lim_{n\in\varphi_{j}(\mathbb N),n\to\infty}\gamma_n^{(j)}=\lim_{n\in\varphi_{j-1}(\mathbb N),n\to\infty}\gamma_n^{(j)}
=\lim_{n\in\varphi_{j-1}(\mathbb N),n\to\infty}\|w_n^{(j-1)}-w_{j-1}\|_Z=0.$$

We already verified that ($T_1$) holds.
Let $k\ge 1$. Assume ($T_k$) holds.
In particular, with $j=k$ in \eqref{w7}, we have, for $n\in \varphi_k(\mathbb N)$,
\begin{equation}\label{w8}
  v_n=v+\gamma_n^{(1)}w_1+\gamma_n^{(1)}\gamma_n^{(2)}w_2+\cdots+
\gamma_n^{(1)}\cdots\gamma_n^{(k-1)}w_{k-1}+\gamma_n^{(1)}\cdots\gamma_n^{(k)}w_n^{(k)}.
\end{equation}

\medskip
\emph{Case 1. $w_n^{(k)}=w_k$ for all large $n\in \varphi_k(\mathbb N)$.} 
Set $K=k$.
Then there is a subsequence $(\varphi(n))_{n=1}^\infty$ of $(\varphi_k(n))_{n=1}^\infty$ such that
\begin{equation*}%\label{w12}
    w_n^{(K)}= w_{K}\text{ for all }n\in\varphi(\mathbb N).
\end{equation*}

Let $V_n=v_{\varphi(n)}$, and for $1\le j\le K$, $\Gamma_{j,n}=\gamma_{\varphi(n)}^{(1)}\gamma_{\varphi(n)}^{(2)}\ldots \gamma_{\varphi(n)}^{(j)}$, and $\widetilde w_n^{(j)}=w_{\varphi(n)}^{(j)}$. 

For $0\le j\le K$, $(\varphi(n))_{n=1}^\infty$ is a subsequence of $(\varphi_j(n))_{n=j}^\infty$.
Hence $(V_n)_{n=1}^\infty$ is a subsequence of $(v_n)_{n=1}^\infty$, and more generally, of  $( v_{\varphi_j(n)} )_{n=1}^\infty$ for $0\le j\le K$. We have from the limit in \eqref{w2} that
\begin{equation}\label{Gam1}
    \Gamma_{1,n}=\gamma_{\varphi(n)}^{(1)}\to 0\text{ as }n\to\infty.
\end{equation}
As a consequence of \eqref{gamlim}, we also have, for all $1\le j\le K$,
\begin{equation}\label{gamlim2}
    \lim_{n\to\infty}\gamma_{\varphi(n)}^{(j)}=0.
\end{equation}
If $K>1$ and $1\le j<K$, it follows \eqref{gamlim2} that 
\begin{equation}\label{limGrat}
\lim_{n\to\infty}\frac{\Gamma_{j+1,n}}{\Gamma_{j,n}}=\lim_{n\to\infty}\gamma_{\varphi(n)}^{(j+1)}=0.
\end{equation}
By \eqref{w6}, it holds for $1\le j\le K$ that
\begin{equation}\label{w16}
    \lim_{n\to\infty}\widetilde w_n^{(j)}=w_j.
\end{equation}
By \eqref{w7}, we have, for $1\le j\le K$ and $n\ge 1$,
\begin{equation}\label{w17}
V_n=v_{\varphi(n)}=v+\Gamma_{1,n}w_1+\cdots+
\Gamma_{j-1,n}w_{j-1}+\Gamma_{j,n}\widetilde w_n^{(j)},
\end{equation}
and $\widetilde w_n^{(K)}=w_K$ for $n\ge 1$.
Thus we obtain Case \ref{d2} in Definition \ref{uexp} for the sequence $(V_n)_{n=1}^\infty$ 
with $\widetilde w_n^{(j)}$ replacing $w_n^{(j)}$; we stop the induction.

\medskip
\emph{Case 2. There is a subsequence $(\varphi_{k+1}(n))_{n=1}^\infty$ of $(\varphi_k(n))_{n=1}^\infty$ such that
$w_n^{(k)}\ne w_k$ for all $n\in\varphi_{k+1}(\mathbb N)$.}
Define, for all $n\in\varphi_{k+1}(\mathbb N)$,  $\gamma_n^{(k+1)}=\|w_n^{(k)}-w_k\|_Z>0$ and 
\begin{equation}\label{w9}
w_n^{(k+1)}=\frac1{\gamma_n^{(k+1)}}(w_n^{(k)}-w_k), \text{ which yields }
w_n^{(k)}=w_k+\gamma_n^{(k+1)}w_n^{(k+1)}.    
\end{equation}
Substituting  the last equation of \eqref{w9} into \eqref{w8}, one has 
\begin{equation*}%\label{w10}
  v_n=v+\gamma_n^{(1)}w_1+\gamma_n^{(1)}\gamma_n^{(2)}w_2+\cdots+
\gamma_n^{(1)}\cdots\gamma_n^{(k)}w_{k}+\gamma_n^{(1)}\cdots\gamma_n^{(k+1)}w_n^{(k+1)}.
\end{equation*}
Also, $\|w_n^{(k+1)}\|_Z=1$ for $n\in \varphi_{k+1}(\mathbb N)$.

By the compactness of the unit sphere in $Z$ again, we can extract a subsequence of $(\varphi_{k+1}(n))_{n=1}^\infty$, but still denoted by $(\varphi_{k+1}(n))_{n=1}^\infty$,  such that
\begin{equation*}%\label{w11}
    \lim_{n\in \varphi_{k+1}(\mathbb N),n\to\infty}w_n^{(k+1)}=w_{k+1}\text{ with } \|w_{k+1}\|_Z=1.
\end{equation*}
Thus, ($T_{k+1}$) holds.

If the induction does not stop at any step $k$, then by the Induction Principle, we have ($T_k$) holds for all $k\ge 1$. 
Let $\varphi(n)=\varphi_n(n)$ and  $V_n=v_{\varphi(n)}$.
For $j\ge 1$, $(\varphi(n))_{n=j}^\infty$ is a subsequence of $(\varphi_j(n))_{n=j}^\infty$.
Hence $(V_n)_{n=1}^\infty$ is a subsequence of $(v_n)_{n=1}^\infty$, and more generally, $(V_n)_{n=j}^\infty$ is a subsequence of of  $( v_{\varphi_j(n)} )_{n=j}^\infty$ for $j\ge 1$. 
We define positive numbers $\Gamma_{k,n}$ and vectors $\widetilde w_n^{(k)}\in Z$ as follows.

For $k=1$, define 
$$\Gamma_{1,n}=\gamma_{\varphi(n)}^{(1)}\text{ and } \widetilde w_n^{(1)}=w_{\varphi(n)}^{(1)}\text{  for all }n\ge 1.$$

For $k\ge 2$, when $1\le n <k$ let
$$z_{k,n}=V_n-v-\sum_{j=1}^{k-1}\Gamma_{j,n}w_j$$
and define 
\begin{equation}\label{GWsmall}
\left\{     \begin{aligned}
&\Gamma_{k,n}=\|z_{k,n}\|_Z, \quad \widetilde w_n^{(1)}=\frac{1}{\Gamma_{k,n}}z_{k,n} &&\text{ in the case }z_{k,n} \ne 0,\\
&\Gamma_{k,n}=\frac1{2^{nk}} \Gamma_{k-1,n}, \quad \widetilde w_n^{(1)}=0 &&\text{ in the case }z_{k,n} = 0;
\end{aligned}
\right.
\end{equation}
while when $n\ge k$, define
\begin{equation}\label{GWlarge}
\Gamma_{k,n}=\gamma_{\varphi(n)}^{(1)}\gamma_{\varphi(n)}^{(2)}\ldots \gamma_{\varphi(n)}^{(k)} \text{ and }
\widetilde w_n^{(k)}=w_{\varphi(n)}^{(k)}.
\end{equation}

Note that we still have the limit \eqref{Gam1}, while the limit \eqref{gamlim2} is true for all $j\ge 1$, which implies that property \eqref{limGrat} now holds true for all $j\ge 1$. Moreover, the limit \eqref{w16} is true for all $j\ge 1$.

Set $N_k=k$ for all $k\ge 1$. For $k\ge 1$, by \eqref{GWlarge} and ($\mathcal T_k$), we have $\|\widetilde w_{n}^{(k)}\|_Z=1$ for all $n\ge N_k$.
By \eqref{w7}, we obtain \eqref{w17} for $j\ge 1$, $n\ge N_j$.
By definition \eqref{GWsmall}, we also obtain \eqref{w17} for $n< N_j$. Thus, the identity \eqref{w17} holds true for all $j\ge 1$ and $n\ge 1$.
Therefore, we obtain Case \ref{d3} in Definition \ref{uexp} for the sequence $(V_n)_{n=1}^\infty$ 
with $\widetilde w_n^{(k)}$ replacing $w_n^{(k)}$.
The proof of Lemma \ref{mainlem} is complete.
\end{proof}

\subsection{A numerical demonstration for the 3D NSE}\label{subnum}
We next demonstrate the algorithm for the expansion in the proof of Lemma \ref{mainlem} numerically for the 3D NSE, with periodic boundary conditions.  Considering equation \eqref{steady}, to generate a sequence of steady states, we choose one, 
$$
\bu(x,y,z)=\begin{bmatrix}\sin(z)\\\sin(x)\\0\end{bmatrix} ,
$$
to start, and let that define the fixed balancing force $g=\bg(\cdot)$ at $\alpha_1=1$
$$
\bg(x,y,z)=-\bu+\begin{bmatrix} 0 \\ \sin(z)\cos(x) \\ 0 \end{bmatrix}=-\bu+
\frac{1}{2}\begin{bmatrix} 0 \\  \sin(z+x)+\sin(z-x) \\ 0 \end{bmatrix},
$$
i.e., the Fourier coefficients are $\hat \bg_\bk= -\hat \bu_\bk +\hat \bB_\bk $ where 
$\hat\bu_\bk=\hat\bB_\bk=\b0$ except for
$$
\hat{\bu}_{\mathbf e_1} =-\frac{i}{2}\mathbf e_2,\quad 
\hat{\bu}_{\mathbf e_3} =-\frac{i}{2}\mathbf e_1,  \quad \text{and} \quad
\hat{\bB}_{(1,0,1)}=\hat{\bB}_{(-1,0,1)}=-\frac{i}{4} \mathbf e_2,
$$
and the reality condition $\hat\bu_{-\bk}=\overline{\hat\bu}_\bk$,
$\hat\bB_{-\bk}=\overline{\hat\bB}_\bk$.

Note, in this case, that the $H$-norm $|\bg|=\pi\sqrt{10\pi}\ne 1$. Then, strictly speaking, the Grashof number is $G_n=\alpha_n|\bg|$.

We use a small Galerkin approximation ($\Pi_N=\bar P_9$) so we can work with a fairly long sequence. Since the number of modes is small, we compute the Fourier coefficients of the bilinear term $B(\bu,\bu)$ directly through the convolution
$$
\hat \bB_\bk =i\sum_{|\bj|\le 3}  (\bk\cdot \hat \bu_\bj) \hat \bu_{\bk-\bj} 
-(\bk\cdot \hat \bu_\bj)(\bk\cdot \hat \bu_{\bk-\bj})\frac{\bk}{|\bk|^2}
$$
as opposed doing so by fast Fourier transform.
By the reality condition, we may work with half the modes, 
which in this case results in a dynamical system with 366 real variables. 
Using the bifurcation software package AUTO \cite{Doedel}, we compute a branch of steady states, whose norms are plotted in Figure \ref{Bif6}. Due to the length factor in the Parseval identity, and using only half the modes, we relate
$$
\|v\|_Z^2=\frac12 \sum_{|\bk| \le 3} |\hat {\mathbf v}_{\bk}|^2= \frac{1}{16\pi^3} |v|^2,
$$ 
when considering $Z=\mathcal H$ in Lemma \ref{mainlem}.

The computed  sequence of steady states $v_n$ appears to converge,  as $\alpha_n$ increases, to a limit close to $v_{n^*}$, where $\alpha_{n^*}=200000$.  We then set $\gamma_n^{(1)}=\|v_n-v_{n^*}\|_Z$ for $n \le n^*-1$ and 
\begin{equation*}%\label{w1}
w_n^{(1)}=\frac{1}{\gamma_n^{(1)}}(v_n-v_{n^*})\text{ for all }n\le n^*-1.  \end{equation*}
The sequence $w_n^{(1)}$ also appears to converge for this data; no subsequence is needed.  We take as its limit $w_{n^*-1}^{(1)}$ and set $\gamma_n^{(2)}=\|w_n^{(1)}-w_{n^*-1}^{(1)}\|_Z$ for $n \le n^*-2$. 
This enables us to plot in Figure \ref{sigfig6} the first six nontrivial elements in the left corner of \eqref{seqarray}.  The rapid decay for each sequence that starts around $G= 190000$ is likely due to taking $v_{n^*}$ in place of the true limit $v$ (and similarly for $w_{n^*-1}^{(1)}$).  AUTO uses Newton iteration to continue the sequence of steady states.  We set its tolerance to both $10^{-7}$ and $10^{-9}$ and found no perceptible difference in the resulting plots in Figure \ref{sigfig6}. 
We plot in Figure \ref{ratfig6} the ratios of pairs of these sequences which suggest the following ordering:
$$
\sigma_{0,0} \succ \sigma_0 \sim \sigma_{0,1}\succ\sigma_1 \sim\sigma_{1,1}\sim \sigma_{0,2}\succ \sigma_2 \sim \sigma_{1,2}.$$
The ratio $\sigma_{0,1}/\sigma_0=\sigma_{0,1}$ which is shown in Figure \ref{sigfig6}, 
is nearly constant, until $\alpha_n > 180000$.    If instead,  we were to interpret the plot of 
$\sigma_{0,1}/\sigma_0=\sigma_{0,1}$ to be truly starting to decay toward 0, then we would have case (iii), (a) of Theorem \ref{thm1}. But we find that $\|Av_{n^*}-g\|_Z\approx 0.66$ which would contradict \eqref{Avg}.  We conclude that $\sigma_0 \sim \sigma_{0,1}$ and hence that this example demonstrates case (iii), (c) of Theorem \ref{thm1}. The next relation, $\sigma_{0,1}\succ\sigma_1$, must hold.  This is confirmed by the clear decay in the plot of $\sigma_{1}/\sigma_{0,1}$.  Since $\sigma_0 \sim \sigma_{0,1}$, we must also have $\sigma_1 \sim\sigma_{1,1}$ and $\sigma_2 \sim \sigma_{1,2}$ (the latter confirmed by the plot of $\sigma_{1,2}/\sigma_{2}$).  Meanwhile, the plot of $\sigma_{1}/\sigma_{0,2}$, being fairly flat, suggests $\sigma_{1}\sim\sigma_{0,2}$.

\begin{figure}[ht]
\psfrag{100000}{$10^6$}
\psfrag{G}{$\alpha_n$}
\psfrag{L2}{$\|v_n\|_Z$}
\centerline{\includegraphics[scale=1.3]{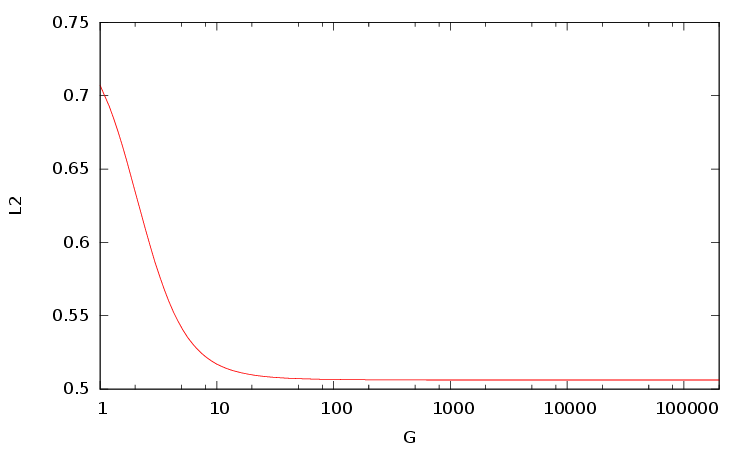}  }
\vskip .15 truein
\caption{Bifurcation diagram.}
\label{Bif6}
\end{figure}

\begin{figure}[ht]
\psfrag{'fort.12'}{\tiny $\sigma_1$}
\psfrag{'fort.13'}{\tiny$\sigma_{0,1}$}
\psfrag{'fort.14'}{\tiny$\sigma_{1,1}$}
\psfrag{'fort.15'}{\tiny$\sigma_{2}$}
\psfrag{'fort.16'}{\tiny$\sigma_{0,2}$}
\psfrag{'fort.17'}{\tiny$\sigma_{1,2}$}
\psfrag{G}{$\alpha_n$ }
\centerline{
\includegraphics[scale=1.2]{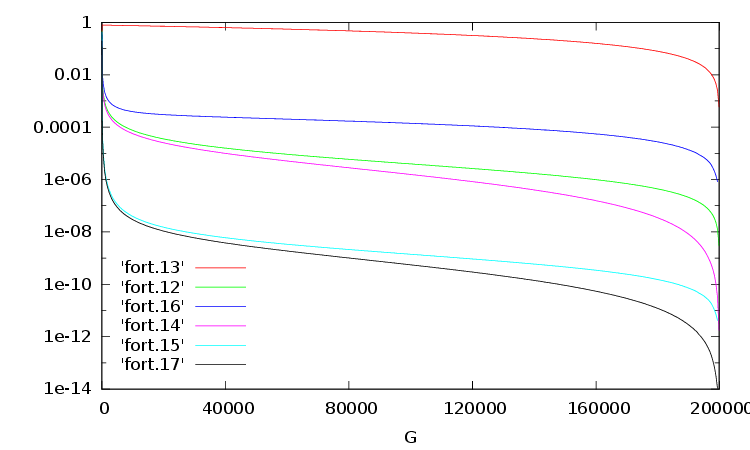}   }
\vskip .15 truein
\caption{Sequences in \eqref{seqarray}.}
\label{sigfig6}
\end{figure}

\begin{figure}[ht]
\psfrag{'fort.18'}{\tiny $\sigma_1/\sigma_{0,1}$}
\psfrag{'fort.19'}{\tiny$\sigma_{1}/\sigma_{0,2}$}
\psfrag{'fort.20'}{\tiny$\sigma_{1,1}/\sigma_{0,2}$}
\psfrag{'fort.21'}{\tiny$\sigma_2/\sigma_{1,1}$}
\psfrag{'fort.22'}{\tiny$\sigma_{1,2}/\sigma_{2}$}
\psfrag{G}{$\alpha_n$}
\centerline{
\includegraphics[scale=1.2]{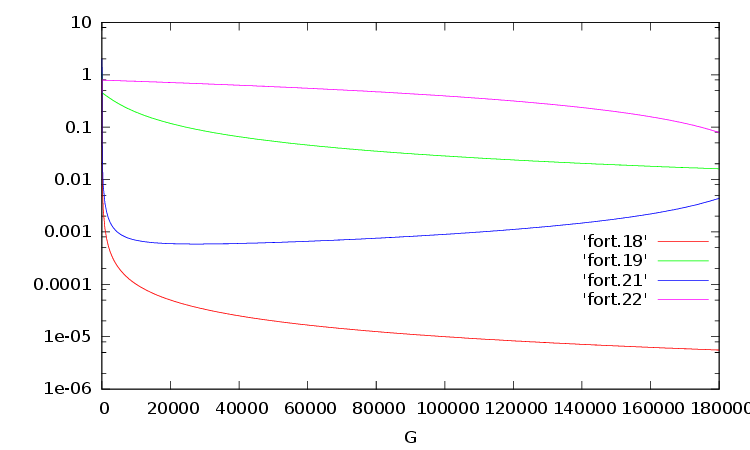}   }
\vskip .15 truein
\caption{Sequence ratios.}
\label{ratfig6}
\end{figure}

\subsection{On the uniqueness of strict unitary expansions}

Because there are no convergence rates specified for $\Gamma_{k,n}$, as $n\to\infty$, a sequence $(v_n)_{n=1}^\infty$ may be written as different sums in the form of the right-hand side of \eqref{vsum}. However, thanks to the condition $\|w_n^{(k)}\|_Z=1$, for sufficiently large $n$, it turns out that the strict unitary expansions are unique for sufficiently large $n$. More precisely, we have the following ``asymptotic uniqueness".

\begin{prop}\label{unique}
Let $(v_n)_{n=N_0}^\infty$ have a strict unitary expansion as in Definition \ref{uexp}.
 Denote
 $$\bar N_k=\begin{cases}
     N_0, & \text{ in Case \ref{d2} for } 1\le k\le K,\\
     \max\{N_1,N_2,\ldots,N_k\}, & \text{ in Case \ref{d3} for all } k\ge 1.
 \end{cases}
 $$
 Then the vectors $v$, $w_k$, $w_n^{(k)}$ and positive numbers $\Gamma_{k,n}$  are uniquely determined for $k\ge 1$ and $n\ge \bar N_k$.
\end{prop}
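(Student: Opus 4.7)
The plan is strong induction on $k$, extracting the expansion data from the sequence $(v_n)$ itself by taking norms. First, passing to the limit in \eqref{b4} (say with $k=1$) and using \eqref{b0} together with the unit-norm bound on $w_n^{(1)}$ shows $v=\lim_{n\to\infty}v_n$, which clearly depends only on $(v_n)$. For the base case $k=1$, fix $n\ge \bar N_1$; by definition of $\bar N_k$ we have $\|w_n^{(1)}\|_Z=1$ and $\Gamma_{1,n}>0$, so \eqref{b4} reads $v_n-v=\Gamma_{1,n}w_n^{(1)}$. Taking $\|\cdot\|_Z$ on both sides forces $\Gamma_{1,n}=\|v_n-v\|_Z$; then $w_n^{(1)}=(v_n-v)/\Gamma_{1,n}$ is determined by division, and $w_1=\lim_{n\to\infty} w_n^{(1)}$ is determined by \eqref{b3}.

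For the inductive step, suppose that $v$, $w_1,\dots,w_{k-1}$ and the pairs $(\Gamma_{j,n},w_n^{(j)})$, for $1\le j<k$ and $n\ge \bar N_j$, have already been shown to be uniquely determined by $(v_n)$. Since $\bar N_k\ge \bar N_{k-1}$, for every $n\ge \bar N_k$ identity \eqref{b4} can be rewritten as
$$v_n-v-\sum_{j=1}^{k-1}\Gamma_{j,n}w_j=\Gamma_{k,n}w_n^{(k)},$$
whose left-hand side is, by the induction hypothesis, a quantity depending only on $(v_n)$. Because $\Gamma_{k,n}>0$ and $\|w_n^{(k)}\|_Z=1$ (the latter thanks to $n\ge N_k$, which follows from $n\ge \bar N_k$), the same norm argument gives
$$\Gamma_{k,n}=\Big\|v_n-v-\sum_{j=1}^{k-1}\Gamma_{j,n}w_j\Big\|_Z,$$
then $w_n^{(k)}$ by division, and finally $w_k=\lim_{n\to\infty}w_n^{(k)}$ via \eqref{b3}.

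I do not expect any serious obstacle: the argument is essentially algebraic, provided the unit-norm condition on $w_n^{(k)}$ is in force. The only delicate point is precisely the role of the threshold $\bar N_k$, which exists to guarantee $\|w_n^{(k)}\|_Z=1$; in Case \ref{d3} this can genuinely fail for $n<N_k$, as the construction \eqref{GWsmall} in the proof of Lemma \ref{mainlem} explicitly sets $\widetilde w_n^{(1)}=0$ when the residual vanishes, so no uniqueness can be expected there. In Case \ref{d2} the induction terminates at $k=K$, where \eqref{b6} already dictates $w_n^{(K)}=w_K$ for every $n\ge N_0$ and so introduces no additional ambiguity.
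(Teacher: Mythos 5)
Your proposal is correct and follows essentially the same route as the paper's proof: recover $v$ as $\lim_n v_n$, then recursively pin down $\Gamma_{k,n}=\bigl\|v_n-v-\sum_{j<k}\Gamma_{j,n}w_j\bigr\|_Z$ for $n\ge \bar N_k$ using the unit-norm condition on $w_n^{(k)}$, obtain $w_n^{(k)}$ by division and $w_k$ as its limit. The paper is only slightly more explicit about the trivial case and about why the termination index $K$ in Case (2) is itself uniquely determined, points you address in compressed form at the end.
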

\begin{proof}
Suppose \eqref{vsum} is a strict unitary expansion in a normed space $(Z,\|\cdot\|_Z)$ over $\mathbb C$ or $\mathbb R$.
Then $v$ is determined by the limit \eqref{vlim}.
If $v_n=v$ for all $n$, then this is the unique expansion for $v_n$.
Otherwise, for  {$n\ge \bar N_1$}, we have from \eqref{b4} that 
$$\|v_n-v\|_Z=\Gamma_{1,n}\|w_n^{(1)}\|_Z=\Gamma_{1,n},$$
hence $\Gamma_{1,n}$ is uniquely determined, and then
$w_n^{(1)}=(v_n-v)/\Gamma_{1,n}$ is uniquely determined. Subsequently, $w_1=\lim_{n\to\infty} w_n^{(1)}$ is uniquely determined.

Recursively, for $k\ge 1$, suppose $w_j$,  $w_n^{(j)}$ and $\Gamma_{j,n}$ are already uniquely determined for $1\le j\le k$ and  {$n\ge \bar N_j$}.  
If $w_n^{(k)}=w_k$ eventually as $n\to\infty$, then the expansion must fall to Case \ref{d2} in Definition \ref{uexp} and the number $K$ is $k$ which means $K$ is uniquely determined. 
In this case, $\bar N_j=N_0$ for all $1\le j\le K$.
Thus $v_n\approx v+\sum_{j=1}^{K}{\Gamma_{j,n}w_j}$ is the unique expansion of $v_n$.

Consider the case ``$w_n^{(k)}=w_k$ eventually as $n\to\infty$" does not hold. Then $w_{k+1}$, $\Gamma_{k+1,n}$, $w_n^{(k+1)}$ and  $\bar N_{k+1}$ exist. Let  {$n\ge \bar N_{k+1}$}. We have  {$n\ge \bar N_j$} for $1\le j\le k$, hence $\Gamma_{j,n}$ was already uniquely determined, and 
$$\left\|v_n-\sum_{j=1}^{k} \Gamma_{j,n}w_j\right \|_Z=\Gamma_{k+1,n}\|w_n^{(k+1)}\|_Z=\Gamma_{k+1,n}$$
which implies $\Gamma_{k+1,n}$ is uniquely determined. Thus, $w_n^{(k+1)}$ is uniquely determined for  {$n\ge \bar N_{k+1}$} by 
$$ w_n^{(k+1)} = \frac1{\Gamma_{k+1,n}}\left(v_n-\sum_{j=1}^{k} \Gamma_{j,n}w_j\right),$$
and then $w_{k+1}=\lim_{n\to\infty} w_n^{(k+1)}$  is uniquely determined.
\end{proof}

\subsection{Examples}\label{sueeg}
We present some examples of strict unitary expansions.
\begin{enumerate}[label=\rnum]
     \item\label{ea} Let $v_n\to v$ in $\mathbb R$. If $v_n>v$ for all $n\ge 1$ then $(v_n)_{n=1}^\infty$ has a unique strict unitary expansion 
     \begin{equation}\label{simex}
         v_n=v+\Gamma_{1,n}w_1,
     \end{equation}
     with $\Gamma_{1,n}=v_n-v$ and $w_1=1$.

     If $v_n<v$ for all $n\ge 1$ then $(v_n)_{n=1}^\infty$ has the unique strict unitary expansion \eqref{simex} with 
     $\Gamma_{1,n}=v-v_n$ and $w_1=-1$.

We can see that strict unitary expansions are not very useful in this case. 

\item\label{eb} Let $v_n=(-1)^n/n$ in $\mathbb R$. Suppose $(v_n)_{n=1}^\infty$ has a strict unitary expansion. Then the terms $\Gamma_{1,n}$, $w_1$ and $w_n^{(1)}$ exist in Definition \ref{uexp}. Clearly, $\Gamma_{1,n}=|v_n|=1/n$ and $w_n^{(1)}=v_n/\Gamma_{1,n}=(-1)^n$, which is divergent and contradicts Definition \ref{uexp}. Thus, $(v_n)_{n=1}^\infty$ does not have a strict unitary expansion.
However, its subsequences $v_{2n}=1/(2n)$ and $v_{2n+1}=-1/(2n+1)$  have finite strict unitary expansions as in part \ref{ea}.

 \item\label{ed} We give an example of infinite strict unitary expansions.
 Let $Z=\mathbb C$ and $v_n=e^{i/n}$. Clearly, we have the Taylor expansion
$v_n=1+\sum_{k=1}^\infty \frac{1}{k! n^k}i^k$.
However, the strict unitary expansion is a different one which is found below.
 Note, for $x\in \mathbb R$, that
$$e^{ix}=1+(e^{ix}-1)=1+e^{\frac{i}{2}x}(e^{\frac{i}{2}x}-e^{\frac{-i}{2}x})=1+2i\sin(x/2)e^{\frac{i}{2}x}.$$
Applying this formula repeatedly, we have
 \begin{align*}
     v_n&= 1+2i\sin\left (\frac{1}{2n}\right) e^{\frac{i}{2n}}  
     = 1+2i\sin\left (\frac{1}{2n}\right) +(2i)^2\sin\left (\frac{1}{2n}\right)\sin\left (\frac{1}{2^2n}\right) e^{\frac{i}{2^2n}} 
     =\ldots
 \end{align*}
and, for any $k\ge 1$,
\begin{equation}\label{vneg}
    v_n=1+\sum_{j=1}^k (2i)^j \prod_{p=1}^j \sin\left(\frac{1}{2^pn}\right)
+(2i)^{k+1}\prod_{p=1}^{k+1} \sin\left(\frac{1}{2^pn}\right) e^{\frac{i}{2^{k+1}n}}.
\end{equation}

Then we have the infinite strict unitary expansion \eqref{vsum} with
$$ w_k=i^k\text{ and }\Gamma_{k,n}=2^k \prod_{p=1}^k \sin\left(\frac{1}{2^pn}\right).$$ 
Indeed, letting $w_n^{(k)}=i^k e^{\frac{i}{2^{k}n}}$, one has $|w_k|=|w_n^{(k)}|=1$, $w_n^{(k)}\to w_k$,
$$\Gamma_{1,n}=2\sin\left(\frac{1}{2n}\right)\to 0\text{  and }
\frac{\Gamma_{k+1,n}}{\Gamma_{k,n}}=2 \sin\left(\frac{1}{2^{k+1}n}\right)\to 0\text{ as } n\to\infty.$$
Also, \eqref{vneg} implies \eqref{b4}. 
\end{enumerate}

\section{Proofs of main results} \label{mainproofs}

We present the proofs of Propositions \ref{Sprop}, \ref{v0eg}, \ref{vAinvg} and Theorem \ref{thm1}  in this section.
We recall that the assumptions throughout this section are $g\in \mathcal H\setminus\{0\}$, $\alpha_n>0$ for all $n$, $\lim_{n\to\infty}\alpha_n=\infty$, $v_n\in\mathcal H$ is a solution of \eqref{steady}, and $(v_n)_{n=1}^\infty$ has the strict unitary expansion \eqref{vsum} in $\mathcal H$.

\subsection{Proof of Proposition \ref{Sprop}}\label{pfprop}
\begin{proof}
In the case $(v_n)_{n=1}^\infty$ has a finite expansion, there are only finitely many sequences in \eqref{seqarray}. Hence,  both statements (i) and (ii) are obviously true. Consider below the case of an infinite expansion.

(i) Let $\mathcal{C}\in \widehat{\mathcal S}$.  If $\mathcal{C}$ contains more
than one sequence, it must contain one of the type $\sigma_{j,k}$.
Note that there can be at most one sequence equivalent to
$\sigma_{j,k}$ in each row of the array in \eqref{seqarray} up to
row $k+2$, i.e., the rows starting with $\sigma_0,\sigma_{0,0},\sigma_{1,1},\ldots,\sigma_{k,k}$. 
Also, $\sigma_{j,k}\succ\sigma$ for any $\sigma$ in the remaining rows.
Therefore, $\mathcal C$ has only finitely many elements.

(ii) Let $\mathcal{F}$ be a nonempty subset of $\widehat{\mathcal S}$. 
Consider the case when all $x$'s in $\mathcal F$ are of the type
$\langle\sigma_k\rangle$ for some $k\ge 0$. Let $k_0$ be the minimum of all such $k$'s.
Then $\langle\sigma_{k_0}\rangle$ is the minimum of $\mathcal F$.
Now, consider the case when there exists a sequence $\sigma_{j,k}$ such that
$\mathcal C=\langle\sigma_{j,k}\rangle\in \mathcal F$.
Define $\mathcal F_{\mathcal C}$ to be the set of $x\in\mathcal F$ such that $x\le \mathcal C$.
By \eqref{seqarray}, the intersection of $\mathcal F_{\mathcal C}$ with each $i$th-row, for $1\le i\le k+2$, starting with $\sigma_0,\sigma_{0,0},\sigma_{1,1},\ldots,\sigma_{k,k}$ is either empty or has a minimum $x_i$.
Also, the intersection of $\mathcal F_{\mathcal C}$ with  the remaining rows in \eqref{seqarray} is empty.
Since $\mathcal C\in \mathcal F_{\mathcal C}$, the set $\mathcal F_{\mathcal C}$ is not empty. 
Then the set of the above $x_i$'s is not empty. Comparing finitely many $x_i$'s, we find their minimum, which is the minimum of $\mathcal F_{\mathcal C}$, and hence of $\mathcal F$.
\end{proof}

\subsection{Proof of Theorem \ref{thm1}}\label{pf31}
\begin{proof} 

The facts about $\ord(\sigma_{0,0})$ and $\ord(\sigma)$ for  $\sigma\in \mathcal S\setminus\{\sigma_{0,0}\}$ in \ref{p0} are clear from the relations in \eqref{seqarray}.
Dividing \eqref{steady} by $\alpha_n$ and passing $n\to\infty$ yield \eqref{Bzero}.

 With $v_n=v$ and \eqref{Bzero}, we immediately have \eqref{Avg} from \eqref{steady}, giving \ref{p1}.

For \ref{p2},  we substitute the expansion $v_n=v+\Gamma_{1,n}w_n^{(1)}$ into the steady state equation \eqref{steady} and use property \eqref{Bzero} to obtain
    \begin{equation}\label{se1}
 Av-g + \alpha_n \Gamma_{1,n} \Bs(v,w_n^{(1)}) 
    + \alpha_n\Gamma_{1,n}\Gamma_{1,n}B(w_n^{(1)}, w_n^{(1)})=0 .       
    \end{equation}
For (a),(c) we can simply take $n\to \infty$ in \eqref{se1}, while for (b) we divide \eqref{se1} by $\alpha_n\Gamma_{1,n}$ and then take $n \to \infty$.  In each case the order of the sequences is clear.
    
    \medskip
For \ref{p3}, we note from the second row in \eqref{seqarray} that there are at least $k$ distinct numbers $\ord(\sigma_{0,j})$, for $0\le j\le k-1$, that are smaller than  $\ord(\sigma_{0,k})$.
Thus we obtain the first inequality in \eqref{zerok}. 

We prove the upper bound in \eqref{zerok} by contradiction. Assume the contrary, i.e.,
$$\ord(\sigma_{0,k}) > k(k+3)/2+1.$$
The possible entries $\sigma$ such that $\sigma \succ \sigma_{0,k}$ are $\sigma_0,\sigma_1,\ldots,\sigma_{k-1}$ along with 
$\sigma_{j,m}$ with $0\le j\le m\le k-1$, i.e., the triangle of sequences in \eqref{seqarray} with vertices $\sigma_{0,0}$,
$\sigma_{0,k-1}$, and $\sigma_{k-1,k-1}$.
Combining this fact with \eqref{orule}, we must have
\begin{equation}\label{in1}    
\ord(\{\sigma_0,\ldots,\sigma_{k-1},\sigma_{j,m}: 0\le j\le m\le k-1 \})\supset \{ 1,2,\ldots, k(k+3)/2+1\}.
\end{equation}

Because the first set has at most 
$$k  + k(k + 1)/2 = k(k + 3)/2$$ 
elements, the inclusion \eqref{in1} is impossible.

\medskip
The lower bound in \eqref{ojk} is obtained by using the chain
$$\sigma_{0,0}\succ \sigma_{0,1}\succ \ldots\succ \sigma_{0,k}
\succ \sigma_{1,k}\succ \ldots \succ \sigma_{j-1,k}\succ \sigma_{j,k}.$$

The proof of the upper bound in \eqref{ojk} is technical and will be presented in Appendix \ref{ap}.

\medskip
It remains to prove \ref{p5}. Using the chain $\sigma_{0,0}\succ\sigma_0\succ\ldots\succ\sigma_{k-1}\succ\sigma_k$, we immediately establish the lower bound of $\ord(\sigma_k)$ in \eqref{ok}. 
For the upper bound, denote $R=\{\sigma_m:m\ge 0\}$. By parts \ref{p3} and \ref{p4}, 
\begin{equation}\label{rest}
    \ord(\sigma)<\omega^2\text{ for all }\sigma\not\in R.
\end{equation}

Given $\sigma_k$. If there is $\sigma\not \in R$ such that $\sigma_k\succsim \sigma$, then 
$$\ord(\sigma_k)\le \ord(\sigma)<\omega^2<\omega^2+k.$$
Hence, we obtain the upper bound in \eqref{ok} for this case.

Consider the remaining case when $\sigma\succ \sigma_k$ for all $\sigma\not\in R$.
Suppose $\ord(\sigma_k)>\omega^2+k$.
By \eqref{orule} applied to $\sigma_*=\sigma_k$ and \eqref{rest}, we must have 
$$\ord(\{\sigma_0,\ldots,\sigma_{k-1}\})\supset \{\omega^2,\omega^2+1,\ldots,\omega^2+k\}.$$ 
Comparing the numbers of elements in these two sets gives $k\ge k+1$, which is impossible.
Therefore, we  must have $\ord(\sigma_k)\le \omega^2+k$, that is,  the upper bound of $\ord(\sigma_k)$ in \eqref{ok}.
\end{proof}

The following remarks on case \ref{p2} of Theorem \ref{thm1} are in order.
\begin{enumerate}
    \item Taking the scalar product of \eqref{rel1} in $H$ with $w_1$  gives $\langle B(w_1,w_1),v\rangle=0$, that is,
    $B(w_1,w_1)$ and $v$ are orthogonal in $H$.
    
    \item If, in addition, $w_2$ exists, then 
    instead of the first inequality in \eqref{os2} one has
       $$3\in\{\ord(\sigma_0),\ord(\sigma_{1,1}),\ord(\sigma_{0,2})\},$$
    and instead of \eqref{os1} one has
       $$3\in\{\ord(\sigma_1),\ord(\sigma_{1,1}),\ord(\sigma_{0,2})\}.$$
These can easily be seen from \eqref{seqarray}.
\end{enumerate}
\subsection{Proof of Proposition \ref{v0eg}}\label{pf32}

\begin{proof} We have $g\ne 0$, so $v_n\ne 0=v$. Hence, expansion \eqref{infser} is nontrivial. Consequently, $w_1$ exists and $|w_1|=1$.

Using $v_n=\Gamma_{1,n}w_n^{(1)}$, we have from \eqref{steady} that
\begin{equation}\label{t1}
    \Gamma_{1,n}Aw_n^{(1)}+\alpha_n \Gamma_{1,n}^2B(w_n^{(1)},w_n^{(1)})=g.
\end{equation}

If $\sigma_0\succ \sigma_{1,1}$, then passing $n\to\infty$ in \eqref{t1} gives $g=0$, a contradiction. Thus, we must have 
$\sigma_{1,1}\succsim \sigma_0$.

\medskip
We now prove $w_2$ exists by contradiction.
Suppose expansion \eqref{infser} stops at $w_1$, that is $v_n=\Gamma_{1,n}w_1$. Then
\begin{equation}\label{t2}
    \Gamma_{1,n}Aw_1+\alpha_n \Gamma_{1,n}^2B(w_1,w_1)=g.
\end{equation}

\medskip\noindent
\emph{Case A. $\sigma_{1,1}\succ \sigma_0$.} Dividing  \eqref{t2}  by $\alpha_n \Gamma_{1,n}^2$ and then passing $n\to\infty$ yield $B(w_1,w_1)=0$.
Using this fact in \eqref{t2} implies $\Gamma_{1,n}Aw_1=g$, which yields $g=0$, a contradiction.

\medskip\noindent
\emph{Case B. $\sigma_{1,1}\sim \sigma_0$.} 
By Lemma \ref{concat} and using a subsequential set of $\mathcal S$, but still denoted by $\mathcal S$, we assume \eqref{xn} and three possibilities (S1), (S2), (S3).
Passing $n\to\infty$ in \eqref{t2} gives 
\begin{equation}\label{t4}
    \lambda_* B(w_1,w_1)=g.
\end{equation} 

We can rewrite \eqref{t2} as
\begin{equation}\label{t3}
        \Gamma_{1,n}Aw_1=\chi_n g.
\end{equation}

\begin{enumerate}
    \item[($\alpha$)]\label{aa}  If $\chi_n=0$ for all $n$, or  $\Gamma_{1,n}\succ |\chi_n|$, then we infer from \eqref{t3} that $Aw_1=0$, which is a contradiction.
    \item[($\beta$)]\label{bb} If $|\chi_n|\succ \Gamma_{1,n}$, then $g=0$, a contradiction.
    \item[($\gamma$)]\label{gg} If $\Gamma_{1,n}\sim |\chi_n|$, then $\lambda_1 Aw_1=g$ for some $\lambda_1\ne 0$.
Combining this with \eqref{t4} gives
\begin{equation*}
    \lambda_1 Aw_1=\lambda_* B(w_1,w_1).
\end{equation*}
Taking scalar product of the last equation with $w_1$, we deduce $w_1=0$, a contradiction.
\end{enumerate}

\noindent 
Since both cases A and B yield contradictions, the expansion \eqref{infser} cannot stop at $w_1$. Thus, $w_2$ exists.

\medskip
Now, we can substitute the expansion $v_n=\Gamma_{1,n}w_1+\Gamma_{2,n}w_n^{(2)}$ into the steady state equation \eqref{steady}  to obtain
\begin{multline} \label{z2m}
\Gamma_{1,n}Aw_1+\Gamma_{2,n}Aw_n^{(2)} +\alpha_n\Gamma_{1,n}^2 B(w_1,w_1)
+\alpha_n \Gamma_{1,n}\Gamma_{2,n}\Bs(w_1,w_n^{(2)})\\
 +\alpha_n \Gamma_{2,n}^2B(w_n^{(2)},w_n^{(2)})=g.
\end{multline}
\begin{enumerate}[label=\tnum]
    \item \textit{Case $\sigma_{1,1} \succ \sigma_0$.} Since $\alpha_n\Gamma_{1,n}^2\to\infty$ we have also $\alpha_n\Gamma_{1,n}\to\infty$. Dividing \eqref{t1} by $\alpha_n\Gamma_{1,n}^2$ and take $n\to\infty$, we obtain 
$ B(w_1,w_1)=0$. 

If $1 \succ \alpha_n \Gamma_{1,n}\Gamma_{2,n}$, then all terms on the left in \eqref{z2m} tend to 0 as $n\to \infty$, which would mean $g=0$, a contradiction. Hence, $\sigma_{1,2}\succsim  \sigma_0$, and consequently, 
$\alpha_n \Gamma_{2,n}\to\infty$.

Having eliminating the term $\alpha_n\Gamma_{1,n}^2 B(w_1,w_1)$ from \eqref{z2m}, items \ref{f1} and \ref{f2} follow from dividing the equation by $\alpha_n \Gamma_{1,n}\Gamma_{2,n}$, passing $n\to\infty$ and applying both relations in \eqref{flip}. 

\item \textit{Case $\sigma_{1,1} \sim \sigma_0$.} We take $n \to \infty$ in \eqref{t1} and use the limit in \eqref{xn} to obtain  \eqref{t4}.

Using identity \eqref{t4}, we rewrite \eqref{z2m} as
\begin{align} \label{z3m}
\Gamma_{1,n}Aw_1+\Gamma_{2,n}Aw_n^{(2)}
+\alpha_n \Gamma_{1,n}\Gamma_{2,n}\Bs(w_1,w_n^{(2)})
+\alpha_n \Gamma_{2,n}^2B(w_n^{(2)},w_n^{(2)})=\chi_n g.
\end{align}
If $1\succ \alpha_n \Gamma_{2,n}$, then $\Gamma_{1,n}\succ \alpha_n \Gamma_{1,n}\Gamma_{2,n}\succ \alpha_n \Gamma_{2,n}^2$.  We next eliminate the three possibilities that follow.
\begin{enumerate}
 \item[($\alpha'$)]\label{aap}   If $\chi_n=0$ for all $n$,  or $\Gamma_{1,n}\succ |\chi_n|$, then \eqref{z3m} implies  $Aw_1=0$.
 
    \item[($\beta'$)]\label{bbp} If $|\chi_n|\succ \Gamma_{1,n}$, then \eqref{z3m} implies $g=0.$

    \item[($\gamma'$)]\label{ggp}  If $|\chi_n|\sim \Gamma_{1,n}$, then \eqref{z3m} implies 
$g=\lambda_1 Aw_1$, with $\lambda_1\ne 0.$
\end{enumerate}

Similar to ($\alpha$), ($\beta$), ($\gamma$) above, none of these cases  ($\alpha'$), ($\beta'$), ($\gamma'$)  are possible. 
Therefore, we must have $\alpha_n \Gamma_{2,n}\succsim 1$, that is, $\sigma_{0,2}\succsim \sigma_0$. 
It remains to consider the following cases.

\begin{enumerate}[label=\nnum]
    \item \emph{Case} $\sigma_{0,2}\sim \sigma_0$. Then $\Gamma_{1,n}\sim \alpha_n\Gamma_{1,n}\Gamma_{2,n}\succ \alpha_n \Gamma_{2,n}^2$.  If $|\chi_n|\succ \Gamma_{1,n}$, then \eqref{z3m} implies $g=0$, a contradiction. 
\begin{enumerate}[label=\rnum]
        \item If $|\chi_n|\sim \Gamma_{1,n}$, then, since 
 $w_n^{(2)}\to w_2$, equation \eqref{z3m} implies 
$$g=\lambda_1 Aw_1 +\lambda_2 \Bs(w_1,w_2),\quad \lambda_1\lambda_2> 0.$$
Thus
$$\lambda_* B(w_1,w_1)=\lambda_1 Aw_1 +\lambda_2 \Bs(w_1,w_2).$$
Taking scalar product with $w_1$ gives
$$\lambda_1\|w_1\|^2=\lambda_2 \langle B(w_1,w_1),w_2\rangle 
=(\lambda_2/\lambda_*) \langle g,w_2\rangle.$$
       \item If $\chi_n=0$ for all $n$, or $\Gamma_{1,n}\succ |\chi_n|$, then \eqref{z3m} implies 
 $$Aw_1 + \lambda_2 \Bs(w_1,w_2)=0,\quad \lambda_2> 0,$$
which yields 
$$ \|w_1\|^2 -\lambda_2 \langle B(w_1,w_1),w_2\rangle =0 \quad \text{and hence} \quad
(\lambda_2/\lambda_*) \langle g,w_2\rangle=\|w_1\|^2.$$    
    \end{enumerate}
    \item  \emph{Case} $\sigma_{0,2}\succ \sigma_0$. Then $\alpha_n \Gamma_{1,n}\Gamma_{2,n}\succ \Gamma_{1,n}$.  If $|\chi_n|\succ \alpha_n \Gamma_{1,n}\Gamma_{2,n}$, then \eqref{z3m} implies  $g=0$ again. 

\begin{enumerate}[label=\rnum]
    \item If $|\chi_n|\sim \alpha_n \Gamma_{1,n}\Gamma_{2,n}$. 
Then \eqref{z3m} yields $g=\lambda_2 \Bs(w_1,w_2)$ for some $\lambda_2\ne 0.$
Combining this with \eqref{t4} gives
\begin{equation}\label{z4m}
    \lambda_* B(w_1,w_1)=\lambda_2 \Bs(w_1,w_2).
\end{equation}

On the one hand, taking scalar product of equation \eqref{z4m} with $w_1$ yields
 $\langle B(w_1,w_1),w_2\rangle=0$, hence, $\langle g,w_2\rangle=0$.

On the other hand, taking scalar product of equation \eqref{z4m} with $w_2$ gives $\langle B(w_2,w_2),w_1\rangle=0$.

   \item  If $\chi_n=0$ for all $n$, or $\alpha_n \Gamma_{1,n}\Gamma_{2,n} \succ |\chi_n|$, then \eqref{z3m} gives
 \begin{equation}\label{z5m}
 \Bs(w_1,w_2)=0.     
 \end{equation}
Then, again, taking scalar product of \eqref{z5m} with $w_1$ yields $\langle g,w_2\rangle=0$, and taking scalar product of \eqref{z5m}  with $w_2$ gives $\langle B(w_2,w_2),w_1\rangle=0$.
\end{enumerate}
\end{enumerate}
\end{enumerate}
\end{proof}
\subsection{Proof of Proposition \ref{vAinvg}}\label{pf33}
\begin{proof} 
By \eqref{Bzero} in Theorem \ref{thm1}, we have $B(v,v)=0$.
 Substituting $v_n=v+\Gamma_{1,n}w_n^{(1)}$ into the steady state equation \eqref{steady}, we have
\begin{equation}\label{t5}
\Gamma_{1,n}Aw_n^{(1)}
+\alpha_n \Gamma_{1,n} \Bs(v,w_n^{(1)})  +\alpha_n \Gamma_{1,n}^2B(w_n^{(1)},w_n^{(1)})=0,    
\end{equation}
The largest term $\alpha_n \Gamma_{1,n} $ gives \eqref{B1}.

Suppose expansion \eqref{infser} stops at $w_1$, that is, $v_n=v+\Gamma_{1,n}w_1$.
We have \eqref{t5} with $w_1$ replacing $w_n^{(1)}$, and with the use of \eqref{B1}, we obtain
\begin{equation}\label{t6}
       Aw_1+ \alpha_n \Gamma_{1,n}B(w_1,w_1)=0.
\end{equation}

\begin{itemize}
    \item If $1\succ \alpha_n \Gamma_{1,n}$, then \eqref{t6} implies $Aw_1=0$, a contradiction.

    \item If $\alpha_n \Gamma_{1,n}\succ 1$, then \eqref{t6} implies $B(w_1,w_1)=0$, which, thanks to \eqref{t6} again, yields $Aw_1=0$, a contradiction.

    \item If $1\sim \alpha_n \Gamma_{1,n}$, then \eqref{t6} implies 
$$Aw_1+\lambda B(w_1,w_1)=0\text{ for some number }\lambda>0.$$
Taking the scalar product of this equation with $w_1$ yields $w_1=0$, a contradiction.
\end{itemize}

In conclusion, expansion \eqref{infser} cannot stop at $w_1$, hence $w_2$ exists.

Next we insert  $v_n=v+\Gamma_{1,n}w_1+\Gamma_{2,n}w_n^{(2)}$ into the state equation to obtain
\begin{equation}\label{t7}
\begin{aligned}
&    \Gamma_{1,n}Aw_1+\Gamma_{2,n}Aw_n^{(2)}
+\alpha_n \Gamma_{2,n} \Bs(v,w_n^{(2)}) 
+\alpha_n \Gamma_{1,n}^2B(w_1,w_1)\\
&
+\alpha_n \Gamma_{1,n}\Gamma_{2,n} \Bs(w_1,w_n^{(2)}) 
+\alpha_n \Gamma_{2,n}^2B(w_n^{(2)},w_n^{(2)})=0.  
\end{aligned}
\end{equation} 

For \ref{m1},  it follows \eqref{t7} that
\begin{enumerate}
    \item [] if $\Gamma_{1,n}\succ \alpha_n \Gamma_{2,n},\alpha_n \Gamma_{1,n}^2$, then $Aw_1=0$, so we immediately have $w_1=0$,
    \item [] if $\Gamma_{1,n}\sim \alpha_n \Gamma_{1,n}^2 \succ \alpha_n \Gamma_{2,n}$, then 
    $$Aw_1 +\lambda B(w_1,w_1)=0\quad \text{for some} \quad \lambda> 0.$$
    Thus taking the scalar product with $w_1$ and applying \eqref{flip}, we find that, again $w_1=0$.
\end{enumerate}

Therefore, these two cases are impossible. 

The results for the remaining items \ref{m2}--\ref{m6} follow by considering the largest sequence among $\Gamma_{1,n}$, $\alpha_n \Gamma_{2,n} $ and $\alpha_n \Gamma_{1,n}^2$ in equation \eqref{t7}.
\end{proof}

\section{Expanded force case} \label{expandsec}
In this section, with positive numbers $\alpha_n$'s as in Section \ref{results}, we consider a family of forces $(g_n)_{n=1}^\infty$ in $\mathcal H$ and a family of corresponding steady states $(v_n)_{n=1}^\infty\subset \mathcal H$ of the Galerkin approximation, i.e.,
\begin{equation}\label{eqforce}
Av_n+\alpha_n B(v_n,v_n)=g_n,
\end{equation}
where, we recall, $B$ is defined by \eqref{Bbar}.
We assume that $g_n$ is bounded in $\mathcal H$, so that $v_n$ is as well. 
Using Lemma \ref{mainlem} we can find subsequences of $v_n$'s and $g_n$'s, still denoted by $v_n$ and $g_n$, with the strict unitary expansions
\begin{equation}\label{genevg}
v_n\approx v+\sum_k \Gamma_{k,n}w_k,\quad
g_n\approx g+\sum_k  H_{k,n} h_k.
\end{equation}

Let $\mathcal S$ be defined as in section \ref{results}.
Denote 
\begin{equation*}%\label{tilS}
    \beta_k=(H_{k,n})_{n=1}^\infty\in\mathcal X\text{ and }
    \widetilde{\mathcal S}=\mathcal S \cup \{ \beta_k \text{ with valid integers $k$'s}\}.
\end{equation*}
Since we work with subsequences, by the countability of $\widetilde{\mathcal S}$ and the virtue of Lemma \ref{total} and \eqref{vsubsum}, we may assume that $\widetilde{\mathcal S}$ is totally comparable. Then we can apply the idea of the procedure \eqref{GP} in Section \ref{results} to equation \eqref{eqforce} and sequences in $\widetilde{\mathcal S}$.

\begin{thm}\label{gnthm} Assume $g\ne 0$ and the strict unitary expansion of $g_n$ in \eqref{genevg} is nontrivial. Then one has the following.
    \begin{enumerate}[label=\tnum]
        \item $B(v,v)=0$.
        
        \item The strict unitary expansion of $v_n$ in \eqref{genevg} is nontrivial.

        \item If $\alpha_n \Gamma_{1,n}\succ 1$, then 
        \begin{equation}\label{g63}
        \Bs(v,w_1)=0.    
        \end{equation}

       \item If $1\succ \alpha_n \Gamma_{1,n}$, then $\alpha_n \Gamma_{1,n}\succsim H_{1,n}$ and 
        \begin{align}\label{g64}
        &Av=g,\\   
        \label{g65}
        &\Bs(v,w_1)=\lambda_1  h_1\text{ where  }\lambda_1=\lim_{n\to\infty} \frac{H_{1,n}}{\alpha_n \Gamma_{1,n}}\in[0,\infty).
        \end{align}
        
        \item If $\alpha_n \Gamma_{1,n}\sim 1$, then
        \begin{equation}\label{g66}
            Av-g+\lambda_2 \Bs(v,w_1)=0, \text{ where }  \lambda_2=\lim_{n\to\infty} \alpha_n \Gamma_{1,n}>0.
        \end{equation}

        Let $\chi_n=\alpha_n \Gamma_{1,n}-\lambda_1$. Assume (S1)--(S3) in Section \ref{results} with $\widetilde{\mathcal S}$ replacing $\mathcal S$.
        \begin{enumerate}[label=\nnum]
            \item  If 
            \begin{equation}\label{gc1}
                \Gamma_{1,n}\succ H_{1,n}\text{ and }\lim_{n\to\infty} \frac{\chi_n}{\Gamma_{1,n}}=0,
            \end{equation}
            then $w_2$ exists and $\alpha_n \Gamma_{2,n}\succsim \Gamma_{1,n}$ and 
                \begin{equation}\label{g67}
                \lambda_3(Aw_1+\lambda_2 B(w_1,w_1))+\Bs(v,w_2)=0,\text{ where }\lambda_3=\lim_{n\to\infty} \frac{\Gamma_{1,n}}{\alpha_n \Gamma_{2,n}}\in[0,\infty).
                \end{equation}
                
            \item  If 
            \begin{equation}\label{gc2}
                H_{1,n} \succ \Gamma_{1,n}\text{ and }\lim_{n\to\infty} \frac{\chi_n}{H_{1,n}}=0,
            \end{equation} 
            then $w_2$ exists and $\alpha_n \Gamma_{2,n}\succsim H_{1,n}$ and 
                \begin{equation}\label{g68}
                    \Bs(v,w_2)=\lambda_4 h_1,\text{ where }\lambda_4=\lim_{n\to\infty} \frac{H_{1,n}}{\alpha_n \Gamma_{2,n}}\in [0,\infty).
                \end{equation}

            \item If $w_2$ exists and 
            \begin{equation}\label{gc3}
                \alpha_n\Gamma_{2,n}\succ \Gamma_{1,n},H_{1,n}\text{ and }\lim_{n\to\infty} \frac{\chi_n}{\alpha_n\Gamma_{2,n}}=0,
            \end{equation} 
            then $\Bs(v,w_2)=0$. 
            
            \item Assume $|\chi_n|\succ \Gamma_{1,n}, H_{1,n}$.
                If $w_2$ does not exist, then \eqref{g63} and \eqref{g64} hold.
                If $w_2$ exists and $|\chi_n| \succsim \alpha_n\Gamma_{2,n}$, then 
                \begin{equation}\label{g69}
                    \Bs(v,w_1)+\lambda_5 \Bs(v,w_2)=0,\text{ where } \lambda_5=\lim_{n\to\infty} \frac{\alpha_n\Gamma_{2,n}}{\chi_n}\in \mathbb R.
                \end{equation}

            \item Assume $\Gamma_{1,n} \sim H_{1,n}\sim |\chi_n|$. If $w_2$ does  not exist then
            \begin{equation}\label{g70}
                Aw_1+\lambda_6 \Bs(v,w_1)+\lambda_2 B(w_1,w_1)=\lambda_7 h_1,
            \end{equation}
            where 
            $$\lambda_6=\lim_{n\to\infty} \frac{\chi_n}{\Gamma_{1,n}}\in\mathbb R,
            \quad \lambda_7=\lim_{n\to\infty} \frac{H_{1,n}}{\Gamma_{1,n}}>0.$$
            
            If $w_2$ exists and $\Gamma_{1,n} \succsim \alpha_n\Gamma_{2,n}$, then 
            \begin{equation}\label{g71}
                Aw_1+\lambda_6 \Bs(v,w_1)+\lambda_2 B(w_1,w_1)+\lambda_8 \Bs(v,w_2)=\lambda_7 h_1,
            \end{equation}
             where 
            $$\lambda_8=\lim_{n\to\infty} \frac{\alpha_n \Gamma_{2,n}}{\Gamma_{1,n}}\in[0,\infty).$$
        \end{enumerate}
    \end{enumerate}
\end{thm}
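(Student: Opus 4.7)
My approach is to apply the General Procedure \eqref{GP} to the enlarged set $\widetilde{\mathcal S}$: substitute the strict unitary expansions of $v_n$ and $g_n$ from \eqref{genevg} into \eqref{eqforce}, identify the dominant sequence in each regime using the total comparability of $\widetilde{\mathcal S}$, divide by it, and pass to the limit. The structural backbone is the identity $B(v,v)=0$, which clears the leading $\sigma_{0,0}$-order contribution, and, in case (v), the identity \eqref{g66}, which additionally cancels the $\sigma_0$- and $\sigma_{0,1}$-order contributions $Av-g$ and $\alpha_n\Gamma_{1,n}\Bs(v,w_1)$.

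I would begin with (i) by dividing \eqref{eqforce} by $\alpha_n\to\infty$ and using boundedness of $Av_n,g_n$ together with continuity of $B$ on $\mathcal H$. For (ii), if $v_n\equiv v$ then (i) and \eqref{eqforce} force $g_n=Av$ identically, contradicting the nontriviality of the $g_n$ expansion. Substituting $v_n=v+\Gamma_{1,n}w_n^{(1)}$ and using $B(v,v)=0$ yields the first working equation
\begin{equation*}
\Gamma_{1,n}Aw_n^{(1)}+\alpha_n\Gamma_{1,n}\Bs(v,w_n^{(1)})+\alpha_n\Gamma_{1,n}^2 B(w_n^{(1)},w_n^{(1)})=g_n-Av.
\end{equation*}
Part (iii) is then immediate from division by $\alpha_n\Gamma_{1,n}\to\infty$ and boundedness of $g_n$. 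For (iv), passing to the limit gives $Av=g$ directly; the ordering $\alpha_n\Gamma_{1,n}\succsim H_{1,n}$ follows by contradiction, since if $H_{1,n}\succ \alpha_n\Gamma_{1,n}$ then division of the above equation by $H_{1,n}$ sends every LHS ratio to $0$ while $(g_n-g)/H_{1,n}\to h_1\ne 0$. A final division by $\alpha_n\Gamma_{1,n}$ produces \eqref{g65}. The main identity \eqref{g66} in (v) is again the direct limit, now using $\alpha_n\Gamma_{1,n}\to\lambda_2>0$.

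For the five sub-cases of (v), I would insert the sharper expansion $v_n=v+\Gamma_{1,n}w_1+\Gamma_{2,n}w_n^{(2)}$ (setting $\Gamma_{2,n}w_n^{(2)}=0$ when $w_2$ is absent) into \eqref{eqforce}, write $\alpha_n\Gamma_{1,n}=\lambda_2+\chi_n$, and use \eqref{g66} to eliminate the $\sigma_0$- and $\sigma_{0,1}$-order terms. The refined equation has leading LHS contributions $\chi_n\Bs(v,w_1)$, $\Gamma_{1,n}(Aw_1+\lambda_2 B(w_1,w_1))$, $\Gamma_{2,n}Aw_n^{(2)}$, $\alpha_n\Gamma_{2,n}\Bs(v,w_n^{(2)})$, plus strictly smaller cross terms in $\alpha_n\Gamma_{1,n}\Gamma_{2,n}$ and $\alpha_n\Gamma_{2,n}^2$, while the RHS has leading order $H_{1,n}h_1$. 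In each sub-case the hypothesis singles out one dominant scale ($\alpha_n\Gamma_{2,n}$ in (1)--(3), $|\chi_n|$ in (4), and $\Gamma_{1,n}$ in (5)); dividing by it and discarding subdominant contributions by total comparability of $\widetilde{\mathcal S}$ produces the identities \eqref{g67}--\eqref{g71}. In (1) and (2), the existence of $w_2$ is first established by contradiction: were $v_n=v+\Gamma_{1,n}w_1$, division by $\Gamma_{1,n}$ (respectively $H_{1,n}$) would force $Aw_1+\lambda_2 B(w_1,w_1)=0$, and pairing with $w_1$ in $H$ via \eqref{flip} would give $\|w_1\|^2=0$, contradicting $|w_1|=1$. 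The same pairing argument proves $\alpha_n\Gamma_{2,n}\succsim\Gamma_{1,n}$ or $\alpha_n\Gamma_{2,n}\succsim H_{1,n}$ in the respective sub-cases.

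The principal obstacle is combinatorial bookkeeping: with four ``first-order'' scales $\Gamma_{1,n}$, $H_{1,n}$, $|\chi_n|$, $\alpha_n\Gamma_{2,n}$ and several lower-order cross terms, one must exhaustively verify that every putatively smaller term is indeed subdominant relative to the chosen pivot scale in each sub-case. Total comparability of $\widetilde{\mathcal S}$ reduces all comparisons to computable limits of ratios, but the enumeration of admissible orderings together with the systematic elimination of impossible configurations via the $\|w_1\|^2=0$ trick is the bulk of the labor. A minor notational remark: reading (v) in context, $\chi_n$ must denote the convergence error $\alpha_n\Gamma_{1,n}-\lambda_2$, so that hypotheses such as $\chi_n/\Gamma_{1,n}\to 0$ and (S1)--(S3) are consistent.
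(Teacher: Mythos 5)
Your proposal follows essentially the same route as the paper's proof: divide \eqref{eqforce} by $\alpha_n$ for (i); rule out a constant $v_n$ via (i) and the nontriviality of the $g_n$-expansion for (ii); substitute the one-term expansion $v_n=v+\Gamma_{1,n}w_n^{(1)}$, divide by the dominant scale and pass to the limit for (iii)--(v); then use \eqref{g66} to replace $Av-g+\alpha_n\Gamma_{1,n}\Bs(v,w_n^{(1)})$ by $\chi_n\Bs(v,w_n^{(1)})$ with $\chi_n=\alpha_n\Gamma_{1,n}-\lambda_2$ (you correctly read the statement's ``$\lambda_1$'' as a typo), insert the two-term expansion, and divide by the pivot scale in each sub-case, exactly as the paper does, including the $\langle\cdot,w_1\rangle$ pairing to exclude $Aw_1+\lambda_2 B(w_1,w_1)=0$.

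One detail is off, though easily repaired. In sub-case (v)(2), dividing the one-term equation by $H_{1,n}$ does \emph{not} force $Aw_1+\lambda_2 B(w_1,w_1)=0$: under \eqref{gc2} the coefficient of $Aw_1$ is $\Gamma_{1,n}/H_{1,n}\to 0$ and likewise $\alpha_n\Gamma_{1,n}^2/H_{1,n}\sim\Gamma_{1,n}/H_{1,n}\to 0$, so the limit equation is $0=h_1$, which already contradicts $|h_1|=1$; no pairing with $w_1$ is needed. The same correction applies to your claim that the ``pairing argument'' yields $\alpha_n\Gamma_{2,n}\succsim H_{1,n}$: if $H_{1,n}\succ\alpha_n\Gamma_{2,n}$, dividing the two-term equation by $H_{1,n}$ again gives $h_1=0$, which is how the paper argues (the pairing trick is the right mechanism only in sub-case (1), where the pivot is $\Gamma_{1,n}$). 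Also, in (v)(4) with no $w_2$, the conclusion \eqref{g64} comes from combining the limit identity $\Bs(v,w_1)=0$ with \eqref{g66}; your sketch leaves this (trivial) step implicit. With these touch-ups your argument coincides with the paper's proof.
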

\begin{proof}
We write
    \begin{equation}\label{gp1}
        g_n=g+H_{1,n}h_n^{(1)}\text{ with } \lim_{n\to\infty} h_n^{(1)}=h_1\ne 0.
    \end{equation}

\medskip\noindent
Part (i).     Dividing \eqref{eqforce} by $\alpha_n$ and letting $n\to\infty$ give $B(v,v)=0$.

\medskip\noindent
Part (ii). Suppose $v_n=v$ for all $n$. Then, together with part (i), we have from \eqref{eqforce} that  $Av-g=H_{1,n}h_{n}^{(1)}$.
This yields $Av-g=0=H_{1,n}h_n^{(1)}$. The last identity contradicts the nonzero limit in \eqref{gp1}.

\medskip\noindent
Part (iii). Assume $\alpha_n \Gamma_{1,n}\succ 1$.
Substituting $v_n=v+\Gamma_{1,n}w_n^{(1)}$ and  \eqref{gp1} into \eqref{eqforce} and using part (i) give
\begin{equation}\label{gp2}
    (Av-g)+\Gamma_{1,n}Aw_n^{(1)} + \alpha_n \Gamma_{1,n} \Bs(v,w_n^{(1)} )+\alpha_n \Gamma_{1,n}^2 B(w_n^{(1)} ,w_n^{(1)} )
    =H_{1,n}h_n^{(1)}.
\end{equation}
Dividing this equation by $\alpha_n\Gamma_{1,n}$ and letting $n\to\infty$, we obtain \eqref{g63}.

\medskip\noindent
Part (iv). Assume $1\succ \alpha_n \Gamma_{1,n}$. Letting $n\to\infty$ in \eqref{gp2} yields \eqref{g64}.
Now, \eqref{gp2} becomes
\begin{equation}\label{gp3}
\Gamma_{1,n}Aw_n^{(1)} + \alpha_n \Gamma_{1,n} \Bs(v,w_n^{(1)} )+\alpha_n \Gamma_{1,n}^2 B(w_n^{(1)} ,w_n^{(1)} )
    =H_{1,n}h_n^{(1)}.
\end{equation}

Suppose $H_{1,n}\succ \alpha_n \Gamma_{1,n}$. 
By dividing \eqref{gp3} by $H_{1,n}$ and letting $n\to\infty$, we obtain $h_1=0$, which is a contradiction. Thus $\alpha_n \Gamma_{1,n}\succsim H_{1,n}$.  Dividing \eqref{gp3} by $\alpha_n \Gamma_{1,n}$ and letting $n\to\infty$, we obtain \eqref{g63}.

\medskip\noindent
Part (v). Assume $\alpha_n \Gamma_{1,n}\sim 1$. Passing $n\to\infty$ in \eqref{gp2} gives \eqref{g65} directly.

(1) Assume \eqref{gc1}. Suppose $w_n^{(1)}=w_1$ for all $n$. We rewrite \eqref{gp2} using \eqref{g66} as
\begin{equation}\label{gp4}
    \Gamma_{1,n} Aw_1+\chi_n \Bs(v,w_1)+\alpha_n \Gamma_{1,n}^2 B(w_1,w_1)=H_{1,n} h_n^{(1)}.
\end{equation}

Note that $\Gamma_{1,n}\sim \alpha_n \gamma_{1,n}^2$. 
Dividing \eqref{gp4} by $\Gamma_{1,n}$, passing $n\to\infty$ and using condition \eqref{gc1}, we obtain 
\begin{equation}\label{gp5}
    Aw_1+\lambda_1 \Bs(w_1,w_1)=0.
\end{equation}
This implies $w_1=0$, a contradiction. 
Therefore, $w_2$ exists. We rewrite \eqref{gp2} using \eqref{g66} and the fact $\Gamma_1 w_n^{(1)}=\Gamma_{1,n} w_1+\Gamma_{2,n} w_n^{(2)}$ as
\begin{equation}\label{gp6}
\begin{aligned}
&    \Gamma_{1,n}Aw_1+\Gamma_{2,n}Aw_n^{(2)}
+\chi_n \Bs(v,w_1) 
+\alpha_n \Gamma_{2,n} \Bs(v,w_n^{(2)}) 
+\alpha_n \Gamma_{1,n}^2B(w_1,w_1)\\
&
+\alpha_n \Gamma_{1,n}\Gamma_{2,n} \Bs(w_1,w_n^{(2)}) 
+\alpha_n \Gamma_{2,n}^2B(w_n^{(2)},w_n^{(2)})=H_{1,n}h_n^{(1)}.  
\end{aligned}
\end{equation} 

We focus on the terms with coefficients $\Gamma_{1,n}$, $\chi_n $, $\alpha_n \Gamma_{2,n} $, $H_{1,n}$ in \eqref{gp6}.

If $\Gamma_{1,n}\succ \alpha_n \Gamma_{2,n}$, then dividing \eqref{gp6} by $\Gamma_{1,n}$ and letting $n\to\infty$, we obtain \eqref{gp5} again which yields a contradiction. Thus, $\alpha_n \Gamma_{2,n}\succsim \Gamma_{1,n}$.
Now,  dividing \eqref{gp6} by $\alpha_n \Gamma_{2,n}$ and letting $n\to\infty$, we obtain \eqref{g67}. 

(2) Assume \eqref{gc2}. If $w_n^{(1)}=w_1$ for all $n$, then dividing \eqref{gp4} by $H_{1,n}$ and passing $n\to\infty$ give $h_1=0$, a contradiction. Thus $w_2$ exists. Similar to the rest of part (1), by dividing \eqref{gp6} by $H_{1,n}$ we derive 
$\alpha_n \Gamma_{2,n}\succsim H_{1,n}$, and then by dividing \eqref{gp6} by $\alpha_n \Gamma_{2,n}$ we obtain \eqref{g68}.

(3) Assume $w_2$ exists and \eqref{gc3}. Then dividing \eqref{gp6} by $\alpha_n \Gamma_{2,n}$ and letting $n\to\infty$, we obtain $\Bs(v,w_2)=0$.

(4) Assume $|\chi_n|\succ \Gamma_{1,n}, H_{1,n}$.
                If $w_2$ does not exists, then \eqref{gp4} implies \eqref{g63}. This and \eqref{g66} then imply \eqref{g64}.
If $w_2$ exists and $|\chi_n| \succsim \alpha_n\Gamma_{2,n}$, then dividing \eqref{gp5} by $\chi_n$ and letting $n\to\infty$ yield \eqref{g69}.

(5) Similar to part (4), dividing \eqref{gp4} and \eqref{gp5} by $\Gamma_{1,n}$ yield \eqref{g70} and \eqref{g71}, respectively.
\end{proof}

We observe that the procedure \eqref{GP} in Section \ref{results}, in fact, does not require $|w_k|=|w_n^{(k)}|=1$. 
This prompts us to consider expansions of a more general class than those satisfying Definition \ref{uexp}.

\begin{defn}\label{uexp2}
Let $(v_n)_{n=1}^\infty$ be a sequence in a normed space $(Z,\|\cdot\|_Z)$ over $\mathbb C$ or $\mathbb R$. 
\begin{itemize}
    \item We say the sequence $(v_n)_{n=1}^\infty$ has a \emph{unitary expansion} if, in Definition \ref{uexp}, condition $\|w_n^{(k)}\|_Z=1$ is removed.

    \item We say the sequence $(v_n)_{n=1}^\infty$  has a \emph{pre-unitary expansion} if in Definition \ref{uexp}, condition $\|w_k\|_Z=1$ is replaced with $w_k\ne 0$, and condition $\|w_n^{(k)}\|_Z=1$ is removed.  
\end{itemize}
\end{defn}

We still use \eqref{vsum}  to denote the asymptotic expansions in Definition \ref{uexp2}. 

\begin{obs}\label{propec}
Suppose $(v_n)_{n=N_0}^\infty$ has a pre-unitary expansion \eqref{vsum}.
Then it has the following  unitary expansion
\begin{equation}\label{vconvert}
    v_n\approx v+\sum_k \widehat\Gamma_{k,n}\widehat w_k,\text{ where } 
    \widehat w_k=\|w_k\|_Z^{-1} w_k,\ \widehat\Gamma_{k,n}=\|w_k\|_Z \Gamma_{k,n}.
\end{equation}

Indeed, let $\widehat w_n^{(k)} =\|w_k\|_Z^{-1} w_n^{(k)}$.
Clearly, $\|\widehat w_k\|_Z=1$,  $\widehat \Gamma_{k,n}\to 0$ and  
$$\widehat\Gamma_{k+1,n}/\widehat\Gamma_{k,n}=(\|w_{k+1}\|_Z/\|w_k\|_Z) \Gamma_{k+1,n}/\Gamma_{k,n}\to 0\text{ as }n\to\infty.$$
Moreover, one has, for $n\ge N_0$ and $k\ge 1$, that $\Gamma_{k,n} w_k=\widehat \Gamma_{k,n}\widehat w_k$ and $\Gamma_{k,n} w_n^{(k)}=\widehat \Gamma_{k,n}\widehat w_n^{(k)}$.
    For $n\ge N_0$ and $m\ge 1$, we write
    \[v_n=v+\sum_{k=1}^{m-1} \Gamma_{k,n} w_k+\Gamma_{k,n} w_n^{(m)}
    =v+\sum_{k=1}^{m-1} \widehat\Gamma_{k,n}\widehat w_k+\widehat\Gamma_{k,n} \widehat w_n^{(m)}.\]
Since $\widehat w_n^{(m)}$ converges to $\|w_m\|_Z^{-1}w_m =\widehat w_m$  as $n\to\infty$,
we have the unitary expansion  \eqref{vconvert}.

Therefore, any pre-unitary expansion can be converted to a unitary one by a (natural) scaling specified in \eqref{vconvert}.
\end{obs}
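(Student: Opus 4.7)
The plan is a direct rescaling argument: given the pre-unitary expansion $v_n \approx v + \sum_k \Gamma_{k,n} w_k$, I will define $\widehat w_k := w_k/\|w_k\|_Z$, $\widehat\Gamma_{k,n} := \|w_k\|_Z \, \Gamma_{k,n}$, and $\widehat w_n^{(k)} := w_n^{(k)}/\|w_k\|_Z$, and then verify that these data satisfy the conditions of a unitary expansion in the sense of Definition \ref{uexp2} (that is, the list in Definition \ref{uexp} with the requirement $\|w_n^{(k)}\|_Z = 1$ dropped, while $\|\widehat w_k\|_Z = 1$ is retained). The rescaling is well-defined precisely because in a pre-unitary expansion one has $w_k \neq 0$ for every valid index $k$.

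First I will record the pointwise algebraic identity: since $\widehat\Gamma_{k,n}\widehat w_k = \Gamma_{k,n} w_k$ and $\widehat\Gamma_{k,n}\widehat w_n^{(k)} = \Gamma_{k,n} w_n^{(k)}$, every partial-sum identity \eqref{b4} in the original expansion rewrites term-by-term in hatted variables without changing the value of $v_n$. Next I will check the limit conditions one at a time. Unit norm $\|\widehat w_k\|_Z = 1$ is immediate, and $\widehat\Gamma_{k,n}>0$; the limit $\widehat\Gamma_{1,n}\to 0$ follows because $\|w_1\|_Z$ is a fixed positive constant and $\Gamma_{1,n}\to 0$; the ratio $\widehat\Gamma_{k+1,n}/\widehat\Gamma_{k,n} = (\|w_{k+1}\|_Z/\|w_k\|_Z)\, \Gamma_{k+1,n}/\Gamma_{k,n}$ tends to zero because the first factor is a fixed positive constant and the second tends to zero by hypothesis; finally, $\widehat w_n^{(k)}\to \widehat w_k$ follows by dividing the assumed convergence $w_n^{(k)}\to w_k$ by the nonzero constant $\|w_k\|_Z$.

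I will then treat Cases \ref{d2} and \ref{d3} of Definition \ref{uexp} uniformly. In the finite case with terminal index $K$, the relation $w_n^{(K)} = w_K$ for all $n \geq N_0$ transfers directly to $\widehat w_n^{(K)} = \widehat w_K$. In the infinite case no such terminal relation is required, and the verifications above are carried out separately at each index $k \geq 1$ with the integers $N_k$ unchanged. Since no step in the verification requires $\|w_n^{(k)}\|_Z = 1$ or $\|\widehat w_n^{(k)}\|_Z = 1$, the output is a unitary, not necessarily strict unitary, expansion, which is exactly what the observation asserts.

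There is no substantial obstacle in the argument; the only point requiring any care is the bookkeeping distinction between the three notions at play, namely the pre-unitary expansion (where $w_k \neq 0$ and $\|w_n^{(k)}\|_Z$ is arbitrary), the unitary expansion (requiring $\|\widehat w_k\|_Z = 1$ but still allowing arbitrary $\|\widehat w_n^{(k)}\|_Z$), and the strict unitary expansion of Definition \ref{uexp} (which further demands $\|\widehat w_n^{(k)}\|_Z = 1$). The rescaling by $\|w_k\|_Z$ restores unit norms for the limit vectors $w_k$ but does nothing to normalize the tails $w_n^{(k)}$, which is precisely why the produced expansion lands in the unitary class rather than the strict unitary class.
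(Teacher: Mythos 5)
Your proposal is correct and follows essentially the same route as the paper: the identical rescaling $\widehat w_k=\|w_k\|_Z^{-1}w_k$, $\widehat\Gamma_{k,n}=\|w_k\|_Z\Gamma_{k,n}$, $\widehat w_n^{(k)}=\|w_k\|_Z^{-1}w_n^{(k)}$, the same term-by-term rewriting of the partial-sum identities \eqref{b4}, and the same limit verifications, with your explicit handling of the terminal relation in Case \ref{d2} being an implicit step in the paper's argument. No gaps; the remark about landing in the unitary rather than strict unitary class is exactly the point of the observation.
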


\begin{obs}\label{finrmk} We have the following remarks on finite unitary expansions.
\begin{enumerate}[label=\rnum]
    \item 
Suppose $(v_n)_{n=N_0}^\infty$ has a finite unitary expansion. Then taking $k=K$ in \eqref{b4} and using \eqref{b6}, we obtain
\begin{equation}\label{finsum}
    v_n=v+\Gamma_{1,n}w_1+\ldots +\Gamma_{K,n}w_K, \text{ for all } n\ge N_0,
\end{equation}

Now, suppose we have \eqref{finsum} 
with $\Gamma_{j,n}$ and $w_j$, for $1\le j\le K$, being as in Definition \ref{uexp}.     
Then the sum in \eqref{finsum} is a unitary expansion of $(v_n)_{n=N_0}^\infty$. Indeed, we have \eqref{b4} with
$$w_n^{(k)}=w_k+\sum_{j=k+1}^K \frac{\Gamma_{j,n}}{\Gamma_{k,n}}w_j.$$
Clearly, $w_n^{(k)}\to w_k$ as $n\to\infty$.

\item In the case of a finite unitary expansion \eqref{finsum} in a finite dimensional real linear space $Z$, by taking $N_0$ sufficiently large, we may assume $K\le {\rm dim}(Z)$.
Indeed, assume we have \eqref{finsum} for some $K>{\rm dim}(Z)$.
Suppose $w_1,\ldots, w_k$ are linearly independent and $w_{k+1}\in \text{span}\{ w_1,\ldots, w_k\}$. Then$$
 w_{k+1}=c_1 w_1+\cdots + c_k w_k \text{ for some }c_1,c_2,\ldots,c_k\in\mathbb R.
 $$
Setting $\tilde\Gamma_{j,n}=\Gamma_{j,n}+c_j\Gamma_{k+1,n}$, for $1\le j\le k$, we have
\begin{align*}
 \Gamma_{1,n}w_1+ \cdots + \Gamma_{k,n}w_k+\Gamma_{k+1,n}w_{k+1}
&= \tilde\Gamma_{1,n}w_1+ \cdots + \tilde \Gamma_{k,n}w_k
\end{align*}
For large enough $n$, we have $\tilde\Gamma_{j,n} > 0$, and  
$$\tilde\Gamma_{j+1,n}/\tilde\Gamma_{j,n} \to 0 \quad \text{as} \ n \to \infty,
\quad \text{for} \ j=1,\ldots k-1. $$

Repeating the above argument finitely many times starting with $k=1$, we obtain \eqref{finsum} with newly defined $\Gamma_{j,n}$, re-indexed $w_j$ and new $K\le {\rm dim}(Z)$.
\end{enumerate}
\end{obs}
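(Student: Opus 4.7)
The plan is to justify the two assertions of Remark~\ref{finrmk} separately; both reduce to careful bookkeeping with the ratio condition on the $\Gamma_{k,n}$.

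For part~\tnum, the forward direction is immediate: a finite unitary expansion, by Definition~\ref{uexp}\ref{d2}, gives \eqref{b4} for every $1\le k\le K$, and taking $k=K$ together with \eqref{b6} yields \eqref{finsum}. For the converse, assume \eqref{finsum} holds with $\Gamma_{j,n}>0$ and $w_j\ne 0$ as in Definition~\ref{uexp}. I would define, for $1\le k\le K$,
\[
w_n^{(k)}=w_k+\sum_{j=k+1}^{K}\frac{\Gamma_{j,n}}{\Gamma_{k,n}}w_j,
\]
with the convention that the sum is empty when $k=K$, so that $w_n^{(K)}=w_K$ and \eqref{b6} holds. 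Then I would verify the required properties one at a time: the hypothesis $\Gamma_{j+1,n}/\Gamma_{j,n}\to 0$ telescopes to $\Gamma_{j,n}/\Gamma_{k,n}\to 0$ for every $j>k$, so $w_n^{(k)}\to w_k$, giving \eqref{b3}; and a direct computation shows that $\Gamma_{k,n}w_n^{(k)}=\sum_{j=k}^{K}\Gamma_{j,n}w_j$, which, together with the truncated sum $v+\sum_{j=1}^{k-1}\Gamma_{j,n}w_j$, recovers \eqref{b4}. One notes that Definition~\ref{uexp2} drops the normalization of $w_n^{(k)}$, so no further scaling is needed at this stage.

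For part~\tnum, the strategy is iterated reduction. Starting from \eqref{finsum} with $K>\dim(Z)$, I would let $k$ be the largest integer with $w_1,\ldots,w_k$ linearly independent; then $k\le\dim(Z)<K$, and $w_{k+1}=c_1w_1+\cdots+c_kw_k$ for some $c_j\in\mathbb R$. Substituting and setting $\tilde\Gamma_{j,n}=\Gamma_{j,n}+c_j\Gamma_{k+1,n}$ for $1\le j\le k$ yields
\[
v_n=v+\sum_{j=1}^{k}\tilde\Gamma_{j,n}w_j+\sum_{j=k+2}^{K}\Gamma_{j,n}w_j.
\]
Since $\Gamma_{k+1,n}/\Gamma_{j,n}\to 0$ for every $j\le k$ (by the telescoping argument above), we get $\tilde\Gamma_{j,n}/\Gamma_{j,n}\to 1$; hence for large $n$ each $\tilde\Gamma_{j,n}$ is positive, tends to $0$, and satisfies $\tilde\Gamma_{j+1,n}/\tilde\Gamma_{j,n}\to 0$. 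At the junction between the two blocks, $\Gamma_{k+2,n}/\tilde\Gamma_{k,n}=(\Gamma_{k+2,n}/\Gamma_{k,n})\cdot(1/(1+c_k\Gamma_{k+1,n}/\Gamma_{k,n}))\to 0$ for the same reason. Re-indexing $w_{k+2},\ldots,w_K$ as $w_{k+1},\ldots,w_{K-1}$ and the corresponding coefficients, and applying part~\tnum already proved, we obtain a shorter finite unitary expansion of the same form with $K$ decreased by one. Iterating at most $K-\dim(Z)$ times (with $N_0$ enlarged as needed so that every $\tilde\Gamma_{j,n}$ stays positive) produces an expansion with at most $\dim(Z)$ terms.

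The main (though modest) obstacle is the ratio bookkeeping at the splicing step, specifically checking that the new sequence $\tilde\Gamma_{k,n}$ dominates $\Gamma_{k+2,n}$ in the sense of $\Gamma_{k+2,n}/\tilde\Gamma_{k,n}\to 0$; this is guaranteed by $\tilde\Gamma_{k,n}/\Gamma_{k,n}\to 1$, but it is the one place where the perturbation introduced by absorbing the dependent vector $w_{k+1}$ could in principle destroy the strict ordering \eqref{b1}, so it deserves explicit verification.
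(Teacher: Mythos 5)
Your proposal is correct and follows essentially the same route as the paper: the same formula $w_n^{(k)}=w_k+\sum_{j=k+1}^K (\Gamma_{j,n}/\Gamma_{k,n})w_j$ in part (i), and the same absorption of the first dependent vector via $\tilde\Gamma_{j,n}=\Gamma_{j,n}+c_j\Gamma_{k+1,n}$ with iteration in part (ii). Your explicit check of the junction ratio $\Gamma_{k+2,n}/\tilde\Gamma_{k,n}\to 0$ is a welcome detail the paper leaves implicit, but it does not change the argument.
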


A simple example for unitary expansions is the power series in Banach spaces, namely,
\[ v_n=v+\sum_{k=1}^\infty \lambda^{kn} w_k,\text{ for some number $\lambda\in(0,1)$ and unit vectors $w_k$'s.}\] 

This can be generalized as follow.

\begin{lem}\label{convlem}
Let $\theta_n>0$ and $\theta_n\to 0$ as $n\to\infty$.
Suppose $(w_n)_{n=0}^\infty$ is a sequence in a Banach space $(Z,\|\cdot\|_Z)$ over $\mathbb C$ or $\mathbb R$  and there exist numbers $M>0$ and $D_0>0$ such that  
\begin{equation*}%\label{wcond}
    \|w_n\|_Z\le MD_0^n\text{ for all }n\ge 1.
\end{equation*}

Then there is $N_0\ge 1$ such that $\sum_{k=0}^\infty \theta_n^{k}w_k$ converges absolutely to a vector $v_n\in Z$, for all $n\ge N_0$. Moreover, one has, for all $n\ge N_0$ and $m\ge 1$ that 
\begin{equation}\label{vnthew}
    v_n=\sum_{k=0}^{m-1} \theta_n^{k}w_k+\theta_n^{m}w_n^{(m)} \text{ with } \lim_{n\to\infty}w_n^{(m)}=w_m.
\end{equation} 
\end{lem}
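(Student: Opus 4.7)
The plan is to reduce the assertion to a geometric-series comparison argument, exploiting the completeness of $Z$ together with the hypothesis $\|w_k\|_Z \le MD_0^k$ and the fact that $\theta_n \to 0$.

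First I would choose $N_0$ so large that $\theta_n D_0 \le 1/2$ for all $n \ge N_0$. For such $n$ the series $\sum_{k=0}^\infty \theta_n^k w_k$ is majorized in norm by $M\sum_{k=0}^\infty (\theta_n D_0)^k$, which is a convergent geometric series. Since $Z$ is a Banach space, absolute convergence implies convergence, so $v_n := \sum_{k=0}^\infty \theta_n^k w_k$ is a well-defined element of $Z$ for every $n \ge N_0$.

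Next, for each $m \ge 1$ and $n \ge N_0$, I would define
\begin{equation*}
w_n^{(m)} := \sum_{j=0}^{\infty} \theta_n^{j} w_{m+j},
\end{equation*}
which is again absolutely convergent by the same geometric comparison (shifted by $m$). Splitting the defining sum for $v_n$ at index $m$ yields the identity
\begin{equation*}
v_n = \sum_{k=0}^{m-1}\theta_n^{k}w_k + \theta_n^{m} w_n^{(m)},
\end{equation*}
which is the representation \eqref{vnthew}. It remains to verify $w_n^{(m)} \to w_m$ as $n\to\infty$. Since the $j=0$ term equals $w_m$, I would estimate the tail
\begin{equation*}
\bigl\|w_n^{(m)} - w_m\bigr\|_Z \le \sum_{j=1}^{\infty} \theta_n^{j}\,\|w_{m+j}\|_Z \le MD_0^{m}\sum_{j=1}^{\infty}(\theta_n D_0)^{j} = MD_0^{m}\cdot \frac{\theta_n D_0}{1-\theta_n D_0},
\end{equation*}
which tends to $0$ as $n\to\infty$ because $\theta_n \to 0$. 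This yields the required limit.

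There is no real obstacle here; the only mild subtlety is making sure the uniform geometric majorization is applied with an index shift (for the $w_n^{(m)}$ series) and that the threshold $N_0$ is chosen independently of $m$. Once $N_0$ is fixed by the single condition $\theta_n D_0 \le 1/2$, every subsequent estimate is valid for all $m \ge 1$ simultaneously, and the proof is complete.
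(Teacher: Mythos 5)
Your proof is correct and follows essentially the same route as the paper: choose $N_0$ so that $D_0\theta_n$ is bounded below $1$ for $n\ge N_0$, majorize by a geometric series to get absolute convergence, define $w_n^{(m)}$ as the shifted tail sum, and bound $\|w_n^{(m)}-w_m\|_Z$ by $MD_0^{m+1}\theta_n/(1-D_0\theta_n)\to 0$, which is exactly the paper's estimate. The only differences are cosmetic (fixing the threshold at $1/2$ instead of a general $\gamma\in(0,1)$, and reindexing the tail), so there is nothing to add.
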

\begin{proof}
 Let $\gamma\in(0,1)$ and integer $N_0\ge 1$ be such that $D_0\theta_n\le \gamma$ for all $n\ge N_0$. 
We have
\begin{equation}
    \theta_n^{k}\|w_k\|_Z\le M(D_0\theta_n)^k\le M\gamma^k\text{ for }n\ge N_0.
\end{equation}
This implies $\sum_{k=0}^\infty \theta_n^{k}w_k$ converges absolutely to a vector $v_n\in Z$ for any $n\ge N_0$.

For $n\ge N_0$ and $m\ge 1$, we write $v_n$ as in \eqref{vnthew} with $w_n^{(m)}=w_m+ \sum_{k=m+1}^\infty \theta_n^{k-m}w_k$.

We estimate the last sum by 
   \begin{align*}
       \sum_{k=m+1}^\infty \theta_n^{k-m}\|w_k\|_Z
       &\le M \theta_n^{-m} \sum_{k=m+1}^\infty (D_0\theta_n)^{k}= M \theta_n^{-m} \frac{(D_0\theta_n)^{m+1}}{1-D_0\theta_n}
       =\frac{MD_0^{m+1}\theta_n}{(1-D_0\theta_n)},
   \end{align*}
which goes to $0$ as $n\to\infty$.
Thus, $\lim_{n\to\infty} w_n^{(m)}=w_m$.
\end{proof}

Returning to \eqref{genevg} and taking into account Lemma \ref{convlem}, we look for the following specific expansions.
For $n\ge 1$, let
\begin{equation}\label{vngn}
 v_n=\sum_{k=0}^\infty \alpha_n^{-k} w_k\text{ and }
  g_n=\sum_{k=0}^\infty \alpha_n^{-k} h_k. 
\end{equation}

Below, we use heuristic arguments to find the relations between $w_k$'s and $h_k$'s.
Substituting expansions in \eqref{vngn} into \eqref{eqforce} and collecting the  $\alpha_n^{-m}$-terms, for $m\ge -1$, we formally derive, for $m=-1$, 
\begin{equation}\label{Bw0}
B(w_0,w_0)=0,
\end{equation}
and, for $m\ge 0$, 
\begin{equation}\label{hm}
 Aw_m + \sum_{k=0}^{m+1} B(w_k,w_{m+1-k})=h_m,
\end{equation}
i.e.,
\begin{equation}\label{hmBs}
 Aw_m + \Bs(w_0,w_{m+1})+\sum_{k=1}^m B(w_k,w_{m+1-k})=h_m.
\end{equation}

In particular, when $m=0$,
\begin{equation}\label{h0}
 Aw_0 + \Bs(w_0,w_1)=h_0,
\end{equation}
and when $m=1$,
\begin{equation}\label{h1}
 Aw_1 + \Bs(w_0,w_2)+B(w_1,w_1)=h_1.
\end{equation}

We will show that the above constructions can be made rigorous.
Let 
$$M_A=\max\{|Av|: v\in \mathcal H,|v|=1\} \text{ and } M_B=\max\{|B(u,v)|: u,v\in\mathcal H,|u|=|v|=1\}.$$
Then 
\begin{equation}\label{ABineq}
|v|\le     |Av|\le M_A |v|,\quad |B(u,v)|\le M_B  |u|\, |v| \text{ for all } u,v\in \mathcal H.
\end{equation}

\begin{defn}
For $u\in \mathcal H$, we define a linear mapping $L_u$  on $\mathcal H$ by
$$ w\in\mathcal H\mapsto L_{u}w := Aw + \Bs(u,w)\in\mathcal H.$$
\end{defn}

Denote $c_0=1/(4(M_B+1))>0$. Let  $u\in \mathcal H$ with $|u|\le c_0$. 
By \eqref{ABineq}, one has $$|L_u w|\ge |w|-2M_B |u||w|\ge |w|/2.$$
This implies $L_u$ is invertible, and 
\begin{equation}\label{Luinv}
    |L_u^{-1}f|\le 2|f|\text{ for all }f\in \mathcal H.
\end{equation}

\begin{thm}\label{nonzthm}
Given $w_0\in \mathcal H$ that satisfies \eqref{Bw0}. Then there exist $h_0\in\mathcal H$ and $w_k,h_k\in \mathcal H\setminus\{0\}$ for all $k\ge 1$, and an integer $N_0\ge 1$ such that \eqref{hm} holds for all $m\ge 0$, $v_n$ and $g_n$ defined by \eqref{vngn} are absolutely convergent for all $n\ge N_0$ and satisfy equation \eqref{eqforce} for all $n\ge N_0$. 
Moreover, the series in \eqref{vngn} are pre-unitary expansions of $(v_n)_{n=N_0}^\infty$ and $(g_n)_{n=N_0}^\infty$, that is,
\begin{equation}\label{vgpre}
    v_n\approx w_0+\sum_{k=1}^\infty \Gamma_{k,n}w_k\text{ and } g_n\approx  h_0+\sum_{k=1}^\infty \Gamma_{k,n}h_k \text{ with } \Gamma_{k,n}=\alpha_n^{-k}.
\end{equation}
\end{thm}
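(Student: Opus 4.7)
The plan is to construct the sequences $(w_k)_{k\ge 1}$ and $(h_k)_{k\ge 0}$ inductively from the given $w_0$, then to invoke Lemma \ref{convlem} to get absolute convergence of the power series \eqref{vngn} for large $n$, to verify equation \eqref{eqforce} by rearranging those series, and finally to read off the pre-unitary expansion structure from Definition \ref{uexp2}.

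For the construction, fix a small parameter $\delta\in(0,1)$ and build the $w_k$'s by induction on $k$. At the base step, pick any nonzero $w_1\in\mathcal H$ with $|w_1|\le\delta$ and set $h_0:=Aw_0+\Bs(w_0,w_1)$ (the theorem statement permits $h_0=0$). Inductively, with $w_0,\ldots,w_m$ already chosen, select $w_{m+1}\in\mathcal H\setminus\{0\}$ satisfying $|w_{m+1}|\le\delta^{m+1}$ and such that
\[
h_m := Aw_m+\Bs(w_0,w_{m+1})+\sum_{j=1}^{m}B(w_j,w_{m+1-j})
\]
is nonzero. Since $\mathcal H$ is finite-dimensional, the set of $w_{m+1}$ forcing $h_m=0$ is the intersection of the ball $\{|w|\le\delta^{m+1}\}$ with an affine subspace of $\mathcal H$; as long as this intersection is not the whole ball, an admissible $w_{m+1}$ exists. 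The degenerate case (when $\Bs(w_0,\cdot)$ is identically zero on $\mathcal H$ and the already-determined terms cancel) can be avoided by perturbing $w_1$ at the initial step, using the many degrees of freedom available in $\mathcal H$; in the extreme case $w_0=0$ one simply chooses the entire sequence $(w_k)$ generically.

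With $|w_k|\le\delta^k$ in hand, the bounds \eqref{ABineq} applied term by term to the formula for $h_m$ give $|h_k|\le C\delta^k$ for some constant $C>0$, so both sequences are exponentially bounded with the same $D_0=\delta$. Applying Lemma \ref{convlem} with $\theta_n=\alpha_n^{-1}$ separately to $(w_k)$ and $(h_k)$ yields an $N_0$ for which the two series in \eqref{vngn} converge absolutely in $\mathcal H$ for every $n\ge N_0$, and each admits the tail decomposition \eqref{vnthew}. To verify \eqref{eqforce} I would substitute the series into $Av_n+\alpha_n B(v_n,v_n)$ and use the Cauchy product (legitimate by absolute convergence) to reorganise by powers of $\alpha_n^{-1}$: the coefficient of $\alpha_n^{+1}$ is $B(w_0,w_0)=0$ by \eqref{Bw0}, while for each $m\ge 0$ the coefficient of $\alpha_n^{-m}$ equals $Aw_m+\sum_{j+k=m+1}B(w_j,w_k)=h_m$ by the inductive definition, so the sum is exactly $g_n$.

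Finally, to confirm the pre-unitary expansion \eqref{vgpre} I would check Definition \ref{uexp2}: $\Gamma_{k,n}:=\alpha_n^{-k}$ satisfies $\Gamma_{k,n}\to 0$ and $\Gamma_{k+1,n}/\Gamma_{k,n}=\alpha_n^{-1}\to 0$; the tail vectors produced by Lemma \ref{convlem} converge to $w_k$ and $h_k$ respectively; and the nonvanishing conditions $w_k,h_k\neq 0$ for $k\ge 1$ are exactly what was enforced in the construction. The main obstacle in this plan is the inductive step itself --- guaranteeing that at each stage a small nonzero $w_{m+1}$ can be chosen with $h_m\neq 0$. This is a finite-dimensional nondegeneracy argument, and the delicate point is to rule out (or suitably circumvent) the pathological situation in which $\Bs(w_0,\cdot)$ acts as the zero map on $\mathcal H$ and the already-determined terms cancel simultaneously.
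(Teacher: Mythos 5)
Your overall architecture (inductive construction of $w_k,h_k$, exponential bounds, Lemma \ref{convlem} with $\theta_n=\alpha_n^{-1}$, Cauchy-product verification of \eqref{eqforce}, and checking Definition \ref{uexp2}) matches the paper, and your handling of the non-degenerate case is fine: when $\Bs(w_0,\cdot)\not\equiv 0$, the set of $w_{m+1}$ in the ball forcing $h_m=0$ is contained in a proper affine subspace, so an admissible nonzero choice exists (the paper does essentially the same thing, only phrased via the inequality $|\Bs(w_0,w_{m+1})|<|Aw_m+\sum_{k=1}^m B(w_k,w_{m+1-k})|$).

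The genuine gap is the degenerate case $\Bs(w_0,u)=0$ for all $u\in\mathcal H$, which you flag but do not resolve, and which is not a pathology one can wave away: Example \ref{Bszero} in the paper exhibits such $w_0$ and spaces $\mathcal H$ of arbitrarily large dimension. In that case the term $\Bs(w_0,w_{m+1})$ drops out of \eqref{hmBs}, so $h_m$ is already determined by $w_1,\dots,w_m$ and your inductive criterion ``choose $w_{m+1}$ so that $h_m\ne 0$'' is vacuous; moreover ``perturbing $w_1$'' or ``choosing generically'' is not a proof, since one must satisfy infinitely many nondegeneracy conditions simultaneously, each involving the whole prefix of the sequence. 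The paper's resolution is structurally different there: it reverses the roles of data and unknown. One first notes $h_1=Aw_1+B(w_1,w_1)$ is automatically nonzero for $w_1\ne 0$, and by choosing $|h_1|\le c_0$ small and solving the steady Galerkin equation \eqref{sh1} with the energy estimate $|w_1|\le|h_1|\le c_0$, the operator $L_{w_1}=A+\Bs(w_1,\cdot)$ becomes invertible with the bound \eqref{Luinv}; then for $m\ge 2$ one \emph{chooses} $h_m\ne 0$ so that $f_m$ in \eqref{shm} is nonzero and suitably small, and \emph{solves} $w_m=L_{w_1}^{-1}f_m$, which is automatically nonzero and satisfies the required exponential bound. (Alternatively, one could repair your induction by re-indexing: in the degenerate case $h_m=L_{w_1}w_m+\sum_{2\le k\le m-1}B(w_k,w_{m+1-k})$, so the nondegeneracy choice should be made on $w_m$ at its own step, avoiding the single bad point $-L_{w_1}^{-1}\bigl(\sum_{2\le k\le m-1}B(w_k,w_{m+1-k})\bigr)$, which again requires the smallness of $w_1$ and the invertibility of $L_{w_1}$ that your proposal never establishes.) As written, your argument does not produce the required $h_m\ne 0$ for all $m\ge 1$ in this case, so the proof is incomplete.
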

\begin{proof}
First of all, we define $h_0$ by \eqref{h0}.
Given any numbers $M\ge 1$ and $D_0>1$.

\medskip\noindent
Step 1. We will find $w_k,h_k\in \mathcal H\setminus\{0\}$, for all $k\ge 1$, that satisfy \eqref{hm} and 
\begin{equation}\label{wcond2}
    |w_k|\le M D_0^k \text{ for all } k\ge 1.
\end{equation}

We consider two cases.

\medskip
\emph{Case 1. The mapping $u\in\mathcal H\mapsto \Bs(w_0,u)$ is not zero.}
We construct $w_k$'s and $h_k$'s recursively.
First, choose $w_1\in \mathcal H$ such that $0<|w_1|\le M D_0$ and define $h_0$ by \eqref{h0}.

Let $m\ge 1$. Suppose we already have $w_1,\ldots,w_{m}$ and $h_1,\ldots,h_{m-1}$. 

(a) If $Aw_m+\sum_{k=1}^{m}B(w_k,w_{m+1-k})=0$, choose $w_{m+1}\in \mathcal H\setminus\{0\}$ such that $|w_{m+1}|\le M D_0^{m+1}$ and 
$ \Bs(w_0,w_{m+1})\ne 0.$

(b) If $Aw_m+\sum_{k=1}^{m}B(w_k,w_{m+1-k})\ne 0$, choose $w_{m+1}\in \mathcal H\setminus\{0\}$ so that $|w_{m+1}|\le M D_0^{m+1}$  and
$$ |\Bs(w_0,w_{m+1})|<\left|Aw_m+\sum_{k=1}^{m}B(w_k,w_{m+1-k})\right|. $$

For both cases (a) and (b), define $h_m$ by \eqref{hmBs}. Then $h_m\ne 0$. 

\medskip
\emph{Case 2. $\Bs(w_0,u)=0$ for all $u\in\mathcal H$.}
Equation \eqref{hmBs} for $m\ge 1$ now becomes 
\begin{align}
\label{sh1}
&Aw_1+B(w_1,w_1)=h_1, \text{ see \eqref{h1} when $m=1$,}
\end{align}
and, for $m\ge 2$,
\begin{align}
\notag 
&Aw_m+\Bs(w_1,w_m)+\sum_{2\le k\le m-1}  B(w_k,w_{m+1-k})=h_m\\
\intertext{which is equivalent to}
\label{shm}
&L_{w_1}w_m = f_m:=h_m-\sum_{2\le k\le m-1}  B(w_k,w_{m+1-k}).
\end{align}

We choose $h_1\in \mathcal H$ such that  $0<|h_1|\le c_0$.
We solve for $w_1$ from the Galerkin NSE \eqref{sh1} with the body force $h_1$.
Taking scalar product (in $\mathcal H$) of \eqref{sh1} with $w_1$, we have
$$|w_1|^2\le \langle Aw_1,w_1\rangle =\langle h_1,w_1\rangle\le \frac12|w_1|^2+\frac12|h_1|^2.$$
Hence, $0<|w_1|\le |h_1|\le c_0.$ Since $c_0<1$, we have $|w_1|\le M D_0$.

Let $m\ge 2$. Suppose we already have $h_k,w_k$ for $k=1,\ldots,m-1$.
We choose $h_m\in \mathcal H\setminus\{0\}$ such that the $f_m$ defined in \eqref{shm} satisfies $0<|f_m|\le M D_0^m/2$.
Choose $w_m=L_{w_1}^{-1}f_m$. Then $w_m\ne 0$ and, by inequality \eqref{Luinv},  
$|w_m|\le 2|f_m|\le M D_0^m.$

\medskip\noindent
Step 2. For $m\ge 1$,  we have from \eqref{hm}, \eqref{ABineq} and \eqref{wcond2} that
\begin{align*}
|h_m|&\le M_A |w_m|+\sum_{k=0}^{m+1}M_B |w_k||w_{m+1-k}|
\le M_A M D_0^m+(m+2)M_B M^2 D_0^{m+1}.
\end{align*}

This implies that $\limsup_{m\to\infty} |h_m|^{1/m}<\infty$.
We apply Lemma \ref{convlem} to both $w_n$ and $h_n$ using $\theta_n=\alpha_n^{-1}$. Then there exists $N_0\ge 1$ such that the series in \eqref{vngn}
converge absolutely for any $n\ge N_0$, and they are the pre-unitary expansions \eqref{vgpre} of $(v_n)_{n=N_0}^\infty$ and $(g_n)_{n=N_0}^\infty$.

\medskip\noindent
Step 3.
By the absolute convergence of series for $v_n$ and (similar) Cauchy's product for $B(v_n,v_n)$, one can prove, with the use of relations \eqref{Bw0} and \eqref{hm}, that $v_n$ and $g_n$ satisfy equation \eqref{steady} for all $n\ge N_0$. 
\end{proof}

\begin{obs}\label{matrmk}
Thanks to Remark \ref{propec},  we can convert the pre-unitary expansions in \eqref{vgpre} to the following unitary ones
\begin{align}
\label{vhat}
    v_n&\approx w_0+\sum_{k=1}^\infty \widehat\Gamma_{k,n}\widehat w_k,\text{ where } 
    \widehat w_k=|w_k|^{-1} w_k,\ \widehat\Gamma_{k,n}=|w_k| \alpha_n^{-k},\\
\label{ghat}
    g_n&\approx h_0+\sum_{k=1}^\infty \widehat H_{k,n}\widehat h_k,\text{ where } 
    \widehat h_k=|h_k|^{-1} h_k,\ \widehat H_{k,n}=|h_k| \alpha_n^{-k}.
\end{align}

We have
$1\sim \alpha_n \widehat\Gamma_{1,n}$ and $\widehat H_{k,n} \sim  \widehat\Gamma_{k,n} \sim \alpha_n  \widehat\Gamma_{j,n} \widehat\Gamma_{k+1-j,n}$  for $k\ge 1$, $1\le j\le k$.

Substituting \eqref{vhat} and \eqref{ghat} into equation \eqref{eqforce} and following the procedure \eqref{GP} in Section \ref{results} with the use of equivalent sequences, we obtain, for each $m\ge 0$,
$$  
\widehat\Gamma_{m,n} A\widehat w_m + \alpha_n \sum_{k=0}^{m+1} \widehat\Gamma_{k,n} \widehat\Gamma_{m+1-k,n} B(\widehat w_k, \widehat w_{m+1-k})
= \widehat H_{m,n} \widehat h_m
$$
which is exactly equivalent to \eqref{hm}. This shows that the procedure \eqref{GP} works precisely in this case.
\end{obs}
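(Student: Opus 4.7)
My plan is to verify this remark by directly unwinding the scaling from Remark \ref{propec}, confirming the three equivalences by explicit computation, and then running procedure \eqref{GP} class by class against the countable ordinal tower that results.

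First I would apply Remark \ref{propec} with $\|\cdot\|_Z$ taken to be the $H$-norm $|\cdot|$ on $\mathcal H$. The pre-unitary expansions \eqref{vgpre} supplied by Theorem \ref{nonzthm} have base vectors $w_k,h_k\in\mathcal H\setminus\{0\}$ and the common coefficient sequence $\Gamma_{k,n}=\alpha_n^{-k}$. The scaling formula \eqref{vconvert} then prescribes exactly $\widehat w_k=|w_k|^{-1}w_k$, $\widehat\Gamma_{k,n}=|w_k|\alpha_n^{-k}$, and analogously $\widehat h_k=|h_k|^{-1}h_k$, $\widehat H_{k,n}=|h_k|\alpha_n^{-k}$, upgrading \eqref{vgpre} to the genuine unitary expansions \eqref{vhat} and \eqref{ghat}.

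Next I would verify the displayed equivalences by computing the defining ratios. One has $\alpha_n\widehat\Gamma_{1,n}=|w_1|$, a constant positive sequence, hence $\sim 1$. The ratios $\widehat H_{k,n}/\widehat\Gamma_{k,n}=|h_k|/|w_k|$ and $\alpha_n\widehat\Gamma_{j,n}\widehat\Gamma_{k+1-j,n}/\widehat\Gamma_{k,n}=|w_j||w_{k+1-j}|/|w_k|$ are likewise positive constants, so the stated equivalences hold. Setting $\widehat\Gamma_{0,n}=\widehat H_{0,n}=1$, $\widehat w_0=w_0$, $\widehat h_0=h_0$ for bookkeeping, one finds that for each fixed $m\ge 0$ the sequences $\widehat\Gamma_{m,n}$, $\widehat H_{m,n}$, and $\alpha_n\widehat\Gamma_{k,n}\widehat\Gamma_{m+1-k,n}$ (for $0\le k\le m+1$) all lie in the single equivalence class of $(\alpha_n^{-m})_n$, and so share a common value of $\ord$ by Definition \ref{ordef}. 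Distinct values of $m$ yield distinct classes, totally ordered by $m$.

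Finally I would substitute \eqref{vhat} and \eqref{ghat} into \eqref{eqforce}, invoking the absolute convergence from Theorem \ref{nonzthm} to justify the Cauchy-product expansion of $B(v_n,v_n)$. Applying procedure \eqref{GP} at the ordinal associated with each $m\ge 0$ collects exactly the finitely many terms whose coefficient sequence lies in $\langle(\alpha_n^{-m})_n\rangle$, producing
\[
\widehat\Gamma_{m,n}A\widehat w_m + \alpha_n\sum_{k=0}^{m+1}\widehat\Gamma_{k,n}\widehat\Gamma_{m+1-k,n}B(\widehat w_k,\widehat w_{m+1-k}) = \widehat H_{m,n}\widehat h_m.
\]
Multiplying through by $\alpha_n^m$ and substituting the explicit scalings from the first step, every factor of the form $|w_k|$, $|w_{m+1-k}|$, $|w_m|$, $|h_m|$ cancels against its inverse absorbed into a hat-vector, and the identity collapses to $Aw_m+\sum_{k=0}^{m+1}B(w_k,w_{m+1-k})=h_m$, which is precisely \eqref{hm}. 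The subtlest point is confirming that procedure \eqref{GP} genuinely partitions all the coefficient sequences arising after substitution into the layers described above, with no cross-layer leakage; this however is immediate from the uniform scaling $\Gamma_{k,n}=\alpha_n^{-k}$, which forces every such coefficient to be a positive constant multiple of some $\alpha_n^{-m}$.
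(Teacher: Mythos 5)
Your proposal is correct and follows essentially the same route as the paper's own (terse) justification of this remark: apply Remark \ref{propec} to normalize the pre-unitary expansions, check that every relevant coefficient ratio is a positive constant so that each order-$m$ layer is a single equivalence class, and observe that grouping by order and cancelling the norm factors reproduces \eqref{hm}. The only material you add is the explicit bookkeeping convention $\widehat\Gamma_{0,n}=\widehat H_{0,n}=1$, $\widehat w_0=w_0$, $\widehat h_0=h_0$ for the $k=0$ and $k=m+1$ terms of the convolution sum, which the paper leaves implicit.
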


The next Example \ref{Bszero} and Proposition \ref{Bsnonz} show that both Cases 1 and 2 in the proof of Theorem \ref{nonzthm} are probable.

Consider the periodic boundary condition. 
For  $\mathbf k\in \mathbb Z^d\setminus\{0\}$, denote 
$E_{\mathbf k}(\bx)=e^{i\mathbf k\cdot \bx}$ for $\bx\in\mathbb R^d$. 
For any nonempty, finite subset  $K$ of $\mathbb Z^d\setminus\{0\}$ that is symmetric about the origin, we denote
$$E[K]=\{u\in H: u=\sum_{\mathbf k\in K} \hat \bu_{\bk} E_{\mathbf k}(\bx)\}.$$

We say $\mathcal H$ covers a wave vector $\mathbf k\in \mathbb Z^d\setminus\{0\}$ if 
$E[\{\mathbf k,-\mathbf k\}] \subset \mathcal H$.
Clearly, $\mathcal H$ covers $\mathbf k$ if and only if it covers $(-\mathbf k)$.

\begin{eg}\label{Bszero}
Although Case 2 in the proof of Theorem \ref{nonzthm} looks peculiar, it still holds true for some particular $v$ and  space $\mathcal H$ which can have an arbitrary large dimension.
Here is an example. Consider the periodic boundary condition. Let $M\ge 1$ be a real number, and $\mathbf k$ be a given vector in $\mathbb Z^d$ such that $|\mathbf k|>2M$.

Set $K_1=\{\mathbf k,-\mathbf k\}$ and $K_2=\{\mathbf j\in\mathbb Z^d:0<|\mathbf j|\le M\}$.
Let $K_3$ be a finite subset of $\{\mathbf p \in\mathbb Z^d\setminus K_2:\mathbf p\ne 0, \mathbf p\cdot \mathbf k=0\}$ such that $K_3$ is symmetric about the origin.

Define $K=K_1\cup K_2\cup K_3$ and let $\mathcal H=E[K]$.
Let $v$ be any function in $E[K_1]\subset \mathcal H$.  

Let $u$ be any function in $\mathcal H$.
Then $u=u_1+u_2+u_3$, where $u_j\in E[K_j]$ for $j=1,2,3$. 
We have
\[ \Bs(v,u)=\Bs(v,u_1)+\Bs(v,u_2)+\Bs(v,u_3).\]

Obviously, $\Bs(v,u_1)=0.$

Consider $\Bs(v,u_2)$. 
Observe that the functions $(v\cdot \nabla )u_2$ and $(u_2\cdot\nabla)v$ are complex linear combinations of functions  $E_{\mathbf j\pm \mathbf k}(\bx)$ with $\mathbf j\in K_2$.
For $\mathbf j\in K_2$, one has $|\mathbf j\pm \mathbf k|\ge |\mathbf k|-|\mathbf j|>M$. 
Thus $\mathbf j\pm \mathbf k\not\in K_2$. 
Also, $(\mathbf k\pm \mathbf j)\cdot \mathbf k\ge |\mathbf k|^2-|\mathbf j||\mathbf k|\ge 2M|\mathbf k|-M|\mathbf k|>0$, hence $\mathbf j\pm \mathbf k\not \in K_3$.
It follows that $\mathbf j\pm \mathbf k\not\in K$ or $\mathbf j\pm \mathbf k\in K_1$.
With both of these cases, we have, after projection to $\mathcal H$,  
$B(v,u_2)=B(u_2,v)=0$.

Consider $\Bs(v,u_3)$. Let $\mathbf p\in K_3$.
Then, by the orthogonality,  $|\mathbf p\pm \mathbf k|>|k|>2M$, which implies $\mathbf p\pm \mathbf k\not\in K_2$.
Also, $(\mathbf p\pm \mathbf k)\cdot \mathbf k=\pm|\mathbf k|^2\ne 0$, hence $\mathbf p\pm \mathbf k\not \in K_3$.
Then, similar to the consideration of $\Bs(v,w_2)$, we must have $B(v,u_3)=B(u_3,v)=0$.

In conclusion, we have $\Bs(v,u)=0$. Finally, the large dimension of $\mathcal H$ comes from the large size of $K$.
This, in turn, comes from the large size of $K_3$ regardless of $M$ and/or the fact that large number $M$ gives a large size of $K_2$.
\end{eg}

When $\mathcal H=\bar P_\Lambda H$, Proposition \ref{Bsnonz} below is a general result for Case 1 in the proof of Theorem \ref{nonzthm}. 

\begin{lem}\label{B2Dlem}
    Consider the 2D case with the periodic boundary condition. Let $\mathbf k,\mathbf j\in \mathbb Z^2\setminus\{0\}$ be such that $\mathbf k,\mathbf j$ are not parallel and $|\mathbf k|\ne |\mathbf j|$. Suppose $\mathcal H$ covers $\mathbf k$, $\mathbf j$, $\mathbf k+\mathbf j$.
    Assume $v$ is a nonzero function in $E[\{\mathbf k,-\mathbf k\}]$ and $w$ is a nonzero function in $E[\{\mathbf j,-\mathbf j\}]$. Then 
    the coefficient of $E_{\mathbf k+\mathbf j}(\bx)$ in $\Bs(v,w)$ is nonzero, and, consequently, $\Bs(v,w)\ne 0$.
\end{lem}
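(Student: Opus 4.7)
The plan is to compute the Fourier coefficient of $E_{\mathbf k+\mathbf j}$ in $\Bs(v,w)$ directly and show it is a nonzero multiple of $\mathbf m^\perp$, where $\mathbf m = \mathbf k + \mathbf j$. In 2D, the divergence-free condition $\hat{\mathbf v}_{\mathbf k}\cdot\mathbf k = 0$ in $\mathbb C^2$ forces $\hat{\mathbf v}_{\mathbf k} = \alpha\,\mathbf k^\perp$ for some $\alpha\in\mathbb C$, and similarly $\hat{\mathbf w}_{\mathbf j} = \beta\,\mathbf j^\perp$; the hypotheses $v,w\neq 0$ (with the reality condition $\hat{\mathbf v}_{-\mathbf k} = \overline{\hat{\mathbf v}_{\mathbf k}}$) give $\alpha,\beta\neq 0$. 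This is the crucial 2D simplification: the divergence-free constraint reduces each mode to a single complex scalar.

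Next I would locate all mode pairs contributing to the $E_{\mathbf m}$-coefficient. The Fourier expression for $B(u,v)$ at output wave vector $\mathbf m$ is a sum over $\mathbf p+\mathbf q=\mathbf m$ of Leray-projected terms involving $\hat{\mathbf u}_{\mathbf p}$ and $\hat{\mathbf v}_{\mathbf q}$. Restricting to $\mathbf p\in\{\pm\mathbf k\}$, $\mathbf q\in\{\pm\mathbf j\}$ (with roles swapped for $B(w,v)$), the inequalities $\mathbf k\neq\pm\mathbf j$ (from non-parallelism) leave only $(\mathbf p,\mathbf q)=(\mathbf k,\mathbf j)$ in $B(v,w)$ and $(\mathbf j,\mathbf k)$ in $B(w,v)$. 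Since $\mathcal H$ covers $\mathbf m$, the Galerkin projector $\Pi_N$ preserves this coefficient. Thus
$$\hat c_{\mathbf m}(\Bs(v,w)) = i\,\mathcal P_{\mathbf m}\Bigl[(\hat{\mathbf v}_{\mathbf k}\cdot \mathbf j)\,\hat{\mathbf w}_{\mathbf j} + (\hat{\mathbf w}_{\mathbf j}\cdot\mathbf k)\,\hat{\mathbf v}_{\mathbf k}\Bigr],$$
where $\mathcal P_{\mathbf m}\mathbf u = \mathbf u - (\mathbf u\cdot\mathbf m)\mathbf m/|\mathbf m|^2$ is the pointwise Leray projector.

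Finally I would carry out the algebraic simplification. Using $\mathbf k^\perp\cdot\mathbf j = -\mathbf j^\perp\cdot\mathbf k = D := k_1 j_2 - k_2 j_1 \neq 0$, substitution yields
$$\hat c_{\mathbf m}(\Bs(v,w)) = i\alpha\beta D\,\mathcal P_{\mathbf m}\bigl[(\mathbf j - \mathbf k)^\perp\bigr].$$
The bracketed vector is perpendicular to $\mathbf m$, hence a scalar multiple of $\mathbf m^\perp$; using $\mathbf a^\perp\cdot\mathbf b^\perp = \mathbf a\cdot\mathbf b$, that scalar equals
$$\frac{(\mathbf j - \mathbf k)^\perp\cdot \mathbf m^\perp}{|\mathbf m|^2} = \frac{(\mathbf j - \mathbf k)\cdot(\mathbf k + \mathbf j)}{|\mathbf m|^2} = \frac{|\mathbf j|^2 - |\mathbf k|^2}{|\mathbf m|^2},$$
which is nonzero by $|\mathbf k|\neq|\mathbf j|$. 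Hence $\hat c_{\mathbf m}$ is a nonzero multiple of $\mathbf m^\perp$, and a fortiori $\Bs(v,w)\neq 0$. The main obstacle is merely the bookkeeping in combining $B(v,w)$ and $B(w,v)$ and tracking the sign in $\mathbf k^\perp\cdot\mathbf j = -\mathbf j^\perp\cdot\mathbf k$; once this is handled, each of the three hypotheses (non-parallelism, unequal lengths, and nonvanishing of $v,w$) contributes exactly one nonvanishing factor, each indispensable.
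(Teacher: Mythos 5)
Your proof is correct and essentially the same as the paper's: the paper writes $\hat{\mathbf v}_{\mathbf k}$ as $z_{\mathbf k}\,\mathbf e_3\times\mathbf k$ (i.e.\ $\mathbf k^\perp$), isolates the same unique convolution pair, reduces the unprojected coefficient to a multiple of $\mathbf e_3\times(\mathbf j-\mathbf k)$, and checks non-vanishing of the Leray-projected coefficient by crossing with $\mathbf k+\mathbf j$ to get $(|\mathbf k|^2-|\mathbf j|^2)\mathbf e_3$, which is exactly your $|\mathbf j|^2-|\mathbf k|^2$ factor obtained instead by projecting onto $\mathbf m^\perp$. One phrase needs fixing: $(\mathbf j-\mathbf k)^\perp$ itself is not perpendicular to $\mathbf m=\mathbf k+\mathbf j$ (their inner product is $\pm 2D\ne 0$); what is perpendicular to $\mathbf m$ is $\mathcal P_{\mathbf m}\bigl[(\mathbf j-\mathbf k)^\perp\bigr]$, and since your scalar $\bigl((\mathbf j-\mathbf k)^\perp\cdot\mathbf m^\perp\bigr)/|\mathbf m|^2$ is precisely its coefficient along $\mathbf m^\perp$, the computation and the conclusion are unaffected.
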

\begin{proof}
There are $z_{\mathbf k},\zeta_{\mathbf j}\in\mathbb C\setminus\{0\}$ such that
$$v=z_{\mathbf k} (\mathbf e_3\times \mathbf k) E_{\mathbf k}(\bx)+\bar z_{\mathbf k} (\mathbf e_3\times \mathbf k) E_{-\mathbf k}(\bx),\ 
w=\zeta_{\mathbf j} (\mathbf e_3\times \mathbf j) E_{\mathbf j}(\bx)+\bar \zeta_{\mathbf j} (\mathbf e_3\times \mathbf j) E_{-\mathbf j}(\bx). $$

We have
$$
(v\cdot\nabla )w+(w\cdot \nabla )v=(\mathbf a E_{\mathbf k+\mathbf j}(\bx)+\bar{\mathbf a} E_{-\mathbf k-\mathbf j}(\bx))
+(\mathbf b E_{\mathbf k-\mathbf j}(\bx)+\bar {\mathbf b} E_{\mathbf j-\mathbf k}(\bx)),
$$
with $\mathbf a,\mathbf b\in\mathbb C^2$.
Clearly, all vectors $\mathbf k+\mathbf j$, $-\mathbf k-\mathbf j$, $\mathbf k-\mathbf j$, $\mathbf j-\mathbf k$ are nonzero and mutually distinct. Hence, the coefficient of $E_{\mathbf k+\mathbf j}(\bx)$ in $(v\cdot\nabla )w+(w\cdot \nabla )v$ is $\mathbf a$, which explicitly is
\begin{equation}\label{tri0}
\mathbf a= iz_{\mathbf k}\zeta_{\mathbf j}((\mathbf e_3\times \mathbf k)\cdot \mathbf j)\mathbf e_3\times \mathbf j+ iz_{\mathbf k}\zeta_{\mathbf j}((\mathbf e_3\times \mathbf j)\cdot \mathbf k)\mathbf e_3\times\mathbf  k ,    
\end{equation}
or equivalently, 
\begin{equation*}
    \mathbf a=iz_{\mathbf k}\zeta_{\mathbf j}((\mathbf e_3\times \mathbf k)\cdot \mathbf j)\mathbf e_3\times (\mathbf j-\mathbf k).
\end{equation*}
Therefore, the coefficient of $E_{\mathbf k+\mathbf j}(\bx)$ in $\Bs(v,w)$ is 
$$\mathbf c=\mathbf a-\frac{\mathbf a\cdot(\mathbf k+\mathbf j)}{|\mathbf k+\mathbf j|^2} (\mathbf k+\mathbf j).$$ 

Note that
$\mathbf c\times  (\mathbf j+\mathbf k)=iz_{\mathbf k}\zeta_{\mathbf j}((\mathbf e_3\times \mathbf k)\cdot \mathbf j)
[\mathbf e_3\times (\mathbf j-\mathbf k)]\times  (\mathbf j+\mathbf k)$.

Because $\mathbb Z^2$-vectors $\mathbf k,\mathbf j$ are not parallel, we have $(\mathbf e_3\times \mathbf k)\cdot \mathbf j\ne 0$. Also,
\begin{equation*}
[\mathbf e_3\times (\mathbf j-\mathbf k)]\times (\mathbf j+\mathbf k) = -(\mathbf j-\mathbf k)\cdot(\mathbf j+\mathbf k) \mathbf e_3=(|\mathbf k|^2-|\mathbf j|^2)\mathbf e_3\ne 0.
\end{equation*}
Therefore, $\mathbf c\ne 0$, which implies  $\Bs(v,w)\ne 0$.
\end{proof}

\begin{prop}\label{Bsnonz}
Consider the periodic boundary condition. 
Let $\Lambda\ge 5$ be an eigeinvalue of $A$. Assume $\mathcal H=\bar P_\Lambda H$.
Let $v$ be any nonzero function in $\bar P_{(\sqrt \Lambda-1)^2}H$.
Then there exists $w\in\mathcal H$ such that
$\Bs(v,w)\ne 0$.
\end{prop}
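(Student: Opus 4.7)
The plan is to construct $w \in \mathcal H$ whose Fourier spectrum is concentrated on a single symmetric pair $\{\mathbf j,-\mathbf j\}$ of wave vectors, and then extract a nonzero Fourier coefficient of $\Bs(v,w)$ at the wave vector $\mathbf k + \mathbf j$ for a suitably chosen Fourier mode $\mathbf k$ of $v$. Since $v \ne 0$ lies in $\bar P_{(\sqrt\Lambda-1)^2}H$, its finite Fourier support $M(v) \subset \mathbb Z^d \setminus\{0\}$ is nonempty and contained in the ball $\{|\mathbf p| \le \sqrt\Lambda-1\}$. I would fix any $\mathbf k \in M(v)$ and split $v = v_{\mathbf k} + v'$, where $v_{\mathbf k} \in E[\{\mathbf k, -\mathbf k\}] \setminus \{0\}$ and $v'$ is supported on $M(v) \setminus \{\pm\mathbf k\}$.

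The combinatorial heart of the proof is to select $\mathbf j \in \mathbb Z^d \setminus\{0\}$ satisfying: (a) $|\mathbf j|^2 \le \Lambda$ and $|\mathbf k+\mathbf j|^2 \le \Lambda$, so that $\mathcal H$ covers both $\mathbf j$ and $\mathbf k+\mathbf j$; (b) $\mathbf j$ is not parallel to $\mathbf k$ and $|\mathbf j| \ne |\mathbf k|$; and crucially (c) $\mathbf k + 2\mathbf j \notin M(v)$. I would then take $w$ to be any nonzero element of $E[\{\mathbf j, -\mathbf j\}] \subset \mathcal H$. The Fourier coefficient of $\Bs(v, w)$ at wave vector $\mathbf k + \mathbf j$ receives contributions only from pairs $(\mathbf p, \mathbf q)$ with $\mathbf p \in M(v)$, $\mathbf q \in \{\mathbf j, -\mathbf j\}$, and $\mathbf p + \mathbf q = \mathbf k + \mathbf j$; the only candidates are $(\mathbf k, \mathbf j)$ and $(\mathbf k + 2\mathbf j, -\mathbf j)$. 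Condition (c) kills the second, so that coefficient equals the coefficient of $E_{\mathbf k + \mathbf j}$ in $\Bs(v_{\mathbf k}, w)$, which by Lemma \ref{B2Dlem} (whose hypotheses are precisely (a) and (b)) is nonzero. Hence $\Bs(v,w) \ne 0$.

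The main obstacle I anticipate is the integer-lattice existence assertion (c). My approach is to search among $\mathbf j = -\mathbf k + \mathbf e$ with small $\mathbf e \in \mathbb Z^d$, which makes $|\mathbf k + \mathbf j| = |\mathbf e|$ automatically small and so satisfies (a) for moderate $|\mathbf e|$. The identity $|\mathbf j|^2 - |\mathbf k|^2 = |\mathbf e|^2 - 2\mathbf k \cdot \mathbf e$ has the same parity as $|\mathbf e|^2$, so choosing $\mathbf e$ with $|\mathbf e|^2$ odd (for instance an integer unit vector transverse to $\mathbf k$) gives the norm portion of (b) for free. Then $\mathbf k + 2\mathbf j = -\mathbf k + 2\mathbf e$, and since $M(v)$ is finite, sweeping $\mathbf e$ across enough small integer vectors will force $-\mathbf k + 2\mathbf e$ out of $M(v)$; the assumption $\Lambda \ge 5$ provides exactly the room needed for this search to succeed in every case. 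For $d = 3$ one needs a three-dimensional analog of Lemma \ref{B2Dlem}, which would follow from computing the pre-Leray coefficient $(\mathbf a \cdot \mathbf j)\mathbf b + (\mathbf b \cdot \mathbf k)\mathbf a$ at $E_{\mathbf k + \mathbf j}$: the extra dimension gives freedom in choosing $\mathbf b \in \mathbf j^\perp$ (e.g.\ $\mathbf b \propto \mathbf j \times \mathbf k$) so that the resulting vector is not parallel to $\mathbf k + \mathbf j$ and therefore survives the Leray projection.
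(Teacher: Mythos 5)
Your reduction (split $v$ into its $\{\mathbf k,-\mathbf k\}$ part plus the rest, take $w$ supported on a single symmetric pair $\{\mathbf j,-\mathbf j\}$, isolate the Fourier coefficient at $\mathbf k+\mathbf j$, and invoke Lemma \ref{B2Dlem}) is exactly the paper's skeleton, but the step you yourself identify as the heart of the matter --- the existence of $\mathbf j$ satisfying (a), (b) and especially (c) --- is not proved, and the argument you offer for it does not work. The finiteness of $M(v)$ is irrelevant: your sweep runs over a bounded family of small $\mathbf e$, and all of the resulting candidates $-\mathbf k+2\mathbf e$ may lie in $M(v)$. Concretely, in $d=2$ take a large eigenvalue $\Lambda$, let $v$ have nonzero coefficients on every mode $0<|\mathbf p|\le\sqrt\Lambda-1$, and take your ``any'' $\mathbf k\in M(v)$ to be $\mathbf e_1$: then for every $\mathbf e$ with $|\mathbf e|\le(\sqrt\Lambda-2)/2$ one has $|-\mathbf e_1+2\mathbf e|\le 1+2|\mathbf e|\le\sqrt\Lambda-1$ and $-\mathbf e_1+2\mathbf e\ne 0$ (odd first coordinate), so $-\mathbf k+2\mathbf e\in M(v)$ and (c) fails for all small $\mathbf e$, in particular for the unit vectors your parity trick uses. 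To salvage (c) you would need $|\mathbf e|$ of order $\sqrt\Lambda$, where (a), the non-parallelism and the norm condition interact nontrivially with the lattice, and nothing in the sketch (certainly not ``$\Lambda\ge5$ provides exactly the room needed'') shows a valid $\mathbf j$ exists for an arbitrary choice of $\mathbf k$, especially for the small eigenvalues $\Lambda=5,8,9,\dots$.

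The paper closes this gap structurally rather than by search: it takes $\mathbf k$ to be the lexicographically \emph{maximal} element of the spectrum of $v$ and the perturbing vector (your $\mathbf j$, its $\mathbf k'$) to have nonnegative components with at least one positive, so that $\mathbf k+2\mathbf k'$ is lexicographically larger than $\mathbf k$ and hence automatically outside $M(v)$; condition (c) then costs nothing, and only $|\mathbf k'|\in\{1,2\}$ is needed, for which $|\mathbf k|\le\sqrt\Lambda-1$ and $\Lambda\ge5$ give $|\mathbf k+\mathbf k'|^2\le\Lambda$ together with the hypotheses of Lemma \ref{B2Dlem}. If you replace ``fix any $\mathbf k\in M(v)$'' by this extremal choice, your argument goes through essentially verbatim. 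Separately, your 3D remark is also incomplete: choosing the coefficient of $w$ proportional to $\mathbf j\times\mathbf k$ makes the surviving pre-projection coefficient $i(\mathbf a_{\mathbf k}\cdot\mathbf j)(\mathbf j\times\mathbf k)$, so non-parallelism to $\mathbf k+\mathbf j$ is automatic but you must additionally arrange $\mathbf a_{\mathbf k}\cdot\mathbf j\ne0$, where $\mathbf a_{\mathbf k}$ is only known to be orthogonal to $\mathbf k$; this extra constraint on $\mathbf j$ never enters your selection, whereas the paper secures it by a short case analysis over coordinate directions.
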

\begin{proof}
In this case, we have 
$\mathcal H=E[\{\mathbf k\in\mathbb Z^d:0<|\mathbf k|^2\le \Lambda\}]$ and
$$v\in \bar P_{(\sqrt \Lambda-1)^2}H=E[\{\mathbf k\in\mathbb Z^d: 0<|\mathbf k|\le \sqrt \Lambda-1\}]\subset \mathcal H.$$ 
We use lexicographic order for the wave vectors in the finite Fourier series of $v$.

\medskip\noindent\emph{The 2D case.} 
Let $K$ be the set of wave vectors in the finite Fourier series of $v$ with nozero coefficients. 
Let $\mathbf k=\max K$, $K_0=\{\mathbf k,-\mathbf k\}$ and $K_1=K\setminus K_0$.
We write $v=v_0+v_1$ where $v_0\ne 0$, $v_0\in E[K_0]$ and $v_1\in E[K_1]$.

We will choose $w$ to be a nonzero function in $E[\{\mathbf k',-\mathbf k'\}]$ for a chosen wave vector $\mathbf k'$ having nonnegative coordinates and at least one positive coordinate. We have 
\begin{equation*}%\label{Bsv0}
    \Bs(v,w)=\Bs(v_0,w)+\Bs(v_1,w).
\end{equation*}

Consider the coefficient of function $E_{\mathbf k+\mathbf k'}(\bx)$ in $\Bs(v,w)$.
Observe that $\Bs(v_1,w)$ is a finite sum of functions of modes $\mathbf j\pm \mathbf k$ with $\mathbf j\in K_1$.
Let $\mathbf j$ be any vector in $K_1$. Then $\mathbf j<\mathbf k$ which yields $\mathbf j+\mathbf k'\ne \mathbf k+\mathbf k'$. Suppose $\mathbf j-\mathbf k'=\mathbf k+\mathbf k'$. Then $\mathbf j=\mathbf k+2\mathbf k'$. Since $\mathbf k'>(0,0)$, this implies $\mathbf j>\mathbf k$, a contradiction. 
Thus, $\mathbf j-\mathbf k'\ne \mathbf k+\mathbf k'$.
We conclude that $\Bs(v_1,w)$  does not contribute to the coefficient of $E_{\mathbf k+\mathbf k'}(\bx)$.
Therefore, 
\begin{equation}\label{reduce}
\text{coefficient of $E_{\mathbf k+\mathbf k'}(\bx)$ in $\Bs(v,w)$ is exactly that in $\Bs(v_0,w)$.}    
\end{equation}

Suppose $\mathbf k=(k_1,k_2)$. Then $k_1\ge 0$.
\begin{itemize}
    \item If $k_1\ge 2$ choose $\mathbf k'=(0,1)$.

    \item If $k_1=1$, $|k_2|\ge 1$ choose $\mathbf k'=(0,1)$.

    \item If $k_1=1$, $k_2=0$ choose $\mathbf k'=(0,2)$. Then $|\mathbf k+\mathbf k'|^2=5$.

    \item If $k_1=0$, $|k_2|\ge 2$ choose $\mathbf k'=(1,0)$.

    \item If $k_1=0$, $|k_2|=1$ choose $\mathbf k'=(2,0)$. Then $|\mathbf k+\mathbf k'|^2=5$.
\end{itemize}

Note that $|\mathbf k|^2<\Lambda$ and $|\mathbf k'|^2\le 4$.
Also, except for the case $|\mathbf k+\mathbf k'|^2=5$, the remaining cases have $|\mathbf k'|=1$. The last case implies 
$|\mathbf k+\mathbf k'|^2\le (|\mathbf k|+1)^2\le \Lambda$.
Therefore, $\mathcal H$ covers $\mathbf k$, $\mathbf k'$ and $\mathbf k+\mathbf k'$.
Observe that $\mathbf k,\mathbf k'$ are not parallel and $|\mathbf k|\ne |\mathbf k'|$.
By the virtue of Lemma \ref{B2Dlem}, the coefficient of $E_{\mathbf k+\mathbf k'}(\bx)$ in $\Bs(v_0,w)$ is nonzero.
Combining this with \eqref{reduce}, we obtain $\Bs(v,w)\ne 0$.

\medskip\noindent\emph{The 3D case.} 
We proceed as in the 2D case up to \eqref{reduce}.
In this case, 
\begin{equation}\label{vzform}
    v_0=\mathbf a_{\mathbf k} E_{\mathbf k}(\bx)+\bar {\mathbf a}_{\mathbf k} E_{-\mathbf k}(\bx)\text{ with }0\ne \mathbf a_{\mathbf k}\in \mathbb C^3,\ 
\mathbf a_{\mathbf k}\cdot \mathbf k=0.
\end{equation}

We choose $w=(\mathbf k\times \mathbf k')(E_{\mathbf k'}(\bx)+E_{-\mathbf  k'}(\bx)),$
where $\mathbf k'\in \mathbb Z^3$ additionally satisfies it is not parallel with $\mathbf k$.
Then, similar to \eqref{tri0}, the coefficient of $E_{\mathbf k+\mathbf k'}(\bx)$ in $(v_0\cdot\nabla)w+(w\cdot \nabla )v_0$ is
\begin{align*}
    i (\mathbf a_{\mathbf k}\cdot \mathbf k')(\mathbf k\times \mathbf k')+i ((\mathbf k\times \mathbf k')\cdot \mathbf k)\mathbf a_{\mathbf k}
    =i (\mathbf a_{\mathbf k}\cdot \mathbf k')(\mathbf k\times \mathbf k').
\end{align*}

Since $\mathbf k'$ is not parallel with $\mathbf k$, we have $\mathbf k\times \mathbf k'$ and $\mathbf k+\mathbf k'$ are nonzero and perpendicular to each other. Hence $(\mathbf k\times \mathbf k')$ is not parallel with $(\mathbf k+\mathbf k')$.

We will select $\mathbf k'$ so that  $\mathbf a_{\mathbf k}\cdot \mathbf k'\ne 0$. We do so for all possible scenarios of $\mathbf k$.
\begin{itemize}
\item Case $\mathbf k$ is parallel with $\mathbf e_1$. By the last condition in \eqref{vzform}, $\mathbf a_{\mathbf k}$ is in $\mathbf e_2,\mathbf e_3$-plane. So $\mathbf a_{\mathbf k}\cdot \mathbf e_2\ne 0$ or $\mathbf a_{\mathbf k}\cdot \mathbf e_3\ne 0$. 
We choose $\mathbf k'=\mathbf e_2$ or $\mathbf k'=\mathbf e_3$ correspondingly. 

\item Case $\mathbf k$ is parallel with $\mathbf e_2$. Choose  $\mathbf k'=\mathbf e_1$ when  $\mathbf a_{\mathbf k}\cdot \mathbf e_1\ne 0$, or $\mathbf k'=\mathbf e_3$ when  $\mathbf a_{\mathbf k}\cdot \mathbf e_3\ne 0$.

\item Case $\mathbf k$ is parallel with $\mathbf e_3$. Choose  $\mathbf k'=\mathbf e_1$ when  $\mathbf a_{\mathbf k}\cdot \mathbf e_1\ne 0$, or $\mathbf k'=\mathbf e_2$ when  $\mathbf a_{\mathbf k}\cdot \mathbf e_2\ne 0$.

\item Case $\mathbf k$ is not parallel with $\mathbf e_1,\mathbf e_2,\mathbf e_3$. 
Then either $\mathbf a_{\mathbf k}\cdot \mathbf e_1\ne 0$, or $\mathbf a_{\mathbf k}\cdot \mathbf e_2\ne 0$, or $\mathbf a_{\mathbf k}\cdot \mathbf e_3\ne 0$.
Then choose $\mathbf k'=\mathbf e_1$, or  $\mathbf k'=\mathbf e_2$ or $\mathbf k'=\mathbf e_3$, correspondingly.
\end{itemize}

Since $|\mathbf k'|=1$, we have, same as in the 2D case, $\mathcal H$ covers $\mathbf k$, $\mathbf k'$ and $\mathbf k+\mathbf k'$.
Then the coefficient of $E_{\mathbf k+\mathbf k'}(\bx)$ in  $\Bs(v_0,w)$ is not zero.
By \eqref{reduce}, we have $\Bs(v,w)\ne 0$.
\end{proof}

With the expansions in \eqref{vngn}, we have $g_n\to g=h_0$ and $v_n\to v=w_0$. If $v=0$, then by \eqref{h0},  we must have $g=0$. However, as we will see in the next result, with a different design of expansions, it is possible that $v_n\to 0$ but $g_n\to g\ne 0$.

\begin{thm}\label{vzthm2}
Suppose there exists $u\in \mathcal H$ such that 
\begin{equation}\label{Bnz}
B(u,u)\ne 0   . 
\end{equation} 

For any $M>0$, there are $g_n\in \mathcal H$ and solutions $v_n$ of \eqref{eqforce}, for all $n\ge 1$,  such that  $g_n\to g$ and $v_n\to 0$, as $n\to\infty$, with $|g|=M$.  
\end{thm}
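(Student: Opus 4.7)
\medskip
\noindent\textbf{Proof proposal.} The plan is to exploit the freedom in choosing the force: once $v_n$ is chosen, $g_n$ is determined by \eqref{eqforce}, so the only task is to select $v_n \to 0$ for which the nonlinear contribution $\alpha_n B(v_n,v_n)$ is arranged to converge to a prescribed nonzero vector. Since $Av_n$ will automatically tend to $0$, the limit $g$ will come entirely from the bilinear term.

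Concretely, I would set $c = M/|B(u,u)|$, which is a well-defined positive number thanks to hypothesis \eqref{Bnz}. Then I would define
\begin{equation*}
\Gamma_n = \sqrt{c/\alpha_n}, \qquad v_n = \Gamma_n u, \qquad g_n := Av_n + \alpha_n B(v_n,v_n),
\end{equation*}
so that $v_n$ is trivially a solution of \eqref{eqforce}. A direct computation using bilinearity gives
\begin{equation*}
g_n = \Gamma_n Au + \alpha_n \Gamma_n^2 B(u,u) = \Gamma_n Au + c\,B(u,u).
\end{equation*}
Since $\alpha_n \to \infty$, we have $\Gamma_n \to 0$, hence $v_n \to 0$ in $\mathcal H$ and $g_n \to g := c\,B(u,u)$ in $\mathcal H$, with $|g| = c\,|B(u,u)| = M$.

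There is essentially no obstacle; the only subtle point worth recording is that this construction is consistent with Theorem \ref{thm1}\ref{p0} (which forces $B(v,v)=0$ with $v=\lim v_n$), because here $v=0$ trivially satisfies $B(0,0)=0$, while the nontrivial target $g$ is produced in the limit through the rescaled quadratic $\alpha_n\Gamma_n^2 B(u,u)$ rather than through $Av$. This illustrates the phenomenon alluded to just before the statement: with an appropriately designed expansion of $g_n$, the solutions $v_n$ may tend to $0$ even though the forces converge to a nonzero limit, in contrast with the fixed-force situation described in \eqref{h0}.
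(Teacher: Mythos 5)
Your construction is correct and is essentially identical to the paper's: with $c=M/|B(u,u)|$ your $v_n=\sqrt{c/\alpha_n}\,u$ and limit force $g=c\,B(u,u)$ coincide (up to renaming $w_1=\sqrt{c}\,u$, $h_1=Aw_1$) with the choice $v_n=\alpha_n^{-1/2}w_1$, $g_n=g+\alpha_n^{-1/2}h_1$ made in the paper's proof. Nothing further is needed.
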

\begin{proof}
Take 
\begin{equation}\label{vnstop}
    v_n=\alpha_n^{-1/2}w_1\text{ and }g_n=g+\alpha_n^{-1/2}h_1,
\end{equation}
where 
$$w_1=\frac{\sqrt{M}}{\sqrt{|B(u,u)|}}u, \quad 
g=B(w_1,w_1),\quad h_1=Aw_1.$$
Direct calculations show that the conclusions of this theorem hold. 
\end{proof}

\begin{eg}\label{Dex} 
Consider the periodic boundary condition. 
Clearly, condition \eqref{Bnz} is not met if $\mathcal H=E[K]$ with all wave vectors in $K$ being parallel to each other.
We give explicit examples below where \eqref{Bnz} does hold.

\medskip
\emph{The 2D case.} Let $\mathbf k=2\mathbf e_1$ and $\mathbf j=\mathbf e_2$. Assume $\mathcal H$ covers $\mathbf k,\mathbf j,\mathbf k+\mathbf j$. Set $u=u_1+u_2$, where
$$u_1=(\mathbf e_3\times \mathbf k)(E_{\mathbf k}(\bx)+E_{-\mathbf k}(\bx))\text{ and }
u_2=(\mathbf e_3\times \mathbf j)(E_{\mathbf j}(\bx)+E_{-\mathbf j}(\bx)).$$
Then $B(u,u)=\Bs(u_{1},u_{2})$. By Lemma \ref{B2Dlem}, $\Bs(u_{1},u_{2})\ne 0$ which implies \eqref{Bnz}.

\medskip
\emph{The 3D case.} Suppose $\mathcal H$ covers $\mathbf e_1$, $\mathbf e_2$ and $\mathbf e_1+\mathbf e_2$.
Let $$u=u_{1}+u_{2},\text{ with }
u_{1}=\mathbf e_2 (E_{\mathbf e_1}(\bx)+E_{-\mathbf e_1}(\bx)),\
u_{2}= \mathbf e_3 (E_{\mathbf e_2}(\bx)+E_{-\mathbf e_2}(\bx)).
$$
Then
$B(u,u)=\Bs(u_{1},u_{2}).$
The coefficient of $E_{\mathbf e_1+\mathbf e_2}(\bx)$ in $(u_1\cdot\nabla) u_2 +(u_2\cdot\nabla )u_1$ is
$$(\mathbf e_2\cdot i\mathbf e_2)\mathbf e_3+ (\mathbf e_3\cdot i\mathbf e_1)\mathbf e_2=i\mathbf e_3.$$
This vector is not parallel with $\mathbf e_1+\mathbf e_2$, hence $\Bs(u_{1},u_{2})\ne 0$, and we have \eqref{Bnz}.
\end{eg}

\begin{obs}\label{gcompare}
Recall from Proposition \ref{v0eg} for the case $g_n\equiv g$ that the expansion of $v_n$ must contain the $w_2$-term. This contrasts with the fact the expansion of $v_n$ in \eqref{vnstop} stops at $w_1$.
\end{obs}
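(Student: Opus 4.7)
The observation is a comparison of two already-established results, so the ``proof'' amounts to verifying both halves and then articulating the structural reason for the contrast. My plan is to revisit the first half by extracting the minimal core of Proposition~\ref{v0eg} that forces $w_2$ into the expansion, verify the second half by direct substitution into \eqref{eqforce}, and then isolate the mechanism that distinguishes the two regimes.

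For the first half, the assertion is that under $g_n\equiv g$ and $v=0$, a one-term expansion $v_n=\Gamma_{1,n}w_1$ cannot solve \eqref{steady}. Substituting into \eqref{steady} yields \eqref{t2}, after which I would run the General Procedure \eqref{GP} on the totally comparable set $\mathcal S$ restricted to $\{\sigma_0,\sigma_{1,1}\}$. The case $\sigma_0\succ\sigma_{1,1}$ is immediately ruled out by passing to the limit in \eqref{t1} (it gives $g=0$). In the case $\sigma_{1,1}\succ\sigma_0$, dividing \eqref{t2} by $\alpha_n\Gamma_{1,n}^2$ forces $B(w_1,w_1)=0$, and reinserting this in \eqref{t2} gives $g=0$. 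The remaining case $\sigma_{1,1}\sim\sigma_0$ needs the refinement $\chi_n=1-\alpha_n\Gamma_{1,n}^2/\lambda_*$ and branches $(\alpha)$--$(\gamma)$: each branch contradicts either $g\ne0$ or $w_1\ne0$ (the last by testing $\lambda_1 Aw_1=\lambda_*B(w_1,w_1)$ against $w_1$ using \eqref{flip}). Thus the tentative truncation is untenable, and $w_2$ must be present.

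For the second half, the expansion \eqref{vnstop} provides explicit witnesses $v_n=\alpha_n^{-1/2}w_1$ and $g_n=g+\alpha_n^{-1/2}h_1$ with $g=B(w_1,w_1)$ and $h_1=Aw_1$. Substituting into \eqref{eqforce} gives
\begin{equation*}
Av_n+\alpha_n B(v_n,v_n)=\alpha_n^{-1/2}Aw_1+B(w_1,w_1)=\alpha_n^{-1/2}h_1+g=g_n,
\end{equation*}
so the one-term truncation already balances the equation identically and no $w_2$ is recruited.

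The structural reason for the discrepancy, which I expect to be the main content worth highlighting, is where the leading $O(\Gamma_{1,n})$ term $\Gamma_{1,n}Aw_1$ on the left-hand side of \eqref{eqforce} is routed. In Proposition~\ref{v0eg}, with $g_n$ fixed, there is nothing on the right-hand side at this order, so the only way to preserve consistency within the procedure \eqref{GP} is to pull a next-order counterbalance from within $v_n$ itself, forcing $w_2$ into the expansion. In Theorem~\ref{vzthm2}, the perturbation $\alpha_n^{-1/2}h_1$ built into $g_n$ sits at exactly this order and absorbs $\alpha_n^{-1/2}Aw_1$, eliminating the need for $w_2$. No real obstacle arises beyond packaging these two verifications; the subtle point is simply that allowing the force to carry its own subleading terms shrinks the asymptotic structure required of the solution.
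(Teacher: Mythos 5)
Your proposal is correct and follows essentially the same route as the paper: the first half is exactly the paper's argument in the proof of Proposition~\ref{v0eg} that the expansion cannot stop at $w_1$ (ruling out $\sigma_0\succ\sigma_{1,1}$ via \eqref{t1}, then Cases A and B with the $\chi_n$ branches), and the second half is the direct substitution already carried out in Theorem~\ref{vzthm2}. Your closing observation---that the perturbation $\alpha_n^{-1/2}h_1$ in $g_n$ absorbs the $O(\Gamma_{1,n})$ term $\Gamma_{1,n}Aw_1$ that would otherwise force a next-order term inside $v_n$---is an accurate articulation of the mechanism the remark is pointing at.
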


\appendix

\section{}\label{ap}

The proof of Lemma \ref{total} needs the following preparations in Lemmas \ref{lemA1}--\ref{lemA3}.

\begin{lem}\label{lemA1}
 For any $x,y\in \mathcal X$, the  set $\{x,y\}$ has a totally comparable subsequential set.

\end{lem}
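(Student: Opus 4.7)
The plan is to reduce the problem to extracting a convergent subsequence of a single real sequence in the compact space $[0,\infty]$. Given $x=(x_n)_{n=1}^\infty$ and $y=(y_n)_{n=1}^\infty$ in $\mathcal X$, the relevant data for comparing them is the sequence of ratios $r_n = x_n/y_n$, which takes values in $(0,\infty)$. By Definition \ref{deford}, the comparison outcomes $\sim$, $\succ$, $\prec$ correspond precisely to the three possibilities $r_n\to\lambda\in(0,\infty)$, $r_n\to\infty$, $r_n\to 0$, respectively.

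First, I would view $(r_n)_{n=1}^\infty$ as a sequence in the compact space $[0,\infty]$ (with the standard two-point compactification topology). By sequential compactness, there exists a subsequence $(\varphi(n))_{n=1}^\infty$ of $(n)_{n=1}^\infty$ such that $r_{\varphi(n)}\to L$ for some $L\in[0,\infty]$. Next, I would split into three cases according to the value of $L$:
\begin{itemize}
\item If $L=\infty$, then $x_{\varphi(n)}/y_{\varphi(n)}\to\infty$, so $(x_{\varphi(n)})_{n=1}^\infty \succ (y_{\varphi(n)})_{n=1}^\infty$.
\item If $L=0$, then $y_{\varphi(n)}/x_{\varphi(n)}\to\infty$ (using $x_n,y_n>0$), so $(y_{\varphi(n)})_{n=1}^\infty \succ (x_{\varphi(n)})_{n=1}^\infty$.
\item If $L\in(0,\infty)$, then $(x_{\varphi(n)})_{n=1}^\infty \sim (y_{\varphi(n)})_{n=1}^\infty$.
\end{itemize}
In each case, the subsequential set $\{x,y\}_\varphi=\{(x_{\varphi(n)})_{n=1}^\infty,(y_{\varphi(n)})_{n=1}^\infty\}$ satisfies the condition in Definition \ref{deford} to be totally comparable.

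This proof is essentially a one-line application of Bolzano--Weierstrass once the right compactification is chosen, and I do not anticipate any significant obstacle. The only minor point requiring attention is ensuring that when $L=0$ or $L=\infty$, the divergence is genuine (not merely eventual vanishing), but this is automatic because the original entries $x_n,y_n$ are strictly positive, so each $r_n\in(0,\infty)$ and the limit statements carry the required meaning. This lemma will then serve as the base case for the diagonal argument in the proof of Lemma \ref{total} for countable $X\subset\mathcal X$.
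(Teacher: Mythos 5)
Your argument is correct and is essentially the paper's own proof: both extract a subsequence along which the ratio $x_n/y_n$ converges in $[0,\infty]$ (the paper phrases this as a bounded/unbounded case split rather than invoking the two-point compactification, but the substance is identical) and then read off $\succ$, $\prec$, or $\sim$ from the limit. No gaps.
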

\begin{proof}
    Suppose $x=(x_n)_{n=1}^\infty$ and $y=(y_n)_{n=1}^\infty$. If $x_n/y_n$ is unbounded, then there is a subsequence $(n_k)_{k=1}^\infty$ of $(n)_{n=1}^\infty$ such that $ (x_{n_k})_{k=1}^\infty\succ (y_{n_k})_{k=1}^\infty$. Otherwise, there is a subsequence $(n_k)_{k=1}^\infty$ such that
    $x_{n_k}/y_{n_k}\to \lambda\in[0,\infty)$ as $k\to\infty$. If $\lambda>0$ then $ (x_{n_k})_{k=1}^\infty\sim (y_{n_k})_{k=1}^\infty$, otherwise
    $ (y_{n_k})_{k=1}^\infty\succ (x_{n_k})_{k=1}^\infty$.
\end{proof}

\begin{lem}\label{lemA2}
    Let $X$ be a totally comparable, finite subset of $\mathcal X$ and $y\in \mathcal X$. Then $X\cup\{y\}$ has a totally comparable subsequential set.
\end{lem}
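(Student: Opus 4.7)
The plan is to iterate Lemma \ref{lemA1} finitely many times, producing a nested chain of subsequences that successively makes $y$ comparable with each element of the finite set $X$, while exploiting the fact that the relations $\succ$ and $\sim$ are preserved under passage to further subsequences (as noted in the paper just before Lemma \ref{total}).

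More concretely, enumerate $X=\{x^{(1)},x^{(2)},\ldots,x^{(m)}\}$. I would construct a nested sequence of index maps $\varphi_1,\varphi_2,\ldots,\varphi_m$, where each $\varphi_{j+1}$ is a subsequence of $\varphi_j$, by applying Lemma \ref{lemA1} at step $j$ to the pair $\{(y_{\varphi_{j-1}(n)})_{n=1}^\infty,(x^{(j)}_{\varphi_{j-1}(n)})_{n=1}^\infty\}$, with the convention $\varphi_0=\mathrm{id}$. The outcome of step $j$ is a subsequential refinement $\varphi_j$ along which the (refined) versions of $y$ and $x^{(j)}$ are in one of the three relations $\sim$, $\succ$, or $\prec$.

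The key observation is that once $y$ and $x^{(j)}$ are comparable along $\varphi_j$, they remain comparable along every further refinement $\varphi_{j+1},\ldots,\varphi_m$, since $\xi_n/\eta_n\to\lambda\in(0,\infty)$ (resp.\ $\to\infty$) along a set passes to any subsequence of that set. Therefore, setting $\varphi=\varphi_m$, the subsequential set $(X\cup\{y\})_\varphi$ has the property that $y_\varphi$ is comparable with every $x^{(j)}_\varphi$, and the mutual comparisons among the $x^{(j)}_\varphi$'s are inherited from the total comparability of $X$ via the same subsequence-stability principle. This yields total comparability of $(X\cup\{y\})_\varphi$.

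The argument involves no real obstacle: the only thing one has to be careful about is the stability of $\succ$ and $\sim$ under subsequences, which is immediate from their definition in terms of limits of ratios; the finiteness of $X$ then guarantees the induction terminates after $m$ steps and produces an honest single subsequence $\varphi$.
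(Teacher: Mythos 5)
Your proof is correct and follows essentially the same route as the paper, which simply performs a finite induction on the size of $X$ using Lemma \ref{lemA1} at each step; your explicit nested-subsequence construction and the remark on stability of $\succ$ and $\sim$ under subsequences just spell out the details the paper leaves implicit.
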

\begin{proof}
By finite induction on the size of $X$ and the use of Lemma \ref{lemA1}.    
\end{proof}

\begin{lem}\label{lemA3}
    Let $X$ be a finite, non-empty subset of $\mathcal X$. Then it has a totally comparable subsequential set.
\end{lem}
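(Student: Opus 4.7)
The plan is to prove Lemma \ref{lemA3} by finite induction on the cardinality $|X|$, reducing the problem to the already-established Lemma \ref{lemA2}.

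For the base case $|X|=1$, write $X=\{x\}$. Since $x\sim x$ (because $x_n/x_n \equiv 1$), the singleton $X$ is trivially totally comparable, so taking the identity subsequence $\varphi(n)=n$ gives a totally comparable subsequential set $X_\varphi = X$.

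For the inductive step, assume the conclusion holds for every finite subset of $\mathcal X$ of size $k$, and let $X\subset \mathcal X$ have size $k+1$. Pick any $y\in X$ and let $Y=X\setminus\{y\}$, which has size $k$. By the inductive hypothesis, there is a subsequence $(\varphi_1(n))_{n=1}^\infty$ of $(n)_{n=1}^\infty$ such that $Y_{\varphi_1}$ is totally comparable. Let $\tilde y = (y_{\varphi_1(n)})_{n=1}^\infty \in \mathcal X$. By Lemma \ref{lemA2} applied to the totally comparable finite set $Y_{\varphi_1}$ and the element $\tilde y$, there is a subsequence $(\psi(n))_{n=1}^\infty$ of $(n)_{n=1}^\infty$ such that $(Y_{\varphi_1}\cup\{\tilde y\})_\psi$ is totally comparable.

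To conclude, set $\varphi(n)=\varphi_1(\psi(n))$, which is a subsequence of $(n)_{n=1}^\infty$. Since passing to a further subsequence preserves the relations $\sim$ and $\succ$ (as noted in the discussion preceding Lemma \ref{total}), the set $X_\varphi$ equals $(Y_{\varphi_1}\cup\{\tilde y\})_\psi$ and is therefore totally comparable. This completes the induction, and hence the proof.

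The main (and only) substantive step is invoking Lemma \ref{lemA2}; the rest is bookkeeping about composing subsequences and observing that total comparability is preserved under passage to subsequences. There is no real obstacle here because the finiteness of $X$ lets the induction terminate in finitely many steps, sidestepping the diagonal-extraction issue that arises in Lemma \ref{total} for countable $X$.
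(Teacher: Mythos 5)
Your proof is correct and follows essentially the same route as the paper: a finite induction that adds one sequence at a time via Lemma \ref{lemA2}, composing the nested subsequences (the paper builds up $Y_1\subset Y_2\subset\cdots\subset Y_N$ iteratively, which is the same argument in iterative rather than inductive form). No gaps; the bookkeeping identity $X_\varphi=(Y_{\varphi_1}\cup\{\tilde y\})_\psi$ with $\varphi=\varphi_1\circ\psi$ is exactly what is needed.
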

\begin{proof}
Suppose $X=\{x_1,x_2,\ldots,x_N\}$ with $x_k=(x_{k,n})_{n=1}^\infty$. For $1\le k\le N$, let $Y_k=\{x_1,x_2,\ldots,x_k\}$.
Then $Y_{k+1}=Y_k\cup \{x_{k+1}\}$ for $1\le k\le N-1$. Applying Lemma \ref{lemA2} repeatedly, we obtain subsequnces $(\varphi_k(n))_{n=1}^\infty$ such that $\varphi_1(n)=n$, $(\varphi_{k+1}(n))_{n=1}^\infty$ is a subsequence of $(\varphi_k(n))_{n=1}^\infty$, and $(Y_{k+1})_{\varphi_k}=(Y_k)_{\varphi_k}\cup \{(x_{k+1,\varphi_k(n)})_{n=1}^\infty\}$ has a totally comparable subsequential set $(Y_{k+1})_{\varphi_{k+1}}$.
Therefore, $X_{\varphi_N}=(Y_N)_{\varphi_N}$ is totally comparable.
\end{proof}

\begin{proof}[Proof of Lemma \ref{total}]
The case $X$ is finite was already proved in Lemma \ref{lemA3}. Consider $X$ is infinite now.
We proceed as in the proof of Lemma \ref{lemA3}. Suppose $X=\{x_k=(x_{k,n})_{n=1}^\infty:k\ge 1\}$ and let $Y_k=\{x_1,x_2,\ldots,x_k\}$ for $k\ge 1$. Then there exist subsequnces $(\varphi_k(n))_{n=1}^\infty$ of $(n)_{n=1}^\infty$, for all $k\ge 1$, such that  $(\varphi_{k+1}(n))_{n=1}^\infty$ is a subsequence of $(\varphi_k(n))_{n=1}^\infty$, and each $(Y_{k})_{\varphi_k}$ is totally comparable.
By setting $\varphi(n)=\varphi_n(n)$, we have $X_\varphi$ is a subsequential set of $X$ and it is totally comparable.
\end{proof}

For $X,Y\subset \mathcal X$, we say $X\succ Y$ if $x\succ y$ for all $x\in X$ and $y\in Y$.
Its negation is denoted by $X\not \succ Y$.

\begin{proof}[Proof of the second inequality of \eqref{ojk} in Theorem \ref{thm1}, part \ref{p4}]
Let $R$ be the first row in \eqref{seqarray}, that is,
 $R=\{\sigma_m:m\ge 0 \}$.
We state a refined version, namely, for any $j\ge 1$ and $k\ge j$,
\begin{equation}\label{neword}
    \ord(\sigma_{j,k})\le \begin{cases}
        \omega\cdot (2j)&\text{ if } R\not\succ \{\sigma_{j,k}\},\\
        \omega\cdot (2j)+ (k-j)(k-j+1)/2  &\text{ if } R\succ \{\sigma_{j,k}\}.         
    \end{cases}
\end{equation}

Below, we prove \eqref{neword} by induction in $j$.

\emph{Step 1. Consider $j=1$.} We will prove, for all $k\ge 1$, that
\begin{equation}\label{s1}
    \ord(\sigma_{1,k})\le \begin{cases}
        \omega+\omega&\text{ if } R\not\succ \{\sigma_{1,k}\},\\
        \omega+\omega + (k-1)k/2  &\text{ if } R\succ \{\sigma_{1,k}\}.         
    \end{cases}
\end{equation}

Given $k\ge 1$. We need to compare $\sigma_{1,k}$ with $R$.

\medskip\noindent
\emph{Case 1. $R \not \succ \{\sigma_{1,k}\}$, that is, there is $m$ such that $ \sigma_{1,k} \succsim \sigma_m $.} Suppose
\begin{equation}\label{ns1}
\ord(\sigma_{1,k}) > \omega+\omega.
\end{equation}

Let $T$ be the triangle of sequences in \eqref{seqarray} with vertices $\sigma_{1,1}$, $\sigma_{1,k-1}$, and $\sigma_{k-1,k-1}$. Explicitly,
    $T=\{\sigma_{j,p}:1\le j\le p\le k-1\}.$

Thanks to part \ref{p3}, we observe that the possible entries $\sigma$ that satisfy $\sigma\succ \sigma_{1,k}$ and $\ord(\sigma)\ge \omega$ are $\sigma_0,\sigma_1,\ldots,\sigma_{m-1}$ along with the triangle $T$.  
From this fact, property \eqref{orule} applied to $\sigma_*=\sigma_{1,k}$ and \eqref{ns1},  we must have
\begin{equation}\label{ordrel}
\begin{aligned}
&\ord(\{\sigma_0,\ldots,\sigma_{m-1} \}\cup T)
\supset \{\text{ordinal number } \zeta: \omega \le \zeta<\ord(\sigma_{1,k})\}\\
&\supset \{\text{ordinal number } \zeta: \omega \le \zeta\le \omega+\omega\}
= \{\omega,\omega+1,\ldots,\omega+\omega\}.
\end{aligned}
\end{equation}

Since the first set is finite and the last set is infinite, the inclusion is impossible.

\medskip\noindent
\emph{Case 2. $R\succ \{\sigma_{1,k}\}$, that is,  $\sigma_m\succ \sigma_{1,k} $ for all $m$.}
Assume the contrary of the second inequality in \eqref{s1}, that is,
\begin{equation}\label{ns2}
\ord(\sigma_{1,k}) > \omega+\omega+(k-1)k/2.
\end{equation}

In this case, the possible entries $\sigma$ that satisfy $\sigma\succ \sigma_{1,k}$ and $\ord(\sigma)\ge \omega$ are those in the row $R$ and the triangle $T$.
Combining this fact with \eqref{orule} applied to $\sigma_*=\sigma_{1,k}$ and \eqref{ns2},  we must have, similar to \eqref{ordrel}, that
\begin{equation}\label{ST}
\ord(R\cup T) \supset \{\omega,\omega+1,\ldots,\omega+\omega,\omega+\omega+1,\omega+\omega+2,\ldots,\omega+\omega+(k-1)k/2\}.
\end{equation}

We study the order in $R\cup T$ next.
We compare $R$ and $T$.
Decompose $T=T_1\cup T_2$, where 
$$T_2=\{\sigma\in T: \sigma_m\succ\sigma\text{ for all } m\}
\text{ and }T_1=T \setminus T_2.
$$

Note that all three sets $T,T_1,T_2$ are finite and $R\succ T_2$, which clearly implies $R\cap T_2=\emptyset$.

For $\sigma\in T_1$, there is the smallest number $m(\sigma)$ such that $\sigma\succsim \sigma_{m(\sigma)}$.
Let $$m_*=\max\{m(\sigma):\sigma\in T_1\}.$$
Then for $\sigma\in T_1$ we have $\sigma\succsim  \sigma_{m(\sigma)}\succsim  \sigma_{m_*}$.
The set $R\cup T_1$ is partitioned into $R_1\cup R_2$ with
\begin{equation}\label{RR}
R_1=\{\sigma_0,\ldots, \sigma_{m_*}\}\cup T_1,
\quad 
R_2=\{\sigma_{m_*+p}:p\ge 1\},
\end{equation}

We have $R_1\succ R_2\succ T_2$ and $R\cup T=R_1\cup R_2\cup T_2$.

From \eqref{ST} and \eqref{RR}, the finite set $R_1$ can only cover the assigned ordinal numbers going from $\omega$ to $\omega+M$ with some  finite number $M$. 
The infinite set $R_2$ will give ordinal numbers $$\omega+M+1, \omega+M+2, \omega+M+3, \ldots, \text{ i.e., }\omega+M+p, \text{ for all integers }p\ge 1.$$
Thus, the finite set $T_2$ has assigned ordinal numbers $\omega+\omega, \omega+\omega+1,\ldots, \omega+\omega+K$, for some nonnegative integer $K$. Therefore, the maximum of $\ord(R\cup T)$ is $\omega+\omega+K$.
Note that $K+1$ cannot exceed the number of elements in $T_2$, which cannot exceed the number of elements in $T$ 
which is $(k-1)k/2$. Thus, $K<(k-1)k/2$, which gives a contradiction to \eqref{ST}.
Therefore, we obtain the second inequality in \eqref{s1}.

\medskip
From both cases 1 and 2 above, one has that inequality \eqref{s1} holds.

\emph{Induction step.} Let $j\ge 2$. Assume, for all $1\le j'\le j-1$ and $k'\ge  j'$,  that
\begin{equation}\label{sprime}
    \ord(\sigma_{j',k'})\le \begin{cases}
        \omega\cdot (2j')&\text{ if } R\not\succ \{\sigma_{j',k'}\},\\
        \omega\cdot (2j')+ (k'-j')(k'-j'+1)/2  &\text{ if } R\succ \{\sigma_{j',k'}\}.          
    \end{cases}
\end{equation}

As a consequence of hypothesis \eqref{sprime}, we have 
\begin{equation}\label{rough}
     \ord(\sigma_{j',k'})<\omega\cdot (2(j-1))+\omega=\omega\cdot(2j-1) \text{ for all $1\le j'\le j-1$ and $k'\ge  j'$.}
\end{equation}
That is all elements $\sigma$ on the rows starting with $\sigma_{j',j'}$ for $1\le j'\le j-1$ have $\ord(\sigma) <\omega\cdot(2j-1)$.

Given $k\ge j$. Let $\widehat T$ be the triangle in \eqref{seqarray} with vertices $\sigma_{j,j}$, $\sigma_{j,k-1}$, $\sigma_{k-1,k-1}$.

\medskip\noindent
\emph{Case 1. $R\not\succ \{\sigma_{j,k}\}$.} Then  
\begin{equation}\label{rough4}
 \sigma_{j,k} \succsim \sigma_m \text{ for some $m\ge 0$.}    
\end{equation}

Suppose the first inequality in \eqref{neword} fails, i.e., $\ord(\sigma_{j,k})>\omega\cdot(2j)$.

Combining this assumption with part \ref{p3}, \eqref{rough}, \eqref{rough4},  and \eqref{orule} applied to $\sigma_*=\sigma_{j,k}$ give 
$$\ord(\{\sigma_0,\ldots,\sigma_{m-1}\}\cup \widehat T)  \supset \{\omega\cdot(2j-1),\omega\cdot(2j-1)+1,\omega\cdot(2j-1)+2,\ldots,\omega\cdot(2j)\}.$$

Since the first set is finite and the second set is infinite, we have a contradiction.
Thus, the first inequality in \eqref{neword} holds.

\medskip\noindent
\emph{Case 2. $R\succ\{\sigma_{j,k}\}$.} One has, for all $m$, $\sigma_m\succ \sigma_{j,k} $. 
Assume the second inequality in \eqref{neword} is false, that is,
\begin{equation}\label{nsj}
\ord(\sigma_{j,k}) > \omega\cdot (2j)+ (k-j)(k-j+1)/2.
\end{equation}

Similar to \eqref{ST}, by using part \ref{p3}, \eqref{rough}, \eqref{nsj},  and \eqref{orule} applied to $\sigma_*=\sigma_{j,k}$ we have
\begin{equation}\label{ST2}
\begin{aligned}
\ord(R\cup \widehat T) \supset
&\{ \omega\cdot(2j-1),\omega\cdot(2j-1)+1,\omega\cdot(2j-1)+2,\ldots,\\
&\quad  \omega\cdot(2j),\omega\cdot(2j)+1,\omega\cdot(2j)+2,\ldots,
\\&\quad  \omega\cdot(2j)+(k-j)(k-j+1)/2\}.    
\end{aligned}
\end{equation}

We proceed as in Case 2 of Step 1 after \eqref{ST} with $\widehat T$ replacing $T$. Then the maximum of $\ord(R\cup \widehat T)$ is 
$$\omega\cdot(2j-1)+\omega+K=\omega\cdot(2j)+K,$$
where integer $K$ is smaller than the cardinality of $\widehat T$. 
This implies $K<(k-1)(k-j+1)/2$, which contradicts \eqref{ST2}. Thus the second inequality of \eqref{neword} holds. This completes the proof of \eqref{neword} for the induction step.

\medskip
\emph{Conclusion.}
By the Induction Principle (in $j$), \eqref{neword} holds for all $j\ge 1$ and $k\ge j$. Hence, we obtain the second inequality  in \eqref{ojk} for all $k\ge j\ge 1$. 
\end{proof}

\begin{lem}\label{concat}
Let $X$ be a countable, totally comparable subset of $\mathcal X$ and $y=(y_n)_{n=1}^\infty$ be a sequence of real numbers.
Then there exists a subsequence $(\varphi(n))_{n=1}^\infty$ of $(n)_{n=1}^\infty$ such that 

{\rm (i)} $y_{\varphi(n)}=0$ for all $n$, or

{\rm (ii)} $y_{\varphi(n)}>0$ for all $n$ and $(X\cup \{y\})_\varphi$ is totally comparable, or

{\rm (iii)} $y_{\varphi(n)}<0$ for all $n$ and $(X\cup \{-y\})_\varphi$ is totally comparable.
\end{lem}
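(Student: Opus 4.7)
\textbf{Proof proposal for Lemma \ref{concat}.}

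The plan is to separate the sign issue from the comparability issue and then invoke Lemma \ref{total}. By the pigeonhole principle applied to the partition $\mathbb N = \{n:y_n=0\}\cup\{n:y_n>0\}\cup\{n:y_n<0\}$, at least one of these three sets is infinite. If $\{n:y_n=0\}$ is infinite, let $\varphi$ enumerate it in increasing order; then $y_{\varphi(n)}=0$ for all $n$, giving case (i) and we are done. Otherwise, at least one of $\{n:y_n>0\}$ or $\{n:y_n<0\}$ is infinite; let $\psi$ enumerate this set in increasing order, so that either $y_{\psi(n)}>0$ for all $n$, or $y_{\psi(n)}<0$ for all $n$.

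Define the positive sequence
\[
z=(z_n)_{n=1}^\infty\in\mathcal X, \qquad z_n=\begin{cases}y_{\psi(n)}&\text{if }y_{\psi(n)}>0,\\ -y_{\psi(n)}&\text{if }y_{\psi(n)}<0.\end{cases}
\]
Since total comparability is preserved under any subsequential set (as observed just before Lemma \ref{total}), the set $X_\psi$ is totally comparable, and hence $X_\psi\cup\{z\}$ is a \emph{countable} subset of $\mathcal X$. Now I invoke Lemma \ref{total}: there is a subsequence $\varphi'$ of $(n)_{n=1}^\infty$ such that $(X_\psi\cup\{z\})_{\varphi'}$ is totally comparable.

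Set $\varphi=\psi\circ\varphi'$; this is a subsequence of $(n)_{n=1}^\infty$. For any $x=(x_n)_{n=1}^\infty\in X$,
\[
(x_{\varphi(n)})_{n=1}^\infty=(x_{\psi(\varphi'(n))})_{n=1}^\infty=\bigl((x_{\psi(\cdot)})_{\varphi'(n)}\bigr)_{n=1}^\infty,
\]
and similarly $(z_{\varphi'(n)})_{n=1}^\infty=(y_{\varphi(n)})_{n=1}^\infty$ in the positive case, or $=(-y_{\varphi(n)})_{n=1}^\infty$ in the negative case. Therefore $(X\cup\{y\})_\varphi=(X_\psi\cup\{z\})_{\varphi'}$ in case (ii), and $(X\cup\{-y\})_\varphi=(X_\psi\cup\{z\})_{\varphi'}$ in case (iii), each of which is totally comparable by the choice of $\varphi'$. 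In particular $y_{\varphi(n)}>0$ for all $n$ in case (ii), and $y_{\varphi(n)}<0$ for all $n$ in case (iii).

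The argument is essentially bookkeeping once the right ingredients are available: the only nontrivial step is the application of Lemma \ref{total} to a countable subset of $\mathcal X$, and the minor subtlety to check is that the composition $\psi\circ\varphi'$ does yield a genuine subsequence of the original $(n)_{n=1}^\infty$ and that the resulting $(X\cup\{\pm y\})_\varphi$ coincides with the totally comparable set produced by Lemma \ref{total}. No further obstacle is expected.
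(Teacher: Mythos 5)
Your proposal is correct and follows essentially the same route as the paper: split indices by the sign of $y_n$, pass to an infinite monotone-sign (or zero) subset, and apply Lemma \ref{total} to the countable set obtained by adjoining the positive sequence $y_\psi$ (or $-y_\psi$) to $X_\psi$. The extra bookkeeping you include (that $\psi\circ\varphi'$ is a genuine subsequence and that $(X\cup\{\pm y\})_{\psi\circ\varphi'}=(X_\psi\cup\{z\})_{\varphi'}$) is exactly what the paper leaves implicit, and it is handled correctly.
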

\begin{proof}
Let $I_0$, respectively, $I_+$, $I_-$ be the set of indices $n$ such that $y_n=0$,  respectively, $y_n>0$, $y_n<0$.
If $I_0$ is infinite then (i) holds. If $I_+$ is infinite, then there is a subsequence  $(\psi(n))_{n=1}^\infty$ of $(n)_{n=1}^\infty$ such that $y_{\psi(n)}>0$ for all $n$. Then apply Lemma \ref{total} to $(X\cup\{y\})_\psi$ we obtain (ii).
When $I_-$ is infinite, we apply the argument in (ii) to $X$ and the sequence $(-y)$, hence, obtain (iii).
\end{proof}

\bibliographystyle{plain}
\bibliography{FHJ-main.bib}

\end{document}